\newcommand{\ds}{\displaystyle}
\newcommand{\R}{{\mathbb R}}
\newcommand{\reals}{\mathbb{R}}
\newcommand{\realstwo}{\mathbb{R}^2}
\newcommand{\realsthree}{\mathbb{R}^3}
\newcommand{\xb}{{\bf{x}}}
\newcommand{\Dn}{\partial_{\nu}}
\newcommand{\Dz}{\partial_z}
\newcommand{\cF}{{\mathcal{F}}}
\newcommand{\bH}{\mathbf{H}}
\newcommand{\Om}{\Omega}
\newcommand{\s}{\sigma}
\newcommand{\e}{\epsilon}
\newcommand{\Ez}{E_{z}}
\newcommand{\bA}{\mathbf{A}}
\newcommand{\cD}{\mathscr{D}}
\newcommand{\lb}{\langle}
\newcommand{\rb}{\rangle}
\newcommand{\bT}{\mathbb T}
\theoremstyle{plain}
\newtheorem{condition}{Condition}
\newtheorem{conjecture}{Conjecture}
\newtheorem{theorem}{Theorem}[section]
\newtheorem{lemma}[theorem]{Lemma}
\newtheorem{corollary}[theorem]{Corollary}
\newtheorem{remark}[theorem]{Remark}
\theoremstyle{definition}
\newtheorem{definition}[theorem]{Definition}
\numberwithin{equation}{section}
\numberwithin{theorem}{section}
\title{Attractors and Determining Functionals for A Flutter Model: \\ Finite Dimensionality Out of Thin Air }
\date{\today}
 \author{ 
            Justin T. Webster \\
 University of Maryland, Baltimore County \\
 \it Baltimore, MD \\
  \it websterj@umbc.edu}
\begin{document}

\maketitle

\quad\\[-0.5cm]

\begin{abstract}
We establish the effective {\em finite dimensionality} of the dynamics corresponding to a flow-plate interaction PDE model arising in aeroelasticity: a  nonlinear panel, in the absence of rotational inertia, immersed in an inviscid potential flow.  An intrinsic component of the analysis  is  the study of a plate equation with a {\it delay} term---a fundamentally non-gradient dynamics. 
First, we construct a compact global attractor and observe that the attractor is smooth, with finite fractal dimension in the state space. Secondly, by fattening the attractor, we obtain an exponential attractor, though with finite dimension only in an extended space. Lastly, we show that a finite set of {\em determining functionals} exists by considering the {\em completeness defect} for some practical functionals on $H_0^2(\Omega)$ (e.g., nodes, modes, and averages). The primary tool here is the recent quasi-stability theory of Chueshov and Lasiecka. All of the main results require {\em no imposed structural damping}, as dissipative effects are contributed by the flow through the coupling. In the final section, we discuss additional results and conjectures when imposed structural damping is present. 
\end{abstract}\quad\\


\section{Introduction}
In this paper we consider a canonical fluid-structure interaction: the flow of gas over a lower dimensional surface. Specifically, we analyze the coupled partial differential equation (PDE) model of an inviscid potential flow over a clamped plate; we refer to the physical configuration as a {\em panel}. As we describe below, it is well-known that the presence of the 3-D gas flow can be destabilizing for the 2-D elastic plate embedded in the flow domain's boundary. Indeed, {\em aeroelastic flutter} is a particular type of flow-induced destabilization resulting from systemic bifurcation, and it often yields sustained {\em limit cycle oscillations} (LCOs) in the structure. From a mathematical point of view, the model represents a hyperbolic-like flow equation strongly coupled to a nonlinear plate equation (coming from the theory of large deflections \cite{springer,ciarlet,lagnese}). The model presented below is established in the engineering literature \cite{dowell,dowellnon,dowell1}, and has also been well-studied in the applied PDE literature, predominantly by Chueshov et al. \cite{LBC96,b-c-1,oldchueshov1,chuey,springer,ryz,ryz2,oldchueshov2}. These aforementioned studies (all before 2010) have typically involved the addition of regularizing and/or imposed damping mechanisms; as we describe below, this is a quite natural first step in the analysis. After 2010, with the involvement of the present author and the development of Chueshov and Lasiecka's theory of {\em quasi-stability} (Section \ref{quasisec}), a variety of new results and approaches appeared \cite{jadea12,delay,supersonic,webster,conequil1,conequil2}. The surveys \cite{fereisel} and \cite{survey2} provide a mathematical discussion of previous results, while \cite{survey1} provides a discussion of the connections between engineering and mathematical analyses of this and related models.

The principal purpose of this paper is to rigorously explore notions of asymptotic {\em finite dimensionality} for the flow-plate model in the absence of mathematically helpful terms (i.e., as it is given in classical engineering references \cite{bolotin,dowellnon}). That the model below is well-posed is established \cite{webster,jadea12,supersonic}---see \cite{survey2}. Here, we ask after the qualitative properties of the dynamics in the non-transient regime---including the possibility of unstable, post-flutter type behaviors. This includes, for instance, the possibility of chaotic dynamics or convergence to equilibrium, in addition to LCO behaviors. We show in the work at hand that the non-transient regime is truly finite dimensional. 

It is well-established in the engineering literature that flutter is a {\em low dimensional} phenomenon \cite{dowell,dowell1,vedeneev,vedeneev2}. This is to say that engineers only utilize small numbers of ``modes" to describe the asymptotic-in-time behaviors of the flow-plate system, and justify this empirically, in an a posteriori fashion. In line with the above discussion, we rigorously examine this claim for a specific flutter model. This is to say that we begin from the fully infinite dimensional PDE flow-plate system {\em as it appears in the engineering literature}, and we rigorously demonstrate (in a variety of ways) that the essential, long-time dynamics are {\em finite dimensional}. In particular, this will be done with attracting sets and sets of determining functionals, described precisely below in Section \ref{tools}. 

We also take the opportunity to point out a classic pair of mathematically-oriented papers, motivating much of what is here: \cite{Holmes,HolMar78}. These papers study a 1-D, simplified version of the structural model given here in \eqref{reducedplate} (with $q\equiv0$); the former, \cite{Holmes}, makes an a priori truncation of the PDE system and studies the dynamical system properties of the low dimensional systems (as is common in engineering \cite{vedeneev,vedeneev2,dowellchaos}, for instance). The sequel, \cite{HolMar78}, studies attractors and inertial sets (with the available technology of the time), giving a rigorous justification that the model can be studied from a finite dimensional (albeit with $N$ large) point of view---a sort of vindication of the earlier work.

\subsection{Goals of the Paper}

With the above established, we assert that the {\bf main goals} of this paper are: 
{\bf (i)} To demonstrate quantitatively what engineers often state qualitatively---that flutter is a finite dimensional phenomenon. 
{\bf (ii)} To establish a robust set of results concerning the asymptotic-in-time behavior of the flow-plate system in the absence of mathematically helpful terms (imposed damping or regularizations) ; some of these are novel, and some that are not have novel proofs here.
{\bf (iii)} To showcase the power and ease of applicability of the recent quasi-stability theory of Chueshov and Lasiecka \cite{springer,quasi} as the main tool for most of our principal results here.

Below, we provide a complete exposition of the steps between the full flow-structure model to various notions of finite dimensional end behavior. This ``reduction" is accomplished {\em without any imposed damping} whatsoever in the model. We will prove the main points, and, when we omit proofs, we provide explicit references and discussion of the underlying theory. Also, some results/proofs herein are not novel, but are included for self-containedness and to show the reader {\em precisely how finite dimensionality cuts in} in as many ways as possible. 

\subsection{Flow-plate Interactions in Application}
	The interactive dynamics between a fluid flow and a solid embedded in a lower dimensional interface has been a topic of immense activity for 50 years \cite{dowell1,survey1,survey2,fereisel} (and many references therein). Theoretical, numerical, and experimental scientists are interested in characterizing, predicting, and controlling flow-structure behavior. Here, we consider model that specifically captures aeroelastic {\bf flutter} \cite{bolotin,dowell1,dowellnon,vedeneev,vedeneev2}:  flutter is a self-excitation instability that occurs through a feedback between displacements of an elastic structure and dynamic pressure changes of a surrounding fluid flow. Certain flow velocities bring about a bifurcation in the structural dynamics \cite{Holmes,HolMar78}---stable dynamics may become oscillatory, in the form of {\em limit cycle oscillations} (LCOs), or even chaotic  \cite{dowellchaos}. Flutter can occur in a multitude of applications, including: buildings and bridges in wind, aircraft structures and paneling, pipes conveying fluid, and even in the human respiratory system---see \cite{survey1}. 
	
With respect to flight, flutter instabilities are of paramount concern in the supersonic and transonic flight regimes; from a design point of view, flutter cannot be overlooked due to its potential effects on the structure due to fatigue and/or large amplitude response. The standard panel flutter system shown in the next section has been utilized in a large body of work, and was originally introduced to describe projectile paneling \cite{bolotin,dowellnon}.  A majority of corresponding scholarly work has been {\em computational} in nature \cite{dowell1,dowell,dowellnon}. Indeed, given the difficulty of modeling and analyzing coupled PDEs at an interface \cite{LT}, theoretical results have been comparatively few. While numerical studies are incredibly important, and provide vital qualitative information, they are based on finite dimensional approximations of continuum models fundamentally described by PDEs. Ad hoc, a priori truncations of infinite dimensional models should be justified in some rigorous sense.

Being dictated by physics, flow-plate interaction models do not typically yield straightforward functional setups. Serious PDE problems include: (i) the mismatch of regularity between dynamics and/or hyperbolic-hyperbolic coupling, (ii) the appearance of ill-defined boundary traces, and/or (iii) time-evolving domains. And, while linear theory is viable to predict the onset of instability \cite{vedeneev}, capturing post-flutter dynamics requires structural nonlinearity \cite{dowellnon,survey2}, and constitutes a challenging analytical task.   With respect to the latter point, consistent with engineering literature \cite{dowellnon,HolMar78}, we employ the theory of large deflections \cite{ciarlet,lagnese}. Beginning with von Karman theory, we invoke the Berger simplification \cite{berger,gw,studyberger,studyberger2}, widely accepted, and often used, for the panel configuration \cite{studyberger2}.

	\subsection{Mathematical Model and Energies}
	
	The classic {\em flutter} model \cite{dowellnon} takes a inviscid, irrotational flow of compressible gas in $\realsthree_+=\{\mathbf x=(x,y,z)~:~ z > 0\}$, with an elastic panel embedded in the flow boundary $\partial \realsthree_+$. The unperturbed flow velocity has magnitude $U \in \mathbb R$ in the  $x$-direction; we have scaled $U=1$ to Mach 1, so $0 \le U <1$ corresponds to subsonic flow. The equilibrium position of the plate is modeled by a bounded domain $\Omega \subset \{\xb: z = 0\}$, with smooth boundary $\partial \Omega = \Gamma$ and associated unit outward normal $\nu=\nu(x,y)$.
 
The scalar function $u: \Omega \times \R_+ \to \reals$ represents the transverse, Lagrangian displacement of the plate in the $z$-direction at $(x,y)$ at the moment $t$. The flow is of potential type, with $\phi:\realsthree_+\times \reals_+ \rightarrow \reals$ the perturbation velocity potential
 \cite{bolotin,dowell1}, so the flow field $\mathbf v$ on $\mathbb R^3_+$ is given by $\mathbf v = U\mathbf e_1+\nabla \phi$.  The strong coupling occurs (i) in the dynamic pressure term $p(\mathbf x,t)=p_0(\mathbf x)+[\phi_t+U\phi_x]\big|_{\Omega}$, which contains the static pressure and the acceleration potential of the flow, and (ii) in the Neumann condition (the {\em downwash}) of the flow; the latter includes the material derivative of the structure which accounts for the Eulerian-to-Lagrangian change of variables \cite{bolotin,dowell1}.
	\begin{equation}\label{flowplate}\begin{cases}
u_{tt}+\Delta^2u+ku_t+f(u)= p_0(\mathbf x)+r_{\Omega}\big[tr(\phi_t+U\phi_x)\big]& \text { in } \Omega\times (0,T),\\
u(t=0)=u_0;~~u_t(t=0)=u_1,\\
u=\Dn u = 0 & \text{ on } \Gamma\times (0,T),\\
(\partial_t+U\partial_x)^2\phi=\Delta \phi & \text { in } \realsthree_+ \times (0,T),\\
\phi(t=0)=\phi_0;~~\phi_t(t=0)=\phi_1,\\
\partial_z \phi = \big[(\partial_t+U\partial_x)u \big]_{\text{ext}} & \text{ on } \realstwo_{(x,y)} \times (0,T).
\end{cases}
\end{equation}
The notation $tr(\cdot)$ corresponds to the trace operator $H^1(\mathbb R^3_+) \mapsto L^2(\mathbb R^2)$, while $r_{\Omega}:L^2(\mathbb R^2) \to L^2(\Omega)$ corresponds to the restriction to $\Omega$, with corresponding extension by zero. For functions in $H_0^2(\Omega)$ we denote that extension by $u_{\text{ext}} \in H^2(\mathbb R^2)$ (this action is regularity preserving in this configruation). 
\begin{remark}
It is immediately obvious from \eqref{flowplate} that, if~$\phi_t \in L^2(\mathbb R_+^3)$ only, the dynamic pressure $p(\mathbf x,t) = p_0+r_{\Omega}tr(\phi_t+U\phi_x)$ cannot be interpreted through the standard trace theorem here; as we will see, hidden regularity to interpret this trace will be necessary.
\end{remark}

 The  nonlinearity of principal interest here is that of Berger, of extensible, cubic type \cite{lagnese,ciarlet,dowellnon}\footnote{In the case of beams, this type of nonlinearity is often referred to as Krieger-Woinowsky or even Kirchhoff---see \cite{HTW,HHWW} and \cite{menz2,menz3} for more discussion}: 
\begin{equation}\label{berger}
f(u)=f_B(u) = [b_1 - b_2||\nabla u||^2]\Delta u.
\end{equation}
The parameter $b_1\in \mathbb R $ is a pre-stressing parameter \cite{dowellchaos,studyberger}, corresponding to equilibrium in-plane forces, while $b_2 > 0$ scales the strength of the nonlinear restoring force, the term itself depending on {\em local stretching}; when $b_2=0$ the model is linear. The parameter $k\ge 0$ corresponds to weak (or viscous) structural damping, and {\em for most of the paper will be taken to be zero}. 

Denoting standard $L^2$ norms on a domain $\mathcal O$ by $||\cdot||_{L^2(\mathcal O)}$, and using $(\cdot,\cdot)_{\mathbb R^3}$ and $\langle \cdot , \cdot \rangle_{\Omega}$ as the inner product notations, the  {\it plate energy} is defined as usual \cite{springer,lagnese}:
\begin{align}\label{plateenergy}
E_{pl}(u) =& \dfrac{1}{2}\big[\|u_t\|_{L^2(\Omega)}^2+ \|\Delta u\|_{L^2(\Omega)}^2 \big] +\Pi(u).
\end{align}
\noindent $\Pi(u)$ is a potential of the nonlinear and nonconservative forces,  given
by
 \begin{equation}\label{Pi}
 \Pi(u)=\Pi_B(u)=\dfrac{b_2}{4}||\nabla u||^4_{L^2(\Omega)}-\dfrac{b_1}{2} ||\nabla u||^2_{L^2(\Omega)}-\langle p_0,u\rangle_{\Omega}.
 \end{equation}
 The natural energies associated with subsonic flow and interactive dynamics are given below:
\begin{align}
 E_{fl}(\phi) =&  ~\dfrac{1}{2}\big[\|\phi_t\|_{L^2(\R^3_+)}^2-U^2\|\partial_x\phi\|_{L^2(\R^3_+)}^2+\|\nabla \phi\|_{L^2(\R^3_+)}^2\big],\label{flowenergy} \\
 E_{int}(u,\phi)=&~2U\lb tr[\phi],u_x\rb_{\Omega},~~0 \le U<1.
\end{align}
 The total (unsigned) energy is then defined to be \begin{equation}\label{totalenergy}
\mathcal E(u(t),\phi(t))=\mathcal E(t) = E_{pl}(u(t))+E_{fl}(\phi(t))+E_{int}(u(t),\phi(t)).
\end{equation}
\noindent {\bf Notation}: From this point onward, we often denote $L^2$ norms with no subscript, with the meaning clear from the context; Sobolev norms on the standard space $H^s(\mathcal O)$ will be written
$||f||_{H^s(\mathcal O)}=||f||_s$, and we identify $||f||_{L^2(\mathcal O)}=||f||_0=||f||,$ with $\mathcal O =\Omega$ or $\mathbb R_+^3$ and the meaning clear from the context.

\begin{remark} It is clear that with $U>1$ the above energy degenerates; we then define a supersonic energy $$E_{fl}^{sup}(\phi(t)) \equiv \dfrac{1}{2}\Big[||\nabla \phi(t)||^2 + ||\phi_t+U\phi_x||^2\Big],~~\text{ with }~E_{int}^{sup} \equiv 0.$$  This modified topological measure of solutions is the correct one for supersonic well-posedness \cite{supersonic}, though this is not critical to our discussions here.
\end{remark}
 We will also need to consider {\em positive energies}, so we define
\begin{align}\label{posen}
\Pi_*(u)=&\dfrac{b_2}{4}||\nabla u||^4,\\
E_*(u) =&  \dfrac{1}{2}[||u_t||^2+||\Delta u||^2]+\Pi_*(u),\\ \mathcal E_*(u,\phi) =& E_*(u)+E_{fl}(\phi).
\end{align}
\noindent According to these norms, the natural energy space for the dynamics $(u,u_t;\phi,\phi_t)$ is then\footnote{$W^1(\mathbb R^3_+)$ is a homogeneous Sobolev space given as the subspace of $L^2_{loc}(\mathbb R_+^3)$ with finite gradient norm as in \eqref{flownorm}.}: 
\begin{equation}\label{energyspace1}
Y = Y_{pl} \times Y_{fl}\equiv \left(H_0^2(\Omega)\times L^2(\Omega)\right)\times\left(W^1(\realsthree_+) \times L^2(\realsthree_+)\right),
\end{equation}
 defined through the norm
\begin{equation}\label{flownorm}
||(u,v;\phi,\psi)||^2_Y = ||\Delta u||^2+||v||^2+||\nabla \phi||^2+||\psi||^2.
\end{equation}
We will also consider a stronger space below:
\begin{equation}\label{energyspace2}
Y_s\equiv \left(H_0^2(\Omega)\times L^2(\Omega)\right)\times\left(H^1(\realsthree_+) \times L^2(\realsthree_+)\right).
\end{equation}


\subsection{Outline of the Remainder of the Paper}
Section \ref{wellp}  provides a discussion of well-posedness and basic notions about the solution semigroup and associated bounds. Section \ref{techresults} gives the main results in this paper, with narrative structure; this section also precisely discusses the previous mathematical work on the model \eqref{flowplate} (and the associated reduced model \eqref{reducedplate}). Section \ref{tools} gives the technical tools needed used in proving the main theorems, including an overview of quasi-stability theory. Section \ref{reducedsection} gives the rigorous reduction of the flow-plate system in \eqref{flowplate} to a delayed plate equation \eqref{reducedplate}. Section \ref{techstuff} establishes the main estimates (observability, absorbing ball, and quasi-stability) supporting the main theorems' proofs. After this supporting work,  Section \ref{proofs} provides the proofs of each of the main theorems, in short subsections. The final section, Section \ref{finale}, describes  conjectures and open problems when structural damping is imposed.

\section{Well-posedness and Fundamental Notions}\label{wellp}

The above flow-plate dynamics, cast in the appropriate framework, are well-posed \cite{LBC96,springer,jadea12,supersonic,webster}. For precise definitions of {\em strong, generalized (semigroup), and weak} solutions consult \cite{jadea12,supersonic,webster}.  The following  result is established in \cite{webster,jadea12} for $0\le U<1$ and in \cite{supersonic} for $U>1$:
\begin{theorem}[Nonlinear Semigroup]
\label{nonlinearsolution} Assume $U \neq 1$, $p_0 \in L^2(\Omega)$. Take $b_1 \in \mathbb R$ with $k \ge 0$ and $b_2 > 0$.  Then for any $~T>0$,  \eqref{flowplate} has a unique strong (resp. generalized, weak) solution on $[0,T]$, denoted by $S_t (y_0)$, for $y_0=(u_0,u_1;\phi_0,\phi_1) \in Y$. In the case of strong solutions, the natural compatibility condition must be in force: $ \partial_z \phi_0 = [u_1+U\partial_xu_{0}]_{\text{ext}}$.
Moreover,  $(S_t, Y)$ and $(S_t,Y_s)$ are dynamical systems.
\end{theorem}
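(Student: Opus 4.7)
The plan is to recast \eqref{flowplate} as a first-order abstract evolution problem $y_t = \mathbb{A}y + F(y)$ on $Y$, where $\mathbb{A}$ encodes the linear coupled dynamics (plate bi-Laplacian together with the flow wave operator and the linear coupling through pressure and downwash) and $F$ collects the Berger term $-f_B(u)$ along with the forcing $p_0$. Local-in-time existence will come from semigroup theory plus a Banach fixed point on the Duhamel formula; global-in-time will come from an a priori energy bound precluding blow-up.

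The first step is linear well-posedness. For $0\le U<1$ the total energy $\mathcal{E}(t)$ in \eqref{totalenergy} is conserved along linear trajectories: test the plate equation by $u_t$ and the flow equation by $\phi_t$, integrate by parts, and observe that the resulting boundary contributions from $r_\Omega \mathrm{tr}(\phi_t+U\phi_x)$ and the Neumann downwash combine with $\frac{d}{dt}E_{int}$ to a perfect telescoping identity. Moreover $\mathcal{E}$ is equivalent to $\|\cdot\|_Y^2$ modulo lower-order terms controlled by Poincar\'e and Young (note that $|E_{int}|\le \epsilon\|\nabla\phi\|^2 + C_\epsilon\|\Delta u\|^2$). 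This identity together with a Galerkin approximation using eigenfunctions of $\Delta^2$ on $\Om$ and a suitable flow basis on $\realsthree_+$ yields a strongly continuous evolution on $Y$; equivalently one verifies that $\mathbb{A}$ generates a $C_0$-semigroup by $m$-dissipativity (up to a bounded perturbation) on a suitably normed $Y$. For $U>1$ the natural energy is indefinite, and one instead works with $E_{fl}^{sup}$ and the corresponding modified $Y$-norm, as indicated in the remark following \eqref{totalenergy}.

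The central subtlety—and what I expect to be the \emph{main obstacle}—is the trace $r_\Om \mathrm{tr}(\phi_t+U\phi_x)$ entering the plate forcing. Since finite-energy trajectories only give $\phi_t\in L^2(\realsthree_+)$, the standard trace theorem does not apply; the remark after \eqref{flowplate} flags exactly this point. To make the coupling meaningful one invokes hidden/sharp regularity for the half-space wave equation with prescribed Neumann data from $H_0^2(\Om)$: a Lopatinski/microlocal analysis (flattening is trivial here since $\partial\realsthree_+$ is already flat) shows that the Neumann-to-Dirichlet map gains the derivative needed to interpret the pressure trace as an element of a distributional space paired with $L^2(\Om)$ over finite time intervals. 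This step is what ultimately justifies the abstract form of $\mathbb{A}$ and the energy identity above; it is also the ingredient that allows variational / weak solutions to be identified with semigroup solutions.

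With the linear theory in place, the Berger nonlinearity $f_B$ is locally Lipschitz from $H_0^2(\Om)$ into $L^2(\Om)$ because $b_2\|\nabla u\|^2\Delta u$ depends smoothly on $\nabla u$ in $L^2$ and multiplies $\Delta u \in L^2$. A standard contraction argument on $C([0,T^*];Y)$ then yields a unique local strong/generalized solution (with generalized solutions constructed as $Y$-limits of strong ones, automatically satisfying the integral variation-of-parameters identity). Global existence follows from the a priori estimate $\mathcal{E}_*(t)\le C(1+\mathcal{E}(0)+\|p_0\|^2)e^{Ct}$: writing $\Pi_B=\Pi_* - \tfrac{b_1}{2}\|\nabla u\|^2-\lb p_0,u\rb_\Om$ and absorbing the nonpositive pieces via $\|\nabla u\|^2\le \epsilon\|\Delta u\|^2+C_\epsilon$, the positive energy $\mathcal{E}_*$ dominates the $Y$-norm, and Gronwall applied to the energy identity rules out finite-time blow-up. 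Semigroup properties ($S_0=\mathrm{id}$, $S_{t+s}=S_tS_s$, strong continuity) and continuous dependence on data then follow from the abstract construction. The argument for $(S_t,Y_s)$ is identical once one notes $H^1(\realsthree_+)\hookrightarrow W^1(\realsthree_+)$ and that the flow equation propagates $H^1$ regularity, using that the Neumann datum $[(\partial_t+U\partial_x)u]_{\mathrm{ext}}$ lies in $C([0,T];H^{1/2}(\realstwo))$ along $Y$-trajectories.
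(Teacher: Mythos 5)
Your overall skeleton (coupled first--order abstract evolution, locally Lipschitz Berger term, fixed point, a priori bounds) is the same as in the sources the paper cites for this theorem (\cite{webster,jadea12,supersonic}), but the step you single out as the ``main obstacle'' is resolved incorrectly, and this is precisely the point the paper is at pains to explain. You justify the pressure trace by asserting that a Lopatinski/microlocal analysis of the half--space Neumann-to-Dirichlet map ``gains the derivative needed,'' and later that the downwash datum $[(\partial_t+U\partial_x)u]_{\text{ext}}$ lies in $C([0,T];H^{1/2}(\realstwo))$ along $Y$--trajectories. Neither is true when $\alpha=0$: along finite-energy trajectories $u_t\in C([0,T];L^2(\Omega))$ only, so the Neumann datum is merely $C([0,T];L^2(\realstwo))$, and by Remark \ref{regularity} such data produce at best $\phi^{**}\in C([0,T];H^{2/3}(\realsthree_+))\cap C^1([0,T];H^{-1/3}(\realsthree_+))$ --- a loss of $1/3$ derivative, since the Lopatinski condition fails for the Neumann wave problem. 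Consequently the decoupled N-to-D route (the one that works for $\alpha>0$ in \cite{b-c-1,LBC96,springer}) cannot be used to define the generator, to close a contraction on $C([0,T^*];Y)$, or to identify weak with semigroup solutions. In \cite{webster,supersonic} the trace regularity \eqref{trace} is an \emph{a posteriori} hidden-regularity property of solutions of the fully coupled problem (obtained by flow multipliers/hyperbolic theory after the semigroup is constructed), and the generation itself is proved by exhibiting maximal dissipativity of the full coupled operator in a carefully chosen inner product (or, in \cite{jadea12}, by a viscosity/absorbing-boundary approximation); the unbounded trace coupling is not a bounded perturbation and cannot be absorbed as you suggest.

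Two further points. First, your global-in-time argument rests on the subsonic energy identity and the equivalence of $\mathcal E$ with $\mathcal E_*$; as written, the absorption $\|\nabla u\|^2\le\epsilon\|\Delta u\|^2+C_\epsilon$ is false by scaling --- the correct absorption uses the superlinearity of $\Pi_*$ as in Lemma \ref{l:epsilon}. More importantly, the theorem claims well-posedness for all $U\neq1$, and for $U>1$ there is no sign-definite energy identity to feed into Gronwall: the supersonic case in \cite{supersonic} requires a change of state variable (working with $\phi_t+U\phi_x$), a genuinely unbounded non-dissipative boundary term, and the hyperbolic trace theory of \cite{miyatake}; dismissing it with ``the corresponding modified $Y$-norm'' leaves the harder half of the statement unproved. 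Second, invariance in $Y_s$ does not need (and cannot use) $H^{1/2}$ boundary data; it follows on finite time intervals from the elementary bound \eqref{oneusing*}.
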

In the subsonic case, {\bf $0 \le U<1$}, more can be said.
\begin{theorem}[Subsonic Flows]\label{subsonictheorem}
In addition to the hypotheses of Theorem \ref{nonlinearsolution}, take $U \in [0,1)$. Then all solutions satisfy the following energy  {\it equality}:
 \begin{equation}\label{eident}\ds {\mathcal{E}}(t)+k\int_s^t ||u_t(\tau)||_{L^2(\Omega)}^2 d\tau= {\mathcal{E}}(s)\end{equation} for $t>s$.
Moreover, there exists a constant $C(||y_0||_{Y})$ such that for all $ t \geq 0 $ we have: 
\begin{equation}\label{stableS}
 \|S_t (y_0)\|_Y \leq C \left(\|y_0\|_{Y}\right).
 \end{equation}
In addition, the  semigroup $S_t$ is locally Lipschitz on $Y$
\begin{equation}\label{lip}
||S_t(y_1) - S_t(y_2) ||_Y \leq C (R,T) ||y_1-y_2||_Y,~~ \forall~ ||y_i||_Y \leq R,~~ t \leq T 
\end{equation}
\end{theorem}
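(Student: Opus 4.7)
The plan is to prove the three claims in sequence---the energy identity \eqref{eident}, the dissipative a priori bound \eqref{stableS}, and the local Lipschitz estimate \eqref{lip}---first at the level of strong solutions (where classical multipliers are rigorously meaningful and the compatibility condition holds), and then extending to generalized and weak solutions by density combined with \eqref{lip}.

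\emph{Energy identity.} Testing the plate equation against $u_t$ in $L^2(\Omega)$: the clamped conditions eliminate all boundary contributions from $\langle \Delta^2 u, u_t\rangle_\Omega$, while the pressure term and the nonlinearity reassemble $\tfrac{d}{dt}\Pi(u)$, yielding $\tfrac{d}{dt}E_{pl}(u) + k\|u_t\|^2 = \langle tr[\phi_t + U\phi_x], u_t\rangle_\Omega$. Testing the flow equation $(\partial_t + U\partial_x)^2\phi = \Delta\phi$ against $\phi_t$ in $L^2(\mathbb R^3_+)$: integration by parts in each coordinate---with decay at infinity in $(x,y)$ killing the $\phi_{tx}\phi_t$ term and producing $-\tfrac{U^2}{2}\tfrac{d}{dt}\|\phi_x\|^2$ from $U^2\phi_{xx}\phi_t$---together with the Neumann condition $\partial_z\phi|_{z=0} = [u_t + Uu_x]_{\text{ext}}$ applied to the $z=0$ boundary residue from $-\langle\Delta\phi, \phi_t\rangle_{\mathbb R^3_+}$ yields $\tfrac{d}{dt}E_{fl}(\phi) = -\langle tr[\phi_t], u_t + Uu_x\rangle_\Omega$. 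Adding these and applying the identity $\langle tr[\phi], u_{xt}\rangle_\Omega = -\langle tr[\phi_x], u_t\rangle_\Omega$ (by integration by parts in $x$ on $\Omega$, with boundary term vanishing since $u|_\Gamma \equiv 0$) to reconstruct $\tfrac{d}{dt}E_{int}$, the residual coupling terms collapse and one reads off $\tfrac{d}{dt}\mathcal{E}(t) + k\|u_t\|^2 = 0$.

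\emph{A priori bound and local Lipschitz.} Since $k \ge 0$, \eqref{eident} yields $\mathcal{E}(t) \le \mathcal{E}(0)$. To convert into a $Y$-bound, complete the square in $\|\nabla u\|^2$ inside $\Pi_B$ and absorb $|\langle p_0, u\rangle|$ by Young and Poincar\'e to obtain $E_{pl}(u) \ge \tfrac12\|u_t\|^2 + c_0\|\Delta u\|^2 - C(p_0,b_1,b_2)$; the subsonic bound $\|\phi_x\| \le \|\nabla\phi\|$ gives $E_{fl}(\phi) \ge \tfrac{1-U^2}{2}(\|\phi_t\|^2 + \|\nabla\phi\|^2)$; and the localized trace inequality $\|tr[\phi]\|_{L^2(\Omega)} \le C_\Omega\|\nabla\phi\|_{L^2(\mathbb R^3_+)}$ combined with Poincar\'e gives $|E_{int}(u,\phi)| \le \eta\|\nabla\phi\|^2 + C_\eta\|\Delta u\|^2$. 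Choosing $\eta$ small enough relative to $1-U^2$ and $c_0$ produces $\mathcal{E}(t) \ge c\|S_t y_0\|_Y^2 - C$; a symmetric upper bound $\mathcal{E}(0) \le C(\|y_0\|_Y)$ is immediate, establishing \eqref{stableS}. For \eqref{lip}, given $\|y^{(i)}\|_Y \le R$, both trajectories lie in a $Y$-ball of radius $C(R,T)$ on $[0,T]$ by \eqref{stableS}; the difference $(w, \psi) = S_\cdot y^{(1)} - S_\cdot y^{(2)}$ solves the linear coupled flow-plate system with plate forcing $-(f_B(u^{(1)}) - f_B(u^{(2)}))$. Since $f_B$ is $C^1$ with polynomial growth, its restriction to the trajectory ball is Lipschitz $H_0^2(\Omega) \to L^2(\Omega)$ with constant $L(R,T)$. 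Repeating the first step for the difference pair yields $\tfrac{d}{dt}\mathcal{E}_{\mathrm{diff}}(t) \le L(R,T)\|w_t(t)\|\,\|\Delta w(t)\|$, and Gronwall combined with the norm equivalence $\mathcal{E}_{\mathrm{diff}} \sim \|\cdot\|_Y^2$ (from the previous paragraph applied to the linearized system) closes the argument.

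\emph{Main obstacle.} The substantive step is the trace bound $\|tr[\phi]\|_{L^2(\Omega)} \le C_\Omega\|\nabla\phi\|_{L^2(\mathbb R^3_+)}$ used to absorb $E_{int}$: because $W^1(\mathbb R^3_+)$ is homogeneous and controls only the gradient in $L^2$, the standard $H^1$-trace estimate does not directly apply. The resolution is to combine the Sobolev embedding $W^1(\mathbb R^3_+) \hookrightarrow L^6(\mathbb R^3_+)$ (which provides $L^2$ control of $\phi$ on any bounded neighborhood of $\Omega$ in $\overline{\mathbb R^3_+}$) with a localized $H^1(\mathrm{nbhd}) \to H^{1/2}(\Omega) \hookrightarrow L^2(\Omega)$ trace estimate, and then to tune the absorption constants so that the resulting $C_\eta\|\Delta u\|^2$ remainder does not consume the coercivity already extracted from $E_{pl}$.
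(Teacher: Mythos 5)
Your multiplier computation for \eqref{eident} and the strategy of proving everything on strong solutions and passing to the limit are fine in outline (for generalized solutions one should note that the coupling terms make sense in the limit thanks to the hidden trace regularity \eqref{trace}, since the standard trace theorem does not apply to $\phi_t\in L^2(\realsthree_+)$). The genuine gap is in how you pass from the energy identity to the bound \eqref{stableS}. You estimate $|E_{int}(u,\phi)|\le \eta\|\nabla\phi\|^2+C_\eta\|\Delta u\|^2$ and then propose to choose $\eta$ "small enough relative to $1-U^2$ and $c_0$." But $C_\eta$ scales like $U^2/\eta$: making $\eta$ small enough to preserve the flow coercivity forces $C_\eta$ large, while the only positive $\|\Delta u\|^2$ term available from $E_{pl}$ carries the fixed coefficient $c_0\le 1/2$. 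So the absorption closes only for $U$ small and fails for general subsonic $U$; your "main obstacle" paragraph addresses how to define the trace (where Hardy's inequality, as in Lemma \ref{EIN}, already suffices), but not this large-constant problem, which is the actual crux. The paper's route is different: bound $|E_{int}|\le \delta\|\nabla\phi\|^2+CU^2\delta^{-1}\|u_x\|^2$ (Lemma \ref{EIN}), observe that $\|u_x\|^2$ is a \emph{lower-order} quantity, and absorb it using the superlinearity of the Berger potential via Lemma \ref{l:epsilon}, i.e. $\|u\|^2_{2-\eta}\le \epsilon[\|\Delta u\|^2+\Pi_*(u)]+M_{\epsilon,\eta}$, where the quartic $\Pi_*(u)=\tfrac{b_2}{4}\|\nabla u\|^4$ dominates any fixed multiple of $\|u_x\|^2$ up to a constant. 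This yields the two-sided comparison of Lemma \ref{energybound} and hence \eqref{stableS} uniformly in $0\le U<1$.

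The same issue resurfaces, in worse form, in your Lipschitz step: you invoke "norm equivalence $\mathcal E_{\mathrm{diff}}\sim\|\cdot\|_Y^2$ from the previous paragraph applied to the linearized system," but the difference system has no quartic term, so the superlinearity argument is unavailable and the direct absorption of the interactive term again fails for $U$ near $1$; as stated the equivalence is false in general. The standard repair is to treat the interactive term in the difference energy as genuinely lower order: it involves only $\|w\|_{H^1}$, which interpolates as $\|w\|_1^2\le \epsilon\|\Delta w\|^2+C_\epsilon\|w\|^2$, and $\|w(t)\|_{L^2}$ is controlled by $\|w(0)\|+\int_0^t\|w_t\|$, so these terms can be carried on the right-hand side of the Gronwall inequality rather than demanded to be dominated by the difference energy pointwise in time. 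With that bookkeeping (this is how \eqref{lip} is obtained in the cited references, the paper itself quoting the theorem from \cite{webster,jadea12}), your argument closes; without it, both \eqref{stableS} and \eqref{lip} are only justified for small $U$.
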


For the above semigroup we introduce the dynamics operator $\bT:\mathscr{D}(\bT)\subset Y_s \to Y_s$. For its precise structure, we give reference to \cite{jadea12,supersonic,webster}. We do have: \begin{equation}\label{domainprop}\cD(\bT) \subset  (H^4\cap H_0^2)(\Omega)\times H_0^2(\Omega) \times H^2(\realsthree_+)\times H^1(\realsthree_+). \end{equation}
\begin{remark}
The natural {\em invariance} of the dynamics is with respect to the norm $||\cdot||_{Y}$. However, via 
\begin{equation}\label{oneusing*}||\phi(t)||_{L^2(\realsthree_+)} \le ||\phi_0||_{L^2(\realsthree_+)}+\int_0^t||\phi_t(\tau)||_{L^2(\realsthree_+)} d\tau,\end{equation} invariance in $Y_s$ can be recovered {\em on finite time intervals}.
\end{remark}

In order to describe the dynamics of the flow in the context of long-time behavior it is necessary to introduce local spaces, denoted by $Y_{fl,\rho}$: $$\|(\phi_0,\phi_1)\|_{Y_{fl},\rho}\equiv \int_{ K_{\rho} } |\nabla \phi_0|^2  + |\phi_1|^2 d\xb,$$  
where $K_{\rho} \equiv \{ \xb \in \mathbb R^3_{+}; |\xb |\leq \rho \} $. We denote by $Y_{\rho}\subset Y$ the space $Y_{pl}\times Y_{fl,\rho}$. By virtue of the Hardy inequality \cite[p.301]{springer}:
$$\|(\phi_0,\phi_1)\|_{Y_{fl},\rho}^2 \le  \|(\phi_0,\phi_1)\|_{H^1(K_{\rho})\times L^2(K_{\rho})}^2\le \|(\phi_0,\phi_1)\|_{Y_{fl}}^2.$$

We now highlight the boundedness (from below) of $\mathcal E$.  Such a bound is necessary to obtain the semigroup stability in Theorem \ref{nonlinearsolution}. 
First, we have \cite[Lemma 5.2, p. 3136]{webster}:
\begin{lemma}\label{energybound} Let the hypotheses of Theorem \ref{subsonictheorem} be in force. 
Then for generalized solutions to \eqref{flowplate}, there exist positive constants $c,C,$ and $M$ positive such that
\begin{equation}  c \mathcal E_*(t)-M_{p_0,b_1,b_2} \le \mathcal E(t) \le C \mathcal E_*(t)+M_{p_0,b_1,b_2}  \end{equation}
\end{lemma}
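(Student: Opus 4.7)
My plan is to show that the difference $\mathcal{E}(t)-\mathcal{E}_*(t)$ can, for any prescribed $\epsilon>0$, be absorbed into $\epsilon\,\mathcal{E}_*(t)+C_\epsilon$; then choosing $\epsilon$ small enough yields both inequalities with $c=1-\epsilon$ and $C=1+\epsilon$. Reading off the definitions of $\Pi$, $\Pi_*$, and $E_{int}$,
\begin{equation*}
\mathcal{E}(t)-\mathcal{E}_*(t) \;=\; -\tfrac{b_1}{2}\|\nabla u\|^2 \;-\; \langle p_0, u\rangle_\Omega \;+\; 2U\langle tr[\phi], u_x\rangle_\Omega,
\end{equation*}
so it suffices to estimate these three ``lower-order'' terms individually.

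The two plate-only terms are routine. For the stretching correction, a weighted Young's inequality against the quartic $\Pi_*(u)=\tfrac{b_2}{4}\|\nabla u\|^4$ gives $\tfrac{|b_1|}{2}\|\nabla u\|^2 \le \epsilon\,\Pi_*(u) + C(\epsilon,b_1,b_2)$. For the source term, Cauchy--Schwarz combined with Poincar\'e on $H_0^2(\Omega)$ (so that $\|u\|\le C_\Omega\|\Delta u\|$) yields $|\langle p_0,u\rangle_\Omega| \le \epsilon\|\Delta u\|^2 + C(\epsilon,p_0,\Omega)$. Both are absorbed into the positive constituents $\Pi_*(u)$ and $\tfrac12\|\Delta u\|^2$ of $E_*(u)$.

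The interaction term is the nontrivial piece and will be the main obstacle. Cauchy--Schwarz first yields $|E_{int}|\le 2U\|tr[\phi]\|_{L^2(\Omega)}\|u_x\|_{L^2(\Omega)}$, and the key ingredient is a trace estimate on the \emph{homogeneous} space $W^1(\mathbb{R}^3_+)$: because $\Omega$ is bounded, a Hardy-type argument (as indicated in the discussion following Theorem \ref{subsonictheorem}) together with the standard trace theorem on a bounded neighborhood of $\Omega$ yields $\|tr[\phi]\|_{L^2(\Omega)}\le C_\Omega\|\nabla\phi\|_{L^2(\mathbb{R}^3_+)}$ after subtracting the additive constant (which is annihilated against $u_x$ since $u\in H_0^2$ gives $\int_\Omega u_x=0$). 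A weighted Young's inequality then produces $|E_{int}|\le \mu\|\nabla\phi\|^2 + \tfrac{U^2 C_\Omega^2}{\mu}\|\nabla u\|^2$ for any $\mu>0$. The $\|\nabla u\|^2$ piece is again absorbed into $\Pi_*(u)$ at the cost of a constant. The $\|\nabla\phi\|^2$ piece is absorbed into $E_{fl}(\phi)$: here I use $U<1$ critically, since it makes $E_{fl}$ coercive,
\begin{equation*}
E_{fl}(\phi)\;\ge\;\tfrac{1-U^2}{2}\|\nabla\phi\|^2 + \tfrac12\|\phi_t\|^2,
\end{equation*}
so that choosing $\mu = \tfrac{\epsilon(1-U^2)}{2}$ absorbs the term into $\epsilon\,E_{fl}(\phi)$.

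Assembling the three bounds for $\epsilon$ sufficiently small (depending on $U\in[0,1)$ and the geometry of $\Omega$) yields $c\,\mathcal{E}_*(t)-M\le \mathcal{E}(t)\le C\,\mathcal{E}_*(t)+M$ with $M=M(p_0,b_1,b_2,U,\Omega)$ as claimed. The subtle step is the homogeneous-space trace bound: unlike in $H^1(\mathbb{R}^3_+)$, one has no direct $L^2$ control of $\phi$ itself, and one must exploit the compact support of $u_x$ together with the Hardy inequality. The interaction of this trace constant with the coercivity factor $1-U^2$ also explains why the estimate is restricted to the subsonic regime; in the supersonic range the present argument breaks down and the alternative topological measure $E_{fl}^{sup}$ (from the remark) becomes necessary.
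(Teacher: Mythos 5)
Your proposal is correct and follows essentially the same route as the paper: the paper's proof (cited from \cite{webster}) rests on precisely the two ingredients you use, namely the Hardy-inequality/trace bound on $E_{int}$ (Lemma \ref{EIN}, with absorption into $E_{fl}$ using the subsonic coercivity $1-U^2>0$) and the superlinearity of $\Pi_*$ to absorb the lower-order plate terms $\|\nabla u\|^2$, $\langle p_0,u\rangle_\Omega$ (Lemma \ref{l:epsilon}, which you replace by the equivalent direct Young's inequality against the quartic). Your extra observation that any additive constant in the homogeneous space is annihilated against $u_x$ since $\int_\Omega u_x=0$ for $u\in H_0^2(\Omega)$ is a legitimate fine point consistent with the paper's treatment.
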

The proof of Lemma \ref{energybound} given in \cite{webster}  relies on two estimates controlling lower frequencies.  The first estimate controls interactive energy $E_{int}$, on the strength  of   Hardy inequality.
\begin{lemma}\label{EIN} For $\phi \in W^1(\mathbb R^3_+)$ and $u \in H^1(\Omega)$.
 \begin{equation} |E_{int}(t)| \le \delta \|\nabla \phi(t)\|_{L^2(\realsthree_+)}^2+C{U^2}{\delta^{-1}}\|u_x(t)\|_{L^2(\Omega)}^2, ~~\delta>0, \end{equation}
 \end{lemma}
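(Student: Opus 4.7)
The plan is to start from the definition $E_{int}(t) = 2U\langle tr[\phi(t)], u_x(t)\rangle_\Omega$ and reduce the bound to two standard ingredients: a Cauchy--Schwarz/Young splitting and a local trace bound on $\Omega$ in terms of $\|\nabla \phi\|_{L^2(\mathbb R_+^3)}$ alone. By Cauchy--Schwarz,
\begin{equation*}
|E_{int}(t)| \le 2U\,\|tr[\phi(t)]\|_{L^2(\Omega)}\,\|u_x(t)\|_{L^2(\Omega)},
\end{equation*}
and then Young's inequality with a parameter $\eta>0$ yields
\begin{equation*}
|E_{int}(t)| \le \eta\,\|tr[\phi(t)]\|_{L^2(\Omega)}^2 + \eta^{-1}U^2\,\|u_x(t)\|_{L^2(\Omega)}^2.
\end{equation*}

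The only nontrivial step is to show that there is a constant $C_\Omega$, depending only on $\Omega$, such that
\begin{equation*}
\|tr[\phi]\|_{L^2(\Omega)}^2 \le C_\Omega\,\|\nabla \phi\|_{L^2(\mathbb R_+^3)}^2 \qquad \forall\,\phi\in W^1(\mathbb R_+^3).
\end{equation*}
This is the point where the homogeneity of $W^1(\mathbb R_+^3)$ (the space defined only through the gradient seminorm) makes the usual trace theorem unavailable directly. I would fix a bounded ball $K_\rho\subset\mathbb R_+^3$ containing a neighborhood of $\Omega$ and invoke the Hardy inequality on $\mathbb R_+^3$, which gives $\phi/(1+|\mathbf x|)\in L^2(\mathbb R_+^3)$ with norm controlled by $\|\nabla \phi\|_{L^2(\mathbb R_+^3)}$. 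Consequently $\phi\in H^1(K_\rho)$ with $\|\phi\|_{H^1(K_\rho)}\le C_\rho\|\nabla \phi\|_{L^2(\mathbb R_+^3)}$, and the standard trace theorem applied on the Lipschitz domain $K_\rho$ delivers the asserted bound on $\|tr[\phi]\|_{L^2(\Omega)}$. A density argument from smooth, compactly supported $\phi$ extends the estimate to all of $W^1(\mathbb R_+^3)$.

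Combining these two pieces and reparametrizing $\eta$ so that $\eta C_\Omega=\delta$ gives exactly
\begin{equation*}
|E_{int}(t)| \le \delta\,\|\nabla \phi(t)\|_{L^2(\mathbb R_+^3)}^2 + C\,U^2\delta^{-1}\,\|u_x(t)\|_{L^2(\Omega)}^2,
\end{equation*}
with $C=C_\Omega$ depending only on $\Omega$. The single point requiring care is the trace/Hardy step; the rest is purely algebraic. Note also that replacing $\|u_x\|$ by $\|\nabla u\|$ would be wasteful, since the structure of $E_{int}$ singles out the $x$-derivative and this refinement is precisely what makes the bound useful in controlling the full energy $\mathcal E$ by $\mathcal E_*$ in Lemma \ref{energybound}.
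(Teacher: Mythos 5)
Your proposal is correct and follows essentially the same route as the paper, which obtains this estimate (via \cite{webster}) ``on the strength of the Hardy inequality'': Cauchy--Schwarz and Young reduce everything to the bound $\|tr[\phi]\|_{L^2(\Omega)}^2 \le C_\Omega\|\nabla\phi\|_{L^2(\mathbb R^3_+)}^2$, which is exactly the Hardy-plus-local-trace step you carry out on $K_\rho$. Your closing density remark is the right way to reconcile this with the homogeneous space $W^1(\mathbb R^3_+)$, on which the Hardy inequality is valid in the sense used in \cite[p.~301]{springer}.
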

The next critical estimate  controls low frequencies  \cite[p. 49]{springer} by exploiting superlinearity:
\begin{lemma}\label{l:epsilon}
For any $u \in H^2(\Omega) \cap H_0^1(\Omega) $ and   $\eta,\epsilon > 0 $ there exists $M_{\epsilon,\eta} $ such that
$$\|u\|^2_{H^{2-\eta}(\Omega)} \leq \epsilon [\|\Delta u\|^2_{L^2(\Omega)}  + \Pi_*(u) ] + M_{\eta,\epsilon}$$
\end{lemma}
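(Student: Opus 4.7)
The plan is a standard compactness-plus-superlinearity argument. The core idea is that the embedding $H^2(\Omega) \hookrightarrow H^{2-\eta}(\Omega)$ is compact for $\eta>0$, so by an Ehrling-type inequality one can trade $H^{2-\eta}$-control for a small amount of $H^2$-control plus a large amount of $L^2$-control; the superlinearity of $\Pi_*$ then lets us absorb the remaining lower-order term up to a constant.

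First, I would invoke the Ehrling lemma: since $H^2(\Omega) \Subset H^{2-\eta}(\Omega) \hookrightarrow L^2(\Omega)$, for every $\delta>0$ there exists $C_\delta>0$ with
\begin{equation*}
\|u\|_{H^{2-\eta}(\Omega)}^2 \leq \delta \|u\|_{H^2(\Omega)}^2 + C_\delta \|u\|_{L^2(\Omega)}^2.
\end{equation*}
Since $u\in H^2(\Omega)\cap H_0^1(\Omega)$ and $\Gamma$ is smooth, elliptic regularity (or the equivalence of norms on this subspace) gives $\|u\|_{H^2(\Omega)}^2 \leq c_0 \|\Delta u\|_{L^2(\Omega)}^2$.

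Next, I would bring the lower-order term down to the $\|\nabla u\|$ level via Poincar\'e, $\|u\|_{L^2(\Omega)}^2 \leq c_\Omega \|\nabla u\|_{L^2(\Omega)}^2$, and then exploit superlinearity. Setting $x = \|\nabla u\|_{L^2(\Omega)}^2 \geq 0$ and using the elementary inequality (a completed square)
\begin{equation*}
x \leq \kappa x^2 + \frac{1}{4\kappa}, \qquad \kappa>0,
\end{equation*}
gives
\begin{equation*}
c_\Omega \|\nabla u\|_{L^2(\Omega)}^2 \leq c_\Omega\kappa\,\|\nabla u\|_{L^2(\Omega)}^4 + \frac{c_\Omega}{4\kappa} = \frac{4c_\Omega\kappa}{b_2}\,\Pi_*(u) + \frac{c_\Omega}{4\kappa}.
\end{equation*}

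Finally, I would combine the three estimates. Given $\epsilon,\eta>0$, pick $\delta = \epsilon/c_0$ and then pick $\kappa>0$ small enough that $4c_\Omega C_\delta\kappa/b_2 \leq \epsilon$; the remaining additive term $C_\delta c_\Omega/(4\kappa)$ is the constant $M_{\eta,\epsilon}$. The result is exactly
\begin{equation*}
\|u\|_{H^{2-\eta}(\Omega)}^2 \leq \epsilon\bigl[\|\Delta u\|_{L^2(\Omega)}^2 + \Pi_*(u)\bigr] + M_{\eta,\epsilon}.
\end{equation*}
There is really no serious obstacle here: the only point requiring mild care is the bookkeeping of constants to ensure that both the $\|\Delta u\|^2$ and $\Pi_*(u)$ pieces end up with the same prefactor $\epsilon$, and the observation that the superlinear (quartic in $\|\nabla u\|$) character of $\Pi_*$ is precisely what allows the unwanted $\|\nabla u\|^2$ term to be absorbed at the cost of an additive constant only.
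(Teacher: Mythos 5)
Your proof is correct. The paper does not actually prove this lemma---it quotes it from Chueshov--Lasiecka \cite[p.~49]{springer}---and your argument (Ehrling's lemma for the compact embedding $H^2(\Omega)\Subset H^{2-\eta}(\Omega)$, elliptic regularity giving $\|u\|_{H^2}\le c_0\|\Delta u\|$ on $H^2\cap H_0^1$, Poincar\'e, and absorption of the resulting $\|\nabla u\|^2$ term into the quartic $\Pi_*(u)$ via $x\le\kappa x^2+\tfrac{1}{4\kappa}$) is exactly the standard proof of that cited estimate, with the constants bookkept correctly.
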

From the above lemmata  and energy inequality  we  have \cite{jadea12,webster}:
\begin{corollary}\label{globalbound} Take the hypotheses of Theorem \ref{subsonictheorem}. Then any generalized solution to \eqref{flowplate} satisfies \begin{equation}\label{apr}
\sup_{t \ge 0} \left\{\|u_t\|_{L^2(\Omega)}^2+\|\Delta u\|_{L^2(\Omega)}^2+\|\phi_t\|_{L^2(\realsthree_+)}^2+\|\nabla \phi\|_{L^2(\realsthree_+)}^2 \right\}  \leq C\big(\|y_0\|_Y\big)< + \infty.\end{equation}
In addition, if $k>0$,  then the dissipation integral is finite: we have \begin{equation}\label{dissint} \int_0^{\infty} \|u_t(t)\|_{L^2(\Omega)}^2 dt \leq  K(y_0) < \infty.\end{equation}

\end{corollary}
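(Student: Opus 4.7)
The plan is to combine the three ingredients already collected in Theorem \ref{subsonictheorem} and Lemmata \ref{energybound}--\ref{l:epsilon} in an essentially bookkeeping manner. The conservation/dissipation identity controls the unsigned energy $\mathcal{E}(t)$ in terms of $\mathcal{E}(0)$, and Lemma \ref{energybound} converts this to control over the positive energy $\mathcal{E}_*(t)$, which in turn dominates the four quantities in \eqref{apr}.

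More precisely, first I would invoke the energy identity \eqref{eident} with $s=0$: since $k \ge 0$, we immediately obtain the monotonicity $\mathcal{E}(t) \le \mathcal{E}(0)$ for all $t \ge 0$. Applying the two-sided inequality of Lemma \ref{energybound} on both ends gives
\begin{equation*}
c\,\mathcal{E}_*(t) \le \mathcal{E}(t) + M_{p_0,b_1,b_2} \le \mathcal{E}(0) + M_{p_0,b_1,b_2} \le C\,\mathcal{E}_*(0) + 2 M_{p_0,b_1,b_2},
\end{equation*}
so $\mathcal{E}_*(t) \le (C/c)\mathcal{E}_*(0) + (2/c) M_{p_0,b_1,b_2}$. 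Next I would note that $\mathcal{E}_*(0)$ is a polynomial (quartic) function of $\|y_0\|_Y$: the plate part is bounded by $\tfrac12\|y_0\|_Y^2 + \tfrac{b_2}{4}\|\nabla u_0\|^4 \lesssim \|y_0\|_Y^2 + \|y_0\|_Y^4$ (Poincaré on $H_0^2(\Omega)$), and the flow part is already $E_{fl}(\phi_0) \le \tfrac12\|y_0\|_Y^2$. Finally, to extract \eqref{apr} from the bound on $\mathcal{E}_*(t)$, I would use that $\Pi_*(u) \ge 0$ and that for $0 \le U < 1$,
\begin{equation*}
E_{fl}(\phi) \ge \tfrac{1}{2}\,\|\phi_t\|_{L^2(\mathbb R^3_+)}^2 + \tfrac{1-U^2}{2}\,\|\nabla \phi\|_{L^2(\mathbb R^3_+)}^2,
\end{equation*}
since $\|\partial_x \phi\| \le \|\nabla \phi\|$; thus each of the four square quantities in \eqref{apr} is bounded by a multiple of $\mathcal{E}_*(t)$.

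For the dissipation integral \eqref{dissint}, when $k > 0$, the same identity \eqref{eident} rearranges to
\begin{equation*}
k \int_0^t \|u_t(\tau)\|_{L^2(\Omega)}^2\, d\tau = \mathcal{E}(0) - \mathcal{E}(t) \le \mathcal{E}(0) + M_{p_0,b_1,b_2},
\end{equation*}
where the upper bound on the right uses only the lower bound $\mathcal{E}(t) \ge c\,\mathcal{E}_*(t) - M_{p_0,b_1,b_2} \ge -M_{p_0,b_1,b_2}$ from Lemma \ref{energybound} (here the nonnegativity of $\Pi_*$ and, for the flow, of $E_{fl}$ in the subsonic regime, matter). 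The right-hand side is independent of $t$, so sending $t \to \infty$ yields the claim with $K(y_0) = k^{-1}(\mathcal{E}(0) + M_{p_0,b_1,b_2})$.

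Since all the analytical substance---recovery of coercivity modulo a constant via Hardy (Lemma \ref{EIN}) and low-frequency absorption via compactness (Lemma \ref{l:epsilon})---is already encapsulated in Lemma \ref{energybound}, there is no real obstacle at this stage; the only thing to be careful about is that the additive constant $M_{p_0,b_1,b_2}$ appears on \emph{both} sides of Lemma \ref{energybound}, so the resulting estimates are uniform in $t$ but only semi-affine in the data, as reflected by the generic nonlinear dependence $C(\|y_0\|_Y)$ in the statement.
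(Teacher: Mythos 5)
Your proposal is correct and follows essentially the same route the paper takes: the corollary is obtained by combining the energy identity \eqref{eident} with the two-sided bound of Lemma \ref{energybound} (which already encapsulates Lemmas \ref{EIN} and \ref{l:epsilon}), using subsonic coercivity of $E_{fl}$ and nonnegativity of $\Pi_*$ to read off \eqref{apr}, and rearranging the identity for \eqref{dissint}. No gaps.
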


\section{Main Results and Discussion}\label{techresults}
There are four main results in this paper, which we first describe informally.

The first main result shown here is that we can reduce the dynamical system $(S_t,Y)$, associated to solutions of \eqref{flowplate}, to a delayed plate system $(T_t,\bH)$. The reduction also brings with it some natural damping from the flow-plate coupling. The proof of this result was shown earlier with rotational inertia in the plate dynamics \cite{b-c-1,springer}, and given in \cite{delay} for the model here, albeit with minimal details.
 The remaining three results concern the long-time behavior of the {\em plate component} of the flow-plate system. Namely, without imposing any structural damping, we have three notions of finite dimensional end behavior for the plate component $(S_t,Y)$. 

First, we show that a compact global attractor exists---this is a compact set in the phase space that also happens to be smooth and finite dimensional; it is fully invariant and uniformly attracts all bounded sets. This result was first shown in \cite{delay}, but the proof at hand is streamlined by taking advantage of the structure of Berger's nonlinearity. Secondly, we show that by ``fattening" the attractor, we obtain a forward invariant set in the phase space that attracts all bounded sets with {\em exponential rate}, though the finite dimensionality of this set may be in a weaker topology than $Y_{pl}$. Both of these results produce ``nice" sets which somehow fully capture the essential non-transient behavior of the flutter system in \eqref{flowplate}, while also being fundamentally finite dimensional. {\em This is to say that LCOs associated to flutter, as non-stationary end behaviors, are contained in the attractor.} These results are implied by abstract statements in \cite{springer}, but are not written out explicitly, as we do here. Moreover, the proofs here rely critically on obtaining the quasi-stability estimate on a bounded, forward invariant set---not the approach given in \cite{springer} (and references therein).

Finally, the last result concerns {\em determining functionals}. The existence of such functionals gives a practical means of uniquely characterizing time-asymptotic behavior of solutions. Indeed, as the structure of the attractor can be quite complex, and finite dimensionality estimates are often inflated, making direct use of the attractor's finite dimensionality is difficult. 
We show that, through the stabilizability estimate, finite sets of nodal values, modal coefficients, or local volume averages, provide determining functionals. That is, {\em these practical, finite collections uniquely determine trajectories, providing a sufficient set of statistics for characterizing global end behaviors}. This is a {\bf new result for this system} and is not directly implied by previous work, and we explicitly provide the construction.

All three results on finite dimensionality, presented here, have proofs which critically rely on the notion of a quasi-stable dynamical system. In fact, one might say that the results and proofs here provide a clear advertisement for the clarity and power of quasi-stability theory---specifically, when one can obtain the quasi-stability estimate on an absorbing ball.

	\subsection{Definition of Main Objects} Let $H$ be a Hilbert space with $(S_t,H)$ an associated dynamical system. 
	
		The {\it fractal} (box-counting) {\it dimension} of a set $A \subset H$, denoted ${\rm dim}_f A$, is defined by
\[
{\rm dim}_f A=\limsup_{\e\to 0}\frac{\ln n(A,\e)}{\ln (1/\e)}\;,
\]
where $n(M,\e)$ is the minimal number of closed balls in $H$ of the
radius $\e$ covering the set $M$. By Ma\~{n}\'{e}'s Theorem, a set that has finite fractal dimension can be embedded into some $\mathbb R^n$, and thus can be injected as a subset of some higher dimensional Euclidean space \cite{CRV}.

	We recall that (see, e.g., \cite{Babin-Vishik,ch-0,lad}) for the system $(S_t,H)$,
a compact \textit{global attractor} $\mathbf{A} \subset \subset H$ is an invariant set (i.e., $S_t\mathbf{A}=\mathbf{A}$ for all
$t \ge 0 $) that uniformly attracts bounded sets $B\subset H$:
\begin{equation}  \label{dist-u}
\lim_{t\to+\infty}d_{H}\{S_t B|\bA\}=0,~~~\mbox{where}~~
d_{H}\{S_t B|\bA\}\equiv\sup_{y\in B}{\rm dist}_{H}(y,\bA),
\end{equation}
As we will see,  often the compact attractor $\bA$ is more regular than $H$, with $\text{dim}_f\mathbf A < \infty$.

A generalized fractal {\em exponential attractor} for the dynamics $(S_t,H)$ is a {\em forward invariant}, compact set, $ A_{\text{exp}} \subset H$ with finite fractal dimension that attracts bounded sets (as above) {\em with uniform exponential rate} in $H$.   The word ``generalized" is included to indicate that the finite dimensionality is perhaps in a weaker topology ($||\cdot ||_{\widetilde H}$) than that of state space ($||\cdot||_H$).


 Lastly, let $\mathscr L = \{l_j~:~j=1,...,N\}$ be a finite set of continuous, linear functionals on $H$ (or some component of $H$, if it is a product space). We say that $\mathscr L$ is a (an asymptotically) determining set of functionals for $(S_t,H)$ if the following condition holds:
 $$\Big(\lim_{t \to \infty} | l_j(S_ty_1)-l_j(S_ty_2)| = 0\Big)~~(\forall~~j=1,...,N)~~\implies~~\lim_{t \to \infty} ||S_ty_1-S_ty_2||_H =0.$$

	\subsection{Delayed Dynamical System}\label{reducedresults}
	 Below, we make use of the notation $u^t=\{u(t+s)~:~~s \in (-t^*,0)\}$ for some fixed {\em time of delay}, $t^*>0$. 
\begin{theorem}[Delayed Dynamical System]\label{rewrite}
Let the hypotheses of Theorem~\ref{nonlinearsolution} be in force,
and $(u_0,u_1;\phi_0,\phi_1) \in H_0^2(\Omega) \times L^2(\Omega) \times H^1(\realsthree_+) \times L^2(\realsthree_+)$.
Assume there exists a $\rho_0>0$ such that ~$supp(\phi_0),supp(\phi_1) \subset K_{\rho_0}$.
Then the there exists a time $t^{\#}(\rho_0,U,\Omega) > 0$ such that for all $t>t^{\#}$ any weak solution $u(t)$ in
(\ref{flowplate})  satisfies the following equation (in a weak sense):
\begin{equation}\label{reducedplate}
u_{tt}+\Delta^2u+ku_t+f(u)=p_0-(\partial_t+U\partial_x)u-q(u^t),
\end{equation}
with
\begin{equation}\label{potential}
q(u^t)=\dfrac{1}{2\pi}\int_0^{t^*}\int_0^{2\pi} M^2_{\theta} [u_{\text{ext}}(\xb(U,\theta,s), t-s)]d\theta ds.
\end{equation}
Here, ~$M_{\theta} \equiv \sin(\theta)\partial_x+\cos(\theta) \partial_y$, ~$\xb(U,\theta,s) = \big(x-(U+\sin \theta)s,y-s\cos\theta\big) \subset \realstwo$, and 
 \begin{equation}\label{delay} t^*\equiv \inf \{ t~:~\xb(U,\theta, s) \notin \Omega \text{ for all } \xb \in \Omega, ~\theta \in [0,2\pi], \text{ and } s>t\}.
\end{equation} 
\end{theorem}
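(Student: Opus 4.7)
The plan is to eliminate the flow variable from the coupled system by solving the flow PDE in closed form along the plate footprint and substituting the result into the plate equation. Three ingredients drive the argument: a convective change of variables that reduces the flow equation to the standard wave equation; Kirchhoff's representation (after reflection across $z=0$) for the boundary trace; and finite speed of propagation (indeed, strong Huygens' principle in 3D), which guarantees that the compactly-supported flow initial data contribute nothing to the boundary pressure once $t$ exceeds a threshold $t^{\#}(\rho_0,U,\Omega)$.

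First, I would set $\psi(x,y,z,t)=\phi(x+Ut,y,z,t)$, so that $(\partial_t+U\partial_x)^2\phi=\psi_{tt}$ and $\Delta\phi=\Delta\psi$, reducing the flow equation to the free wave equation $\psi_{tt}=\Delta\psi$ on $\realsthree\times(0,T)$. The Neumann data become $\partial_z\psi(x,y,0,t)=F(x,y,t):=[(\partial_t+U\partial_x)u(x+Ut,y,t)]_{\text{ext}}$, and the quantity feeding back into the plate is $(\phi_t+U\phi_x)(x,y,0,t)=\psi_t(x-Ut,y,0,t)$. Extending $\psi$ evenly in $z$ gives $\tilde\psi_{tt}-\Delta\tilde\psi=-2F(x,y,t)\,\delta(z)$ on $\mathbb{R}^3$, and the Kirchhoff--Duhamel formula yields a closed-form expression for $\tilde\psi$. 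Evaluating at a boundary point and integrating the $\delta$-function over the equatorial circle of each retarded sphere collapses the surface integral to a double integral in $(\theta,s)\in[0,2\pi)\times[0,t]$.

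Next, both spherical-mean contributions of the initial data vanish on $\Omega\times\{0\}$ once $t>t^{\#}$: because $\mathrm{supp}(\phi_0),\mathrm{supp}(\phi_1)\subset K_{\rho_0}$, the retarded spheres of radius $t$ centered at points of $\Omega\times\{0\}$ eventually miss $K_{\rho_0}$, with $t^{\#}$ depending on $\rho_0$, $U$, and $\mathrm{diam}(\Omega)$. For such $t$ the boundary pressure is determined entirely by $F$. Differentiating the source integral in $t$ produces two contributions: a local term from the Leibniz boundary at retarded time $\sigma=0$ (where the spherical mean collapses to the evaluation point), which after bookkeeping yields the ``damping'' $-(\partial_t+U\partial_x)u$, and a delayed contribution, in which the time derivative is converted to tangential spatial derivatives via $\psi_{tt}=\Delta\psi$ and integration by parts along the equator. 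This is the step where the directional operator $M_\theta=\sin\theta\,\partial_x+\cos\theta\,\partial_y$ and the kernel $M_\theta^2[u_{\mathrm{ext}}]$ emerge. The retarded spatial argument $\xb(U,\theta,s)=(x-(U+\sin\theta)s,y-s\cos\theta)$ encodes both the convective shift $x\mapsto x-Ut$ from Step~1 and the spherical parameterization $(x+\sigma\cos\theta,y+\sigma\sin\theta)$ evaluated at retarded time $t-\sigma$.

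Finally, the upper limit $t^*$ in the $s$-integration comes from the support of $u_{\mathrm{ext}}$: once $\xb(U,\theta,s)\notin\Omega$ for every $\xb\in\Omega$ and every $\theta$, the integrand vanishes, giving \eqref{delay}. Substituting the resulting expression for the boundary pressure into the plate equation yields \eqref{reducedplate} weakly. To handle the weak formulation rigorously, I would first run the computation on strong solutions with data in $\cD(\bT)$, where all Kirchhoff manipulations are classical, and then pass to the limit via the Lipschitz dependence \eqref{lip}. The main obstacle I anticipate is the bookkeeping of the third paragraph: isolating the local damping term with the correct sign and constant, and faithfully converting Cartesian spatial derivatives on the equator into the rotational $M_\theta^2$ form while tracking the factors contributed by reflection, spherical parameterization, and the convective shift. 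The manipulation is essentially computational, but the rigidity of the target identity leaves little room for sign or constant errors.
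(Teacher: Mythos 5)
Your overall architecture is the same as the paper's: solve the flow explicitly, use strong Huygens' principle to kill the contribution of the compactly supported flow data after a finite time $t^{\#}$, and read off the Neumann-to-Dirichlet map for the source part to produce the local damping $-(\partial_t+U\partial_x)u$ and the delayed term $q(u^t)$, with the cutoff $t^*$ coming from the support of $u_{\text{ext}}$. The two places where you genuinely diverge are instructive. First, the representation formula: the paper decomposes $\phi=\phi^*+\phi^{**}$ as in \eqref{flow1}--\eqref{flow2} and simply quotes the explicit solver (Theorem \ref{flowformula}, derived in the literature by Fourier--Laplace methods) together with Corollary \ref{calcs}; you rederive it from scratch via the convective change of frame, even reflection in $z$ with the resulting surface source $-2F\delta(z)$, and Kirchhoff--Duhamel, collapsing the retarded spheres to their equatorial circles. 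This is equivalent and more self-contained, at the cost of carrying out yourself the bookkeeping the paper outsources; note that the $M_\theta^2$ kernel and the local term arise most cleanly not ``via $\psi_{tt}=\Delta\psi$'' but by integrating by parts in the retarded variable $s$ along the oblique rays, using $\frac{d}{ds}\big[u_{\text{ext}}(\xb(U,\theta,s),t-s)\big]=-\big(\partial_t+(U+\sin\theta)\partial_x+\cos\theta\,\partial_y\big)u_{\text{ext}}$, whose boundary term at $s=0$ is exactly the damping and whose averaged bulk term is \eqref{potential}. Second, rigor for weak solutions: the paper works directly on the given finite-energy solution of the coupled problem and invokes the hidden trace regularity \eqref{trace} precisely because, with $\alpha=0$, the component-wise solve of \eqref{flow2} with only $L^2$-in-space Neumann data loses $1/3$ derivative (Remark \ref{regularity}); you instead verify the identity for strong solutions and pass to the limit using \eqref{lip}. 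That route is viable because the flow has been eliminated from the limiting identity \eqref{reducedplate}, whose terms (including $q$, which is continuous from the delay space into $H^{-1}(\Omega)$) pass to the limit under convergence in $Y$; but you should say explicitly that the approximating data can be chosen to satisfy the compatibility condition $\partial_z\phi_0=[u_1+U\partial_x u_0]_{\text{ext}}$ and to have flow components supported in a slightly larger, fixed ball, so that $t^{\#}$ is uniform along the approximating sequence. With those two points addressed, your proof is sound; the remaining work is exactly the sign-and-constant computation you flagged, which is also the part the paper itself defers to references.
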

The structure of the delay potential $q(\cdot)$ comes from the explicit solver for the potential flow equation (with Neumann data $[u_t+Uu_x]_{ext}$) on $\mathbb R^3_+$. 
\begin{remark} The system given in \eqref{reducedplate}--\eqref{potential} (taken with appropriate initial conditions) is independently well-posed \cite{delay}. This is to say that the \eqref{genplatedelay} below is well-posed in the appropriate delay sense, and generates a delay dynamical system. (This is discussed at length in Section \ref{delayplatesec}.)
\begin{equation}\label{genplatedelay}\begin{cases}
u_{tt}+\Delta^2u+k_0u_t+f_B(u)= p_0+Lu+q(u^t,t) ~~ \text { in } ~\Omega\times (0,T), \\
u=\Dn u = 0  ~~\text{ on } ~ \Gamma \times (0,T),  \\
u(0)=u_0,~~u_t(0)=u_1,~~\\ u|_{t \in (-t^*,0)} = \eta\in L^2(-t^*,0;H^2_0(\Omega)).
\end{cases}
\end{equation}
The delay potential $q(u^t,t)$ on the RHS is given by the function \newline $q :\, L^2(-t^*,0;H^2_0(\Omega))\times \R\mapsto \R$.
 The scalar $k_0> 0$ is a damping coefficient that includes imposed structural damping, and damping through the flow via Theorem \ref{rewrite}. The continuous, linear operator $L:H^{\sigma}(\Omega) \to L^2(\Omega),~~\sigma<2$ encompasses spatial lower order terms that need not have conservative structure
 (e.g., the term $-Uu_x$  in \eqref{reducedplate}).
 \end{remark}
 
 In application, we will consider an initial datum $y_0 \in Y$ corresponding to the dynamics $S_t(y_0)$ in \eqref{flowplate} (the full flow-plate dynamics). We employ the reduction result  Theorem \ref{rewrite}, and we may consider the ``initial time" ($t=t_0$) for the delay dynamics corresponding to any time after the reduction time $t^{\#}(\rho_0,U,\Omega)$ above. At such a time, the data which is fed into \eqref{reducedplate} is $x_0=(u(t_0),u_t(t_0),u^{t_0})$, where this data is determined by the full dynamics of \eqref{flowplate} on $(t_0-t^*,t_0)$. Thus, given a trajectory $S_t(y_0)=y(t)=(u(t),u_t(t);\phi(t),\phi_t(t)) \in Y$, we may analyze the corresponding delay evolution $(T_t,\mathbf H)$, with $\mathbf H \equiv H_0^2(\Omega)\times L^2(\Omega)\times L^2\left(-t^*,0;H_0^2(\Omega)\right),$ with given data $x_0 \in \mathbf H$. We then have that $T_t(x_0)=\left(u(t),u_t(t);u^t\right)$ with $x_0=(u_0,u_1,\eta)$. The norm is taken to be $$||(u,v;\eta)||^2_{\mathbf H} \equiv ||\Delta u||^2+||v||^2+\int_{-t^*}^0||\Delta \eta(t+s)||^2 ds.$$

	\subsection{Attractors}
	In this section we refer to the delay dynamical system $(T_t,\mathbf H)$ corresponding to the previous section. We emphasize that, for all of these results, we   need not impose any structural damping, i.e., $k$ can be taken to be zero since the damping is inherited from the flow via \eqref{reducedplate}.
		\begin{theorem}[Smooth, Finite Dimensional Global Attractor]\label{maintheorem}
Let $b_2,k\ge 0$, $U \neq 1$,  $p_0 \in L^2(\Omega)$, and $b_1 \in \mathbb R$ in \eqref{flowplate}. Also assume the flow data $\phi_0,\phi_1 \in Y$ are localized (as in Theorem \ref{rewrite}).  Then the corresponding delay system $(T_t,\bH)$ has a compact global attractor $\bA$ of finite fractal dimension. Moreover, $\bA$ has additional regularity: any full trajectory $y(t)=(u(t),u_t(t),u^t)\subset \bA$, $t\in\R$, has the property that $u \in L^{\infty}(\R;H^4(\Omega)\cap H_0^2(\Omega))$, $u_t \in L^{\infty}(\R;H_0^2(\Omega))$, and $u_{tt} \in L^{\infty}(\mathbb R; L^2(\Omega))$.
\end{theorem}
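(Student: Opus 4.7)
The plan is to work with the reduced delay system $(T_t,\mathbf{H})$ afforded by Theorem \ref{rewrite}, and to drive everything through the Chueshov--Lasiecka quasi-stability machinery as advertised in the introduction. The strategy has three ingredients: (i) a bounded, forward invariant absorbing ball $\mathcal{B}\subset\mathbf{H}$; (ii) a stabilizability (quasi-stability) estimate for differences of trajectories restricted to $\mathcal{B}$; (iii) invoking the abstract corollaries of quasi-stability to extract the compact attractor, its finite fractal dimension, and the extra regularity of trajectories living on it.

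For the absorbing set, I would exploit the fact that the reduction in Theorem \ref{rewrite} produces a genuine damping term $-u_t$ on the right-hand side of \eqref{reducedplate}; combined with the a priori bound from Corollary \ref{globalbound} and the energy equality \eqref{eident} of the full flow-plate system, this yields a dissipation integral even when the imposed damping $k$ vanishes. Testing \eqref{reducedplate} with $u_t$ and using Lemmas \ref{EIN}--\ref{l:epsilon} to dominate the low-frequency terms via $\Pi_*$ (together with the bound on $q(u^t)$ in $L^2$ by the $\mathbf{H}$-norm of $u^t$), I would obtain an inequality of the form $\tfrac{d}{dt}E(t)+cE(t)\le C$ modulo compact perturbations; integrating gives a uniform absorbing ball $\mathcal{B}\subset\mathbf{H}$.

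The main obstacle is the quasi-stability estimate on $\mathcal{B}$. For two trajectories $T_tx_i=(u_i,u_i',u_i^t)$ with data in $\mathcal{B}$, set $z=u_1-u_2$; then $z$ satisfies a linearized plate equation with delay, driven by the Berger-type remainder $f(u_1)-f(u_2)$ and the non-conservative convective/delay terms. Two features of the Berger nonlinearity make the argument tractable: it is locally Lipschitz from $H_0^2$ into $H^{-1}$, and it splits as a multiple of $\Delta u$ with a scalar in $\|\nabla u\|^2$, so the difference $f(u_1)-f(u_2)$ admits a compact lower-order representative in $H^{-\sigma}$ for some $\sigma>0$. I would combine the multiplier $z_t$ with an equipartition multiplier (and, to absorb the delay term $q(z^t)$, integrate in time over intervals of length greater than $t^*$) to produce an observability-type inequality
\begin{equation*}
\|T_tx_1-T_tx_2\|_{\mathbf{H}}^2 \le Ce^{-\omega t}\|x_1-x_2\|_{\mathbf{H}}^2+C\sup_{[0,t]}\|u_1(s)-u_2(s)\|_{H^{2-\sigma}(\Omega)}^2.
\end{equation*}
The embedding $H^2\hookrightarrow H^{2-\sigma}$ being compact makes this precisely the quasi-stability inequality of Chueshov--Lasiecka. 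The delicate part is handling the delay history $q(z^t)$ and the non-gradient term $-Uz_x$ simultaneously: for the former I would use the Lipschitz bound for $q(\cdot)$ in $L^2(-t^*,0;H_0^2)$ from \cite{delay}, and for the latter a weighted multiplier that preserves the sign of the dominant damping produced by $-z_t$.

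Once the stabilizability inequality is in hand on $\mathcal{B}$, the conclusions follow from standard corollaries of quasi-stability theory (see Section \ref{quasisec} and \cite{springer,quasi}). Quasi-stability on the absorbing set implies asymptotic smoothness of $(T_t,\mathbf{H})$, which together with dissipativity produces a compact global attractor $\mathbf{A}\subset\mathcal{B}$. The same inequality, applied to pairs of trajectories on $\mathbf{A}$, yields finiteness of the fractal dimension by the abstract dimension theorem of Chueshov--Lasiecka. For the additional regularity, I would differentiate \eqref{reducedplate} in time: the differenced quantities $v=u_t$ satisfy an equation of the same structural form (the delay term being continuous in the time-shift), so the quasi-stability estimate transfers to time-derivatives and gives $u_t\in L^\infty(\R;H_0^2(\Omega))$ and $u_{tt}\in L^\infty(\R;L^2(\Omega))$ along any full trajectory on $\mathbf{A}$; bootstrapping back into \eqref{reducedplate} via elliptic regularity for $\Delta^2 u = p_0-u_{tt}-\dots$ then upgrades $u$ to $L^\infty(\R;H^4(\Omega)\cap H_0^2(\Omega))$, completing the theorem.
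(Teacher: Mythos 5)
Your overall route is the paper's: quasi-stability on a bounded, forward-invariant absorbing set, obtained from an observability-type inequality for the difference $z=u^1-u^2$ using the multipliers $z_t$ and $z$, the special structure of the Berger difference $f(u^1)-f(u^2)$ (Theorem \ref{nonest}), and time-integration over windows longer than $t^*$ to exploit the hidden compactness of the delay term; then Theorems \ref{doy} and \ref{dimsmooth} give the compact attractor, its finite fractal dimension, and the bounds $u_t\in L^{\infty}(\R;H_0^2(\Omega))$, $u_{tt}\in L^{\infty}(\R;L^2(\Omega))$, after which elliptic regularity for the clamped biharmonic operator applied to $\Delta^2u=p_0-u_{tt}-(1+k)u_t-f(u)-Uu_x+q(u^t)$ upgrades $u$ to $L^{\infty}(\R;H^4\cap H_0^2)$, exactly as you propose. (The paper obtains the velocity/acceleration bounds directly from the abstract quasi-stability theorem, whose proof runs through difference quotients of trajectories on $\bA$, rather than by formally differentiating the equation in time; your version is the same idea, just stated loosely.)

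The genuine gap is the dissipativity step. Testing \eqref{reducedplate} with $u_t$ alone cannot yield $\frac{d}{dt}E(t)+cE(t)\le C$: that multiplier produces damping only in $\|u_t\|^2$, no negative multiple of $\|\Delta u\|^2$ or $\Pi_*(u)$ ever appears on the left, and the delay pairing obeys only $|\langle q(u^t),u_t\rangle|\le \epsilon\|u_t\|^2+C_{\epsilon}t^*\int_{t-t^*}^t\|\Delta u(\tau)\|^2d\tau$, a history term that the $\|u_t\|^2$ dissipation cannot absorb pointwise in time. Likewise, Corollary \ref{globalbound} and the energy equality \eqref{eident} only give orbit bounds of size $C(\|y_0\|_Y)$, i.e.\ boundedness of trajectories, which for this non-gradient system does not produce an absorbing set. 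The paper repairs exactly these defects with the delayed Lyapunov function of Section \ref{ball}: the term $\nu\big(\langle u_t,u\rangle+\tfrac{1+k}{2}\|u\|^2\big)$ (the equipartition multiplier you reserve for the quasi-stability step) recovers the coercive part, the correction $-\langle q(u^t),u(t)\rangle$ permits the time derivative to be shifted off the delay term, and the single and double integrals of $\Pi_*$ over the delay window dominate the history contributions; only then does one arrive at \eqref{gronish} and hence a forward-invariant absorbing ball $\mathscr{B}$ of radius independent of the data. With that substitution in your step (i), the rest of your argument aligns with the paper's proof.
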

 This can be rephrased for the non-delay system $(S_t,Y)$, by taking the previous result with Theorem \ref{rewrite} and using projection onto the first two components of $\mathbf H$:
\begin{corollary}\label{th:main2*}
With the same hypotheses as Theorem \ref{maintheorem},  there exists a compact set $\mathscr{U} \subset H_0^2(\Omega) \times L^2(\Omega)$ of finite fractal dimension such that for any weak solution $(u,u_t;\phi,\phi_t)$ to (\ref{flowplate}) with
initial data
$
(u_0, u_1;\phi_0,\phi_1) \in Y
$ 
that have a localized flow component $supp(\phi_0),~supp(\phi_1) \subset K_{\rho_0}$ for some $\rho_0>0$: $$\lim_{t\to\infty} d_{Y_{pl}} \big( (u(t),u_t(t)),\mathscr U\big)=\lim_{t \to \infty}\inf_{(\nu_0,\nu_1) \in \mathscr U} \big( ||u(t)-\nu_0||_2^2+||u_t(t)-\nu_1||^2\big)=0.$$ We also have the additional regularity $\mathscr{U} \subset \big(H^4(\Omega)\cap H_0^2(\Omega)\big) \times H_0^2(\Omega)$.
\end{corollary}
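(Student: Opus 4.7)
The plan is to realize $\mathscr{U}$ as the image of the delay attractor $\bA$ from Theorem \ref{maintheorem} under the canonical projection onto the plate component; once the setup is in place, the corollary follows by soft continuity and contraction.

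Fix localized data $(u_0,u_1;\phi_0,\phi_1)\in Y$ with $\mathop{\rm supp}(\phi_0),\mathop{\rm supp}(\phi_1)\subset K_{\rho_0}$, and let $(u(t),u_t(t);\phi(t),\phi_t(t))$ denote the weak solution provided by Theorem \ref{nonlinearsolution}. Theorem \ref{rewrite} supplies a reduction time $t^{\#}=t^{\#}(\rho_0,U,\Omega)>0$ such that, for any fixed $t_0>t^{\#}$, the triple $x_0\equiv (u(t_0),u_t(t_0),u^{t_0})$ belongs to $\bH=H_0^2(\Omega)\times L^2(\Omega)\times L^2(-t^*,0;H_0^2(\Omega))$---the continuity $u\in C([0,T];H_0^2(\Omega))$ inherited from Theorem \ref{nonlinearsolution} ensures that the history segment $u^{t_0}$ sits in the function-valued $L^2$ slot---and such that the plate trajectory for $t\ge t_0$ coincides with the first two coordinates of $T_{t-t_0}(x_0)$. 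Define the linear, $1$-Lipschitz projection $\pi:\bH\to H_0^2(\Omega)\times L^2(\Omega)$ by $\pi(w,v,\eta)=(w,v)$ and set $\mathscr{U}\equiv \pi(\bA)$.

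Since $\pi$ is continuous and $\bA$ is $\bH$-compact, $\mathscr{U}$ is compact in $Y_{pl}$; since Lipschitz images do not increase fractal dimension, ${\rm dim}_f\mathscr{U}\le {\rm dim}_f\bA<\infty$. Attraction then follows by pushing the $\bH$-contraction through $\pi$:
\begin{equation*}
d_{Y_{pl}}\big((u(t),u_t(t)),\mathscr{U}\big) = d_{Y_{pl}}\big(\pi(T_{t-t_0}x_0),\pi(\bA)\big) \le d_{\bH}\big(T_{t-t_0}x_0,\bA\big) \longrightarrow 0 \quad (t\to\infty),
\end{equation*}
the limit being Theorem \ref{maintheorem}. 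The regularity statement is inherited pointwise: each point of $\bA$ lies on a bi-infinite trajectory with $u\in L^\infty(\R;H^4(\Omega)\cap H_0^2(\Omega))$ and $u_t\in L^\infty(\R;H_0^2(\Omega))$, so $\mathscr{U}\subset (H^4(\Omega)\cap H_0^2(\Omega))\times H_0^2(\Omega)$. There is no real obstacle past Theorem \ref{maintheorem}; the one bookkeeping point worth checking is the verification $x_0\in\bH$ at the moment of reduction, which is an immediate consequence of the well-posedness in Theorem \ref{nonlinearsolution}.
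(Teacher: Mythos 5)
Your proposal is correct and follows exactly the paper's route: the paper likewise invokes Theorem \ref{rewrite} to pass to $(T_t,\bH)$ for large times and takes $\mathscr U$ to be the projection of $\bA$ onto $Y_{pl}$, with compactness, finite fractal dimension, attraction, and regularity inherited just as you spell out. Your version merely makes explicit the bookkeeping (Lipschitz projection, $x_0\in\bH$ at the reduction time) that the paper leaves implicit.
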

Lastly, without imposing any damping, we have a generalized fractal exponential attractor:
\begin{theorem}[Generalized Fractal Exponential Attractor]\label{th:main2}
With the same hypotheses as Theorem \ref{maintheorem}, the evolution $(T_t,\mathbf H)$ has a generalized fractal exponential attractor ${A}_{\text{exp}} $ of finite dimension in the space $$\widetilde{\mathbf H}\equiv \widetilde{Y_{pl}}\times L^2(-t^*,0;L^2(\Omega)) = L^2(\Omega) \times H^{-2}(\Omega)\times L^2(-t^*,0;L^2(\Omega)).$$
\end{theorem}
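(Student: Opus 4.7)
The plan is to invoke the standard Chueshov--Lasiecka machinery for constructing a generalized fractal exponential attractor from a bounded absorbing set together with a quasi-stability estimate on that set; this is precisely the abstract framework described in Section \ref{tools}. Conceptually, the payoff is almost free once the ingredients used to prove Theorem \ref{maintheorem} are in hand, and the price one pays for exponential (rather than merely uniform) attraction is that the fractal dimension can only be controlled in a weaker topology, which accounts for the ``generalized'' in the statement and also explains the appearance of $\widetilde{\bH}$.

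\textbf{Step 1 (reuse of existing estimates).} Take the bounded, forward-invariant absorbing set $B \subset \bH$ for $(T_t,\bH)$ produced in Section \ref{techstuff} and already used in the proof of Theorem \ref{maintheorem}. On $B$ we have (from the same section) a quasi-stability estimate of the form
$$
\|T_t x_1 - T_t x_2\|_{\bH}^2 \le C e^{-\gamma t}\|x_1-x_2\|_{\bH}^2 + C\sup_{s\in [0,t]} \|T_s x_1 - T_s x_2\|_{\widetilde{\bH}}^2 ,
$$
where the lower-order seminorm on the right is controlled by the norm of $\widetilde{\bH} = L^2(\Omega)\times H^{-2}(\Omega)\times L^2(-t^*,0;L^2(\Omega))$, compactly embedded in $\bH$.

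\textbf{Step 2 (H\"older continuity in time).} Check that for every $x\in B$ the trajectory $t\mapsto T_t x$ is H\"older continuous with values in $\widetilde{\bH}$ on compact time intervals. For the delay slot $u^t = u(t+\cdot)$ this is immediate from the definition and the $\bH$-boundedness of $u_t$. For the plate velocity slot, we use \eqref{reducedplate} to bound $u_{tt}$ in $H^{-2}(\Omega)$ by the $L^2$-norms of $\Delta^2 u$, $f_B(u)$, $u_x$, $u_t$, and the delay forcing $q(u^t)$, all uniformly on $B$. For the displacement slot, boundedness of $u_t$ in $L^2$ yields Lipschitz continuity. Together these give the required H\"older regularity from compact time intervals into $\widetilde{\bH}$.

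\textbf{Step 3 (abstract construction) and main obstacle.} With the quasi-stability estimate of Step 1 and the H\"older property of Step 2, the abstract theorem of Chueshov--Lasiecka (cited in Section \ref{tools}) produces, at a discrete time $t_0 > 0$ chosen so that $Ce^{-\gamma t_0}<1$, a compact forward-invariant set $A_{\exp}^{d}\subset B$ with finite fractal dimension in $\widetilde{\bH}$ that attracts $B$ (and hence, by the absorbing property, every bounded set of $\bH$) at a geometric rate in the norm of $\bH$. One then thickens to continuous time by setting $A_{\exp} = \bigcup_{t\in[0,t_0]} T_t A_{\exp}^{d}$; H\"older continuity guarantees this union remains compact and of finite fractal dimension in $\widetilde{\bH}$. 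The substantive mathematical work is entirely in Section \ref{techstuff}; the only delicate step specific to the present theorem is confirming that the natural lower-order seminorm appearing in the quasi-stability estimate is in fact controlled by (and essentially equivalent to) the norm on $\widetilde{\bH}$, in particular on the $H^{-2}$ slot for the plate velocity and the $L^2$-in-time slot for the delay history. This is precisely the obstacle that prevents finite dimensionality in the full energy space $\bH$ and forces the exponential attractor to be ``generalized.''
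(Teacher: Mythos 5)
Your overall route is the paper's: quasi-stability on the absorbing ball $\mathscr B$ from Section \ref{techstuff}, Lipschitz/H\"older continuity of $t\mapsto T_t x$ into the weaker space $\widetilde{\bH}$ obtained by bounding $u_{tt}$ uniformly in $H^{-2}(\Omega)$ from the equation (the paper does this by lifting with $\mathcal A^{-1/2}$, $\mathcal A$ the clamped biharmonic), and then the abstract quasi-stability result, Theorem \ref{expattract*}. Your Step 2 is, modulo wording, exactly the paper's argument; note only that on $\mathscr B$ one has $\Delta^2 u$ bounded in $H^{-2}(\Omega)$ (equivalently $\mathcal A^{1/2}u$ bounded in $L^2(\Omega)$), not ``$\Delta^2 u$ bounded in $L^2$'', and that $q(u^t)$ is bounded in $L^2(\Omega)$ via \eqref{qnegest2}.

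That said, your Steps 1 and 3 contain two incorrect assertions which should be repaired, although neither is actually needed, so the argument survives once they are removed. First, the quasi-stability estimate established in Section \ref{techstuff} is \eqref{qs*}, whose compact term is $\sup_{\tau\in[0,t]}\|z(\tau)\|_{2-\eta}^2$; this is \emph{not} controlled by (let alone essentially equivalent to) the $\widetilde{\bH}$-norm, since $H^{2-\eta}(\Omega)$ is far stronger than the $L^2(\Omega)$ slot of $\widetilde{\bH}$, and the embedding direction is the reverse of what you wrote: $\bH$ embeds compactly into $\widetilde{\bH}$, not conversely. Theorem \ref{expattract*} requires no relation at all between the compact seminorm in the quasi-stability inequality and the auxiliary space $\widetilde{\bH}$; the latter enters only through the time-H\"older property. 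Second, and relatedly, the reason the attractor is only ``generalized'' is not a seminorm mismatch in the quasi-stability estimate: it is that trajectories of this hyperbolic-like flow are merely continuous (not H\"older) in time in the energy norm of $\bH$, so the fractal dimension of the time-thickened set $\bigcup_{t\in[0,t_0]}T_tA^{d}_{\mathrm{exp}}$ can only be estimated in a topology in which H\"older-in-time continuity is available---precisely what your Step 2 supplies for $\widetilde{\bH}$. With those corrections, your proposal coincides with the paper's proof.
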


	\subsection{Determining Functionals}
	
	Given a set of continuous, linear functionals $\mathscr L$ on $H_0^2(\Omega)$, the completeness defect of $\varepsilon_{\mathscr L}$ between $H_0^2(\Omega)$ and $L^2(\Omega)$ is defined by:
	\begin{equation}\label{defect}
	\varepsilon_{\mathscr L}(H_0^2(\Omega),L^2(\Omega)) \equiv  \sup_{\{||\Delta w|| \le 1\}}\big\{||w||_{L^2(\Omega)}~:~ l_j(w)=0~~~\forall~j=1,...,L\big\}.\end{equation}
	With this notion in hand, we can present our main theorem on determining functionals. 
	\begin{theorem}[Finite Number of Determining Functionals]\label{defecttheorem} Take the hypotheses from Theorem \ref{maintheorem} and consider $(T_t,\mathbf H)$ as above. Then there exists a number $\varepsilon_*>0$ such that if $\mathscr L$ is any set of functionals on $H_0^2(\Omega)$ with $\varepsilon_{\mathscr L}(H_0^2(\Omega),L^2(\Omega)) \le \varepsilon_*$, then $\mathscr L$ is a determining set of functionals for $(T_t,\bH)$. 
	\end{theorem}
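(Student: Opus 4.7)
The plan is to combine the quasi-stability estimate on the absorbing ball for $(T_t,\bH)$ --- which is established in Section \ref{techstuff} as the engine driving Theorems \ref{maintheorem} and \ref{th:main2} --- with the dual form of the completeness defect \eqref{defect}, and then close the argument with a limsup in $t$. Concretely, the stabilizability estimate provides constants $c>0$ and a function $a(t)\to 0$ such that any pair of trajectories $T_tx_i$ lying in the absorbing ball $B_R\subset\bH$ satisfies
\begin{equation}\label{qs-plan}
\|T_tx_1 - T_tx_2\|^2_{\bH} \le a(t)\|x_1 - x_2\|^2_{\bH} + c\sup_{\tau\in[-t^*,t]}\|u_1(\tau) - u_2(\tau)\|^2_{L^2(\Omega)},
\end{equation}
where $u_i(\tau)$ denotes the plate component of $T_\tau x_i$. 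Since every trajectory enters $B_R$ in finite time, I may assume both trajectories lie in $B_R$ for all $t \geq 0$.

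Next I would invoke the standard dual reformulation of \eqref{defect} (a projection argument, cf.\ \cite{springer,chuey}): for some constant $C_{\mathscr L}$ depending only on $\mathscr L$,
\begin{equation}\label{cd-plan}
\|w\|_{L^2(\Omega)} \le \varepsilon_{\mathscr L}\|\Delta w\|_{L^2(\Omega)} + C_{\mathscr L}\max_j|l_j(w)|, \qquad \forall\, w\in H_0^2(\Omega).
\end{equation}
Applying \eqref{cd-plan} to $w = u_1(\tau)-u_2(\tau)$ and using $\|\Delta(u_1(\tau)-u_2(\tau))\|^2 \le 2\|T_\tau x_1 - T_\tau x_2\|^2_{\bH}$ when substituted into \eqref{qs-plan} yields
\[
\|T_tx_1 - T_tx_2\|^2_{\bH} \le a(t)\|x_1 - x_2\|^2_{\bH} + 4c\varepsilon_{\mathscr L}^2\sup_{\tau\in[-t^*,t]}\|T_\tau x_1 - T_\tau x_2\|^2_{\bH} + 2cC_{\mathscr L}^2\sup_{\tau\in[-t^*,t]}\max_j|l_j(u_1(\tau)-u_2(\tau))|^2.
\]

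Fix two trajectories with $|l_j(T_tx_1)-l_j(T_tx_2)|\to 0$ as $t\to\infty$ for every $j$, and set $\overline d = \limsup_{t\to\infty}\|T_tx_1 - T_tx_2\|^2_{\bH}$. For $\epsilon>0$, choose $t_0$ so that $\|T_\tau x_1 - T_\tau x_2\|^2_{\bH} \le \overline d + \epsilon$ for all $\tau\ge t_0-t^*$; shift \eqref{qs-plan} to initial time $t_0$ and take $\limsup_{t\to\infty}$. The term $a(t-t_0)\|T_{t_0}x_1-T_{t_0}x_2\|^2$ vanishes, the functional supremum vanishes by hypothesis once $t_0$ is taken sufficiently large, and the quasi-stability term yields $\overline d \le 4c\varepsilon_{\mathscr L}^2(\overline d+\epsilon)$. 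Sending $\epsilon\to 0$, any $\varepsilon_* < 1/(2\sqrt c)$ forces $\overline d = 0$, which is the claim. The main anticipated obstacle is matching the compact seminorm produced by the stabilizability analysis in Section \ref{techstuff} to the specific $L^2(\Omega)$ norm that underlies \eqref{defect}: if the estimate is naturally formulated in $H^{2-\eta}(\Omega)$ (as is typical when Lemma \ref{l:epsilon} is invoked for Berger's nonlinearity), I would either interpolate against the absorbing-ball bound to recover an $L^2(\Omega)$ tail, or, equivalently, reformulate the completeness defect between $H_0^2(\Omega)$ and $H^{2-\eta}(\Omega)$. The delay segment $\tau\in[-t^*,0]$ in \eqref{qs-plan} causes no extra difficulty, since after one delay period it is absorbed into the past trajectory already treated by the limsup.
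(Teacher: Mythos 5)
Your proposal is correct in substance and runs on the same engine as the paper's proof: the quasi-stability estimate \eqref{qs*} on the absorbing ball combined with the dual form of the completeness defect (your inequality \eqref{cd-plan} is exactly the paper's Lemma \ref{doya}, proved via an orthonormal system for $\mathscr L$). The obstacle you flag yourself---that the compact tail in \eqref{qs*} is the $H^{2-\eta}(\Omega)$ seminorm rather than $L^2(\Omega)$---is real, and your second proposed fix is precisely what the paper does: work with $\varepsilon_{\mathscr L,2-\eta}$ throughout and then pass to $\varepsilon_{\mathscr L,0}$ by the interpolation bound $[\varepsilon_{\mathscr L,2-\eta}]^{2/\eta}\le C(\eta)\,\varepsilon_{\mathscr L,0}$ (the paper's \eqref{control}); your first fix (interpolating $\|z\|_{2-\eta}\le\|z\|_{0}^{\eta/2}\|z\|_{2}^{1-\eta/2}$ against the absorbing-ball bound) also closes, provided you keep the resulting small factor $\varepsilon_{\mathscr L,0}^{\eta}$ multiplying $\|z\|_2^2$ rather than a sublinear power of $\sup\|z\|$. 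Where you genuinely diverge is the endgame: the paper bounds $\sup_{t\le s\le t+\tau}\|z(s)\|_{2-\eta}^2$ by the value at time $t$ using the local Lipschitz estimate \eqref{dynsysest} (cost: a factor $e^{a_R\tau}$), obtains a one-step contraction with $\beta=C[(1+\delta)\varepsilon_{\mathscr L,2-\eta}e^{a_R\tau}+e^{-\sigma\tau}]<1$ by first fixing $\tau$ large and then $\varepsilon$ small, and iterates over intervals of length $\tau$; your limsup absorption argument dispenses with \eqref{dynsysest} and the iteration altogether, and yields the cleaner threshold $\varepsilon_*\sim C_q^{-1/2}$ depending only on the quasi-stability compactness constant, at the price of invoking the (finite) limsup of the trajectory difference, which is available since both trajectories stay in the absorbing ball. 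Both routes are valid; yours is marginally more economical, the paper's is the standard short-trajectory-style iteration.
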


	There is one situation where the completeness defect $\varepsilon_{\mathscr L}(H_0^2(\Omega),L^2(\Omega))$ can be estimated straightforwardly. For a given set of functionals $\mathscr L$ on $H_0^2(\Omega)$ and a given set of linearly independent functions $\{\phi_j\}_{j=1}^N\subseteq H_0^2(\Omega)$, define the interpolation operator $ R_{\mathscr L}:H_0^2(\Omega)\to H_0^2(\Omega)$, given by the formula
	$$R_{\mathscr L}(w)=\sum_{j=1}^N l_j(w)\phi_j.$$ 
	We say $R_{\mathscr L}$ approximates ``well" when there exists $C,\alpha>0$ such that
	$$||w-R_{\mathscr L}w||_{L^2(\Omega)} \le Ch^{\alpha},~~\forall ||\Delta w||\le 1$$
	for any $h(N)>0$ sufficiently small. It is immediate, then, that $\varepsilon_{\mathscr L} \le Ch^{\alpha}$; see \cite[Section 3.3]{quasi} for details.
	
	We now provide some concrete examples from the discussion above; this discussion is found in \cite{ch-0}, and see also \cite{quasi}. These examples, in conjunction with Theorem \ref{defecttheorem}, show that the structural dynamics have {\bf finite} determining {\em nodes, modes}, and {\em local volume averages}. Below, $c>0$ does not depend on $N$.
	\vskip.2cm
	\noindent{\bf Nodes}: Let $\mathcal T^h$ be a triangulation of $\Omega$ with triangles of side-length less than $h$; let $\{x_j~:~j=1,...,N_h\}$ be all vertices in $\mathcal T^h$. Then the set 
	$$\mathscr L=\{ l_j ~:~l_j(w)=w(x_j),~~j=1,...,N_h\}$$
	has completeness defect $\varepsilon_{\mathscr L}(H_0^2(\Omega),L^2(\Omega)) \le ch^2$.
	\vskip.2cm
	\noindent{\bf Modes}: Let $\{e_j\}$ be the eigenfunctions of the clamped biharmonic operator $\Delta^2$ acting on $H_0^2(\Omega)$. Then 
	the set
	$$\mathscr L=\{ l_j ~:~l_j(w)=(w,e_j)_{L^2(\Omega)},~~j=1,...,N\}$$
	has completeness defect $\varepsilon_{\mathscr L}(H_0^2(\Omega),L^2(\Omega)) \le c/N$.
	\vskip.2cm
	\noindent{\bf Averages}: Assume that $\lambda \in L^{\infty}(\mathbb R^2)$ with compact support and $\int_{\mathbb R^2}\lambda(\mathbf x) d\mathbf x =1$. For $h>0$, define
	$$\mathscr L=\{ l_j ~:~l_j(w)=\dfrac{1}{h^2}\int_{\Omega}w(\mathbf x)\lambda(\mathbf x/h-j)d\mathbf x,~~j=(j_1,j_2) \in \mathcal J\},$$
	where $\mathcal J\!\equiv\!\{(j_1,j_2)\!\in\!\mathbb Z^2\!:(j_1h,j_2h)\!\in\!\Omega \}$
	has completeness defect $\varepsilon_{\mathscr L}(H_0^2(\Omega),L^2(\Omega))\!\le ch^2$.

	\subsection{Discussion of Literature in Relation to Results}
	We begin by noting that a majority of works on flow-plate interactions consider the plate to be of scalar von Karman type. That nonlinearity is more formidable, and we direct the reader to the monograph \cite{springer} for discussion, as well as the papers \cite{supersonic,delay,HLW} for comparative discussion between von Karman and Berger dynamics plates. We consider $f=f_B$ primarily to clarify exposition, since the scalar von Karman system has additional technicalities that cloud the discussion; {\em our focus here is on the essential nature of finite dimensionality} in the system. We assert that {\em many of the results below hold for the von Karman system, perhaps adjusting the size and type of damping mechanism}---see \cite{fereisel,survey2}.
	
	We know that 	the nature of (in)stability for the dynamics depend critically on structural boundary conditions, and the flow parameter $U$. To expound the role of $U$, we note that the flow energy in \eqref{flowenergy} degenerates as $U \mapsto 1$; this necessitates treating the subsonic \cite{webster,jadea12} and supersonic cases independently \cite{supersonic}.  Long-time behavior results for the full flow-plate system with $U>1$ seem untenable, owing to the lack of a ``good" energy identity. On the other hand, invoking the reduction result Theorem \ref{rewrite}---only possible after establishing the results in \cite{supersonic}---allow one to consider all values of $U \neq 1$, so long as only the plate dynamics are considered. 

The earliest mathematical approaches to the flow-plate dynamics at hand invoke an ad hoc, piston-theoretic \cite{ch-0,springer} simplification ($q\equiv 0$ in \eqref{reducedplate}), or operate directly on the reduced system \cite{oldchueshov1,oldchueshov2} without first establishing the reduction in Theorem \ref{rewrite}. These works often provide well-posedness proofs, as well as constructions of compact global attractors, albeit without the quasi-stability technology utilized here. The more recent work \cite{HLW} investigates attractors and exponential attractors for a piston-theoretic model (both von Karman and Berger), using the modern quasi-stability framework.

Next, we note that all early works on the flow-plate dynamics or reduced dynamics typically utilize velocity regularization, i.e., some mechanism to boost the plate velocity $u_t \in L^2(\Omega) \to H^1(\Omega)$. Such an improvement has many benefits, discussed in more detail below. The most common means of doing this is through the inclusion of rotational inertia effects $\alpha>0$ (the so called {\em Rayleigh correction}):
	\begin{equation}\label{inertiaeq}
	(1-\alpha \Delta)u_{tt}+k(1-\alpha \Delta)u_t+\Delta^2u+f(u)=p(x,t)
	\end{equation}
	The damping mechanism above has been adjusted to reflect the strength of the inertial term (see \cite{HTW}). 
	
	The papers \cite{ryz,ryz2} do not consider inertia, but invoke plate thermoelasticity \cite{LT}, providing velocity regularization and dissipative effects.  In general, \cite{springer} provides a rather complete review for the above scenarios when $f(u)$ is given by the Berger or von Karman nonlinearity. {\em We stress that, even with beneficial thermal coupling or $\alpha>0$, flow-plate problems are still challenging due to the coupling, and reduced, delayed dynamics are challenging due to the intrinsically non-gradient character.}

\subsubsection{Previous Results with Velocity Regularization}\label{regularization}

	Well-posedness for flow-plate dynamics \eqref{flowplate} in past literature (before 2010) involved one of the aforementioned regularizations, resulting in $w_t \in H^1$. Here, one is still faced with the low regularity of traces (the failure of the Lopatinski condition \cite{miyatake}) for the Neumann wave equation. The $\alpha>0$ well-posedness method in \cite{LBC96,b-c-1,springer} relies on sharp microlocal estimates for the wave equation driven by $[w_t + U w_x]\in H^{1}(\Omega)$ Neumann data, yielding $r_{\Omega} tr[\phi_t] \in L^2\big(0,T; H^{-1/2}(\Omega)\big)$ \cite{springer}.  With an explicit 3-D wave solver, and a Galerkin approximation, one constructs a solution via a fixed point argument.  The method fundamentally decouples flow and plate dynamics, and limit passage on approximate solutions obtains through compactness of the Neumann lift, available when $\alpha>0$.

The primary physical model \cite{ciarlet,dowell1,lagnese} takes $\alpha=0$, but von Karman type nonlinearities {\em and} interactive flow terms do {\em not act compactly} in this case.  Moreover, for long-time behavior studies with $\alpha>0$, plate damping {must be tailored to}~ $-\alpha\partial_x^2w_{tt}$ to effectively control of kinetic energies. On the other hand, {\em frictional damping}, $kw_t$, is of the same strength as reduced flow damping in \eqref{reducedplate}, and thus there is a disparity between including rotational inertia $\alpha>0$ and aerodynamic and natural damping. The long-time behavior results in \cite{springer} are the most recent for the system \eqref{flowplate} with $\alpha>0$; these results include attractors, determining functionals, and subsonic convergence to equilibrium with imposed damping ($k>0$ in \eqref{inertiaeq}). In the references \cite{ryz,ryz2} $U<1$, a thermoelastic panel stabilizes {\em without additional mechanical damping}. {\em Again, both scenarios  rely on compactness of the boundary-to-flow map, conspicuously absent when $\alpha=0$.}

	\subsubsection{Previous Results for \eqref{flowplate} without Regularizations}
	We now turn to previous results for the system as presented in \eqref{flowplate}, with no regularizations. 
	
	In \cite{webster}, well-posedness of the $\alpha=0$ panel \eqref{flowplate} was established for $U<1$ using semigroup methods to treat the entire system. The approach is distinct from \cite{b-c-1,LBC96}, since corresponding component-wise estimates there are singular as $\alpha\searrow 0$. An alternative proof was given in \cite{jadea12}, where a viscosity approach through an absorbing boundary condition obtains solutions for \eqref{flowplate}. With the established viability of semigroup techniques for subsonic flows, the supersonic $U>1$ problem was recast in the abstract framework for the challenging $\alpha=0$, $U>1$ case. The reference \cite{supersonic} provides the well-posedness result, critically utilizing hyperbolic theory, where traces behave better than the standard theory predicts \cite{miyatake}. This recent resolution of well-posedness for all $U\neq 1$ with $\alpha=0$ opened the door to long-time behavior studies for \eqref{flowplate}---in particular, all of the Theorems \ref{maintheorem}--\ref{defecttheorem} here.

With the reduction result, Theorem \ref{rewrite}, established for the first time in \cite{delay} (on the strength of the well-posedness  above), one can also study end behaviors of the structural component of the system {\em without imposing any mechanical damping}. The paper \cite{delay} considers the reduced system in \eqref{genplatedelay} (with $\alpha=0$), with a nonlinearity of Berger, von Karman, or Kirchhoff type. The analysis provides the construction of compact global attractors that are smooth and finite dimensional. The proof we provide here is fundamentally different: by focusing on the Berger nonlinearity, we do not need to utilize the compactness of the attractor in order to obtain the quasi-stability estimate. 

Lastly, with the strong stabilization results for \eqref{flowplate} with $U<1$ (\cite{springer} for $\alpha>0$ and \cite{ryz,ryz2} for included thermal effects), the papers \cite{conequil1,conequil2} provide analogous results for Berger and von Karman plates. (Precise results depend on the nonlinearity in force, and the size of the damping $k>0$.)

\section{Technical Tools}\label{tools}
	
	\subsection{Dissipative Dynamical Systems}
	
		We recall notions and results from the theory of dissipative dynamical systems
 (see, e.g., \cite{Babin-Vishik,ch-0,lad,springer}).
 
We say
$(S_t,H)$ is \textit{asymptotically smooth} if for any
bounded, forward invariant set $D$ there exists a compact set $K
\subset \overline{D}$ such that $~
\lim_{t\to+\infty}d_{H}\{S_t D|K\}=0$. 
A closed set $B \subset H$ is \textit{absorbing} if for any bounded set $D \subset H$ there exists a $t_0(D)$ such that $S_tD \subset B$ for all $t > t_0$. If $(S_t,H)$ has a bounded absorbing set it is said to be \textit{ultimately dissipative}.

We will use a key theorem from \cite[Chapter 7]{springer} to establish the attractor and its characterization. 
\begin{theorem}
\label{dissmooth} A dissipative and asymptotically smooth dynamical system $(S_t,H)$ has a unique compact global attractor $\textbf{A} \subset H$ that is connected,  characterized by the set of all bounded, full trajectories.
\end{theorem}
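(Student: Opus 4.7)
The plan is to follow the classical construction of the global attractor as the omega-limit set of an absorbing ball, using asymptotic smoothness to secure compactness, and then to deduce invariance, connectedness, and the trajectory characterization in turn.

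First, I would upgrade the bounded absorbing set $B_0$ (given by dissipativity) to a bounded, closed, forward invariant absorbing set $B$, by setting $B = \overline{\bigcup_{t \ge t_0} S_t B_0}$ with $t_0$ chosen so that $S_t B_0 \subseteq B_0$ for all $t \ge t_0$. Asymptotic smoothness applied to $B$ yields a compact $K \subseteq \overline{B}$ with $d_H(S_t B \mid K) \to 0$. I then set $\bA := \omega(B) = \bigcap_{s \ge 0} \overline{\bigcup_{t \ge s} S_t B}$ and verify nonemptiness, compactness, and positive invariance directly from this definition, using that $\bA \subseteq K$. Uniform attraction of an arbitrary bounded set $D$ then follows because $S_{t_1} D \subseteq B$ for some $t_1 = t_1(D)$, after which $d_H(S_{t+t_1} D \mid \bA) \le d_H(S_t B \mid \bA) \to 0$.

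The more delicate step is full (not just positive) invariance $\bA \subseteq S_t \bA$. Given $p \in \bA$, I would write $p = \lim_n S_{t_n} x_n$ with $x_n \in B$, $t_n \to \infty$, and observe that $\{S_{t_n - t} x_n\} \subseteq B$ for $n$ large; asymptotic smoothness supplies a convergent subsequence with limit $q \in \bA$, and continuity of $S_t$ gives $S_t q = p$. This is where asymptotic smoothness is used in an essential, non-cosmetic way, and I expect it to be the main technical hurdle. Maximality (and hence uniqueness) then follows: any compact invariant set $\bA'$ that attracts bounded sets must satisfy $\bA' = \omega(\bA') \subseteq \bA$ by its own invariance, and $\bA \subseteq \bA'$ by its attracting property.

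For connectedness, I would argue by contradiction: if $\bA$ were the disjoint union of nonempty compact subsets $A_1, A_2$, separating them by disjoint open neighborhoods $U_1, U_2$ and exploiting uniform attraction together with the continuity of $t \mapsto S_t x$ on $[0,\infty)$ (so that each single-point orbit is path-connected inside any connected set containing $B$) forces a contradiction once $t$ is large enough that $S_t B \subseteq U_1 \cup U_2$. Finally, the trajectory characterization is essentially formal at this stage: invariance allows one to extend any $p \in \bA$ backward inside $\bA$, yielding a bounded full trajectory through $p$; conversely, if $\gamma : \R \to H$ is any bounded full trajectory with $D := \gamma(\R)$, then for each fixed $t$ and every $s \ge 0$, $\gamma(t) = S_s \gamma(t-s) \in S_s D$, and uniform attraction of $D$ forces $\gamma(t) \in \bA$.
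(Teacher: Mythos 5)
The paper itself does not prove Theorem \ref{dissmooth}: it is quoted directly from Chueshov and Lasiecka \cite{springer} (Chapter 7), and the proof there is precisely the classical construction you outline --- upgrade the absorbing set, take $\bA=\omega(B)$, use asymptotic smoothness for compactness and backward invariance, then deduce uniqueness and the characterization by bounded full trajectories. Your steps for existence, invariance, attraction (the implicit claim $d_H\{S_tB|\bA\}\to 0$ follows by the usual contradiction argument through the compact set $K$), uniqueness, and the trajectory characterization are all sound as sketched.

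The one step that does not work as written is connectedness. Knowing that $S_tB\subseteq U_1\cup U_2$ for large $t$ yields no contradiction by itself, since $B$, and hence $S_tB$, need not be connected; and the path-connectedness of individual orbits does not help, because a single orbit may eventually remain in $U_1$ alone --- nothing forces any one trajectory to meet both neighborhoods. The standard repair is to apply $S_t$ to a fixed \emph{connected} bounded set $C$ containing $\bA$ (a ball containing $B$, or the closed convex hull of $\bA$, which is bounded since $\bA$ is compact). Then $S_tC$ is connected as the continuous image of a connected set, contains $\bA=S_t\bA\subseteq S_tC$ by invariance, hence meets both $U_1$ and $U_2$, and by uniform attraction of the bounded set $C$ it lies inside $U_1\cup U_2$ once $t$ is large (taking $U_i$ to be small $\epsilon$-neighborhoods of the two pieces); this contradicts connectedness of $S_tC$. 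With that adjustment your argument is complete and coincides with the proof in the cited reference.
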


	\subsection{Quasi-stability}\label{quasisec}
	
Quasi-stability is the primary tool in our long-time behavior analysis.  A quasi-stable dynamical system is one where {\em the difference of two trajectories can be decomposed} into uniformly stable and compact parts, with controlled scaling of powers. The theory of quasi-stable dynamical systems has been developed thoroughly in recent years by Chueshov and Lasiecka \cite{quasi,springer}, including more general definitions \cite{quasi} than what we present and use below. 

Informally, we mention that: 
\begin{itemize}
\item Having the quasi-stability property on the global attractor $\bA$ yields additional smoothness and finite dimensionality of $\bA$. This follows from the so called ``squeezing property" and one of Ladyzhenskaya's theorems (see \cite[Theorems 7.3.2 and 7.3.3]{springer}).
\item Having the quasi-stability estimate on an absorbing ball implies the existence of an exponentially attracting set; uniform in time H\"{o}lder continuity (in some topology) yields finite dimensionality of this attracting set in said topology. 
\end{itemize}

We now proceed more formally.
\begin{condition}\label{secondorder} Consider second order (in time) dynamics $(S_t,H)$ where $H=X \times Z$ with $X,Z$ Hilbert, and $X$ compactly embedded into $Z$.  Further, suppose $y= (x,z) \in H$ with $S_ty =(x(t),x_t(t))$ where the function $x \in C(\mathbb R_+,X)\cap C^1(\mathbb R_+,Z)$. 
\end{condition}
 Condition \ref{secondorder} restricts our attention to second order, hyperbolic-like evolutions.
\begin{condition}\label{locallylip} Suppose the evolution operator $S_t: H \to H$ is locally Lipschitz, with Lipschitz constant $a(t)\in L^{\infty}_{loc}([0,\infty))$:
\begin{equation}\label{specquasi}
||S_ty_1-S_ty_2||_H^2 \le a(t)||y_1-y_2||_H^2.
\end{equation}
\end{condition}
\begin{definition}\label{quasidef}
With Conditions \ref{secondorder} and \ref{locallylip} in force, suppose that the dynamics $(S_t,H)$ admit the following estimate for $y_1,y_2 \in B \subset H$:
\begin{equation}\label{specquasi*}
||S_ty_1-S_ty_2||_H^2 \le e^{-\gamma t}||y_1-y_2||_H^2+C_q\sup_{\tau \in [0,t]} ||x_1-x_2||^2_{Z_*}, ~~\text{ for some }~~\gamma, C_q>0,
\end{equation} where $Z \subseteq Z_* \subset X$, and the last embedding is compact. Then we say that $(S_t,H)$ is {\em quasi-stable} on $B$.
\end{definition}
\begin{remark}\label{genquas} As mentioned above, the definition of quasi-stability in the key references \cite{quasi,springer} is much more general; specifically, the estimate in \eqref{specquasi*} can be replaced with: \begin{equation}\label{genquaseq}
||S_ty_1-S_ty_2||_H^2 \le b(t)||y_1-y_2||_H^2+c(t)\sup_{\tau \in [0,t]} [\mu_H(S_ty_1-S_ty_2)]^2,  \end{equation} 
where: (i) $b(\cdot)$ and $c(\cdot)$ are nonnegative scalar functions on $\mathbb R_+$ such that $c(t)$ is locally bounded on $[0,\infty)$ and $b \in L^1(\mathbb R_+)$ and $\ds \lim_{t \to \infty} b(t) = 0$; (ii) $\mu_H$ is a compact seminorm on $H$. 
\end{remark}

We now run through a handful of consequences of the type of quasi-stability described by Definition \ref{quasidef} above for dynamical systems $(S_t,H)$ satisfying Condition \ref{secondorder}
\cite[Proposition 7.9.4]{springer}.
\begin{theorem}\label{doy}
If a dynamical system $(S_t,H)$ satisfying Conditions  \ref{secondorder} and \ref{locallylip} is quasi-stable on every bounded, forward invariant set $ B \subset H$, then $(S_t,H)$ is asymptotically smooth. Thus, if in addition, $(S_t,H)$ is ultimately dissipative, then by Theorem \ref{dissmooth} there exists a compact global attractor $\bA \subset \subset H$. 
\end{theorem}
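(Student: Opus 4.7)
The plan is to verify asymptotic smoothness directly; once that is in hand, combining with ultimate dissipativity and invoking Theorem \ref{dissmooth} delivers the compact attractor. Fix a bounded, forward invariant set $D\subset H$. Since $S_tD = S_{t-s}(S_sD)\subseteq S_{t-s}D$ for $t\ge s$, the function $t\mapsto \alpha(S_tD)$ is non-increasing, where $\alpha$ denotes the Kuratowski measure of non-compactness. It therefore suffices to produce, for each $\epsilon>0$, a time $T=T(\epsilon)$ with $\alpha(S_TD)<\epsilon$; this forces $\alpha(S_tD)\to 0$, and hence $\omega(D)$ is nonempty, compact, and attracts $D$.

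Given $\epsilon>0$, choose $T$ so large that $e^{-\gamma T}(\mathrm{diam}_H D)^2 \le \epsilon^2$. Then \eqref{specquasi*} on $D$ gives, for any $y_1,y_2\in D$,
\[
\|S_Ty_1 - S_Ty_2\|_H^2 \;\le\; \epsilon^2 \;+\; C_q\sup_{\tau\in[0,T]}\|x_1(\tau)-x_2(\tau)\|_{Z_*}^2,
\]
with $x_i(\tau)$ the displacement component of $S_\tau y_i$. The task thus reduces to finitely covering $D$ by subsets on which the seminorm $(y_1,y_2)\mapsto \sup_{\tau\in[0,T]}\|x_1(\tau)-x_2(\tau)\|_{Z_*}$ is uniformly less than $\epsilon/\sqrt{C_q}$.

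I would accomplish this by viewing the trajectory map $\Phi_T : y\mapsto x(\cdot;y)|_{[0,T]}$ as sending $D$ into $C([0,T];Z_*)$, and proving $\Phi_T(D)$ is totally bounded there via Arzel\`a-Ascoli. Pointwise relative compactness at each $\tau$: forward invariance gives $\{x(\tau;y):y\in D\}$ bounded in $X$, hence relatively compact in $Z_*$ via the compact embedding inherent in the quasi-stability framework. Equicontinuity: Condition \ref{secondorder} provides $x\in C^1(\mathbb R_+;Z)$, and forward invariance in $H=X\times Z$ yields the uniform bound $\sup_{y\in D,\,\tau\in[0,T]}\|x_t(\tau;y)\|_Z \le \sup_{z\in D}\|z\|_H<\infty$, so the trajectories are uniformly Lipschitz from $[0,T]$ into $Z$, hence into $Z_*$. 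Arzel\`a-Ascoli then furnishes a finite $C([0,T];Z_*)$-cover of $\Phi_T(D)$, which pulls back to a finite cover $D=D_1\cup\cdots\cup D_N$ achieving the required seminorm control. Substituting back yields $\mathrm{diam}_H(S_TD_i)\le \sqrt{2}\,\epsilon$ for each $i$, so $\alpha(S_TD)\le 2\sqrt{2}\,\epsilon$, closing the argument.

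The one genuine obstacle is the equicontinuity step, where forward invariance must be promoted into a uniform-in-$y$ bound on the velocity in $Z$. This is precisely where the second-order product structure $H=X\times Z$ of Condition \ref{secondorder} is essential: it makes the identification $\|x_t(\tau;y)\|_Z \le \|S_\tau y\|_H$ automatic, bypassing any separate a priori estimate beyond forward invariance of $D$ itself. Every other ingredient (contraction, Arzel\`a-Ascoli, compact embedding) is textbook once this uniform velocity bound is secured.
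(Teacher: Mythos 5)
Your overall strategy is sound and is essentially the standard argument behind this result (the paper itself does not prove Theorem \ref{doy}, but defers to Chueshov--Lasiecka, Proposition 7.9.4): use the quasi-stability inequality to make the difference of two trajectories small at time $T$ up to the lower-order sup term, kill that term by compactness of the trajectory map into $C([0,T];Z_*)$, and conclude asymptotic smoothness via a measure-of-noncompactness (equivalently, contractive-function) criterion. The reduction to making $\alpha(S_TD)$ small, the choice of $T$, the use of forward invariance to get uniform bounds, and the covering bookkeeping are all fine (in fact your cover gives $\alpha(S_TD)\le\sqrt2\,\epsilon$ directly; the factor $2\sqrt2$ is harmless).

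The one step that does not survive scrutiny as written is the equicontinuity claim ``uniformly Lipschitz from $[0,T]$ into $Z$, hence into $Z_*$.'' That inference needs a continuous embedding $Z\hookrightarrow Z_*$. The displayed chain in Definition \ref{quasidef}, $Z\subseteq Z_*\subset X$, would give you this, but it is evidently a misprint: combined with Condition \ref{secondorder} ($X$ compactly embedded in $Z$) it would force $X$ to be finite dimensional, and in the paper's own application one has $X=H_0^2(\Omega)$, $Z_*=H^{2-\eta}(\Omega)$, $Z=L^2(\Omega)$, so the intended chain is $X\subset\subset Z_*\subseteq Z$. In that setting $Z_*$ carries the \emph{stronger} norm, and Lipschitz continuity in $Z$ does not transfer to $Z_*$, so your equicontinuity step fails as stated. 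It is repairable with exactly the ingredients you already have: the trajectories are uniformly bounded in $X$ and uniformly Lipschitz into $Z$, so Ehrling's lemma (or direct interpolation, $\|w\|_{2-\eta}\le \|w\|_{2}^{1-\eta/2}\|w\|_{0}^{\eta/2}$ in the concrete case) gives, for every $\delta>0$, $\|x(t)-x(s)\|_{Z_*}\le 2R\delta + C_\delta L|t-s|$, which is uniform equicontinuity in $Z_*$ --- enough for Arzel\`a--Ascoli. Note that your pointwise-compactness step already invokes the correct compact embedding $X\subset\subset Z_*$, so this repair introduces nothing new; with it, the proof closes.
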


The theorems in \cite[Theorem 7.9.6 and 7.9.8]{springer} provide the following result concerning improved properties of the attractor $\bA$ if the quasi-stability estimate can be shown {\em on} $\bA$. If Theorem \ref{doy} is used to construct the attractor, then Theorem \ref{dimsmooth} follows immediately; this is not always possible \cite{delay,HLW}.
\begin{theorem}\label{dimsmooth}
If a dynamical system $(S_t,H)$ satisfying Conditions  \ref{secondorder} and \ref{locallylip} possesses a compact global attractor $ \bA \subset \subset H$, and is quasi-stable on $\bA$, then $\bA$ has finite fractal dimension in $H$, i.e., $\text{dim}_f^H\bA <+\infty$. Moreover, any full trajectory $\{(x(t),x_t(t))~:~t \in \mathbb R\} \subset \bA$ has the property that
$$x_t \in L^{\infty}(\mathbb R;X)\cap C(\mathbb R;Z);~~x_{tt} \in L^{\infty}(\mathbb R;Z),$$ with bound
$$||x_t(t)||^2_X+||x_{tt}(t)||_Z^2 \le C,$$
where the constant $C$ above depends on the ``compactness constant" $C_q$ in \eqref{specquasi*}.
\end{theorem}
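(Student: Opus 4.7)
The plan is to prove finite fractal dimension via a Ladyzhenskaya-style discrete squeezing argument at a single large time, and to extract the time regularity of trajectories on $\bA$ by applying the quasi-stability estimate to time-shifted pairs along a full trajectory and then letting the ``history time'' tend to infinity.

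\textbf{Finite dimensionality of $\bA$.} First I would choose $t^{*}>0$ large enough that $q:=e^{-\gamma t^{*}/2}<1$. Taking square roots in \eqref{specquasi*} and using $\sqrt{a+b}\le \sqrt{a}+\sqrt{b}$ yields
\[
\|S_{t^{*}}y_1-S_{t^{*}}y_2\|_H \le q\,\|y_1-y_2\|_H + n(y_1,y_2),\qquad y_i\in \bA,
\]
where $n(y_1,y_2):=\sqrt{C_q}\sup_{\tau\in[0,t^{*}]}\|x_1(\tau)-x_2(\tau)\|_{Z_*}$. The decisive step is to verify that $n$ is a compact seminorm on $\bA\times\bA$: any sequence $\{y_n\}\subset \bA$ produces trajectories $x_n(\cdot)$ uniformly bounded in $X$ with $t$-derivatives uniformly bounded in $Z$ (via Condition~\ref{secondorder} and $H$-boundedness of $\bA$), so Arzel\`a--Ascoli together with the compact embedding into $Z_{*}$ extracts a $C([0,t^{*}];Z_{*})$-convergent subsequence. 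Ladyzhenskaya's abstract discrete-time fractal-dimension theorem (\cite[Theorem 7.3.2]{springer}) applied to the map $S_{t^{*}}:\bA\to\bA$ then gives $\text{dim}_f^{H}\bA<\infty$; equivalence with the continuous-time fractal dimension follows from the local Lipschitz bound of Condition~\ref{locallylip}.

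\textbf{Time regularity on full trajectories.} Using full invariance, fix $y\in\bA$ and extend to a complete trajectory $\{S_{\sigma}y:\sigma\in\R\}\subset \bA$. For $h>0$ and $\sigma\in\R$, rewrite via the semigroup property as $(S_{\sigma}y,S_{\sigma+h}y) = (S_t(S_{\sigma-t}y),\,S_t(S_{\sigma-t+h}y))$ with both arguments still in $\bA$, and apply \eqref{specquasi*} over time $t$:
\[
\|S_{\sigma}y-S_{\sigma+h}y\|_H^2 \le e^{-\gamma t}\,\|S_{\sigma-t}y-S_{\sigma-t+h}y\|_H^2 + C_q\sup_{\tau\in[\sigma-t,\sigma]}\|x(\tau)-x(\tau+h)\|_{Z_*}^2.
\]
Sending $t\to\infty$ kills the first term by $H$-boundedness of $\bA$, producing $W(h)\le \sqrt{C_q}\,\omega(h)$ with $W(h):=\sup_{\sigma}\|S_{\sigma}y-S_{\sigma+h}y\|_H$ and $\omega(h):=\sup_{\tau}\|x(\tau)-x(\tau+h)\|_{Z_*}$. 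The velocity component $x_t$ is $Z$-bounded uniformly on $\bA$, so $\|x(\tau+h)-x(\tau)\|_Z\le Rh$ via the Bochner integral $\int_\tau^{\tau+h} x_t\,ds$; interpolating this with the uniform $X$-bound on $x$ gives $\omega(h)\le Ch^{\alpha}$ for some $\alpha\in(0,1]$, hence $W(h)\le Ch^{\alpha}$. The resulting improved $X$-H\"older bound on the increments of $x$ feeds back into the interpolation, and iterating this self-improvement drives the exponent to $1$. The uniform Lipschitz-in-time bound $\|x(\sigma+h)-x(\sigma)\|_X + \|x_t(\sigma+h)-x_t(\sigma)\|_Z \le Ch$ then yields $x_t\in L^{\infty}(\R;X)$ and $x_{tt}\in L^{\infty}(\R;Z)$ with constant ultimately inherited from $C_q$. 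Continuity $x_t\in C(\R;Z)$ is already part of Condition~\ref{secondorder}.

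\textbf{Main obstacle.} The chief subtlety is the H\"older-to-Lipschitz bootstrap in the time-regularity step; when $Z\hookrightarrow Z_{*}$ the estimate $\omega(h)\le Ch$ is immediate and no iteration is needed, but for a general $Z_{*}$ the velocity integral must be combined with interpolation carefully, tracking how $C$ depends on $C_q$ through each iterate. The compact-seminorm verification underlying the Ladyzhenskaya step is also delicate: it hinges on equicontinuity of attractor trajectories into the weaker space $Z_{*}$, which is obtained precisely by pairing the $Z$-continuity of $x_t$ from Condition~\ref{secondorder} with the compact embedding appearing in Definition~\ref{quasidef}.
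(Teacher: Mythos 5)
Your plan reconstructs, in substance, the argument behind this theorem in the source the paper cites (the paper itself only quotes it from \cite[Theorems 7.9.6 and 7.9.8]{springer}): a squeezing/Ladyzhenskaya argument for the dimension and a shifted-trajectory application of \eqref{specquasi*} for the regularity. One point in the dimension step needs care: the quantity $n(y_1,y_2)=\sqrt{C_q}\,\sup_{\tau\in[0,t^*]}\|x_1(\tau)-x_2(\tau)\|_{Z_*}$ is \emph{not} a seminorm of the difference $y_1-y_2$ in $H$, since $x_i(\tau)$ depends on $y_i$ through the nonlinear flow; so the version of Ladyzhenskaya's theorem you invoke (compact seminorms evaluated at $v_1-v_2$) does not literally apply. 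You need the variant in which the squeezing term is a precompact pseudometric on $\bA$ (this is what \cite[Theorem 7.3.3]{springer} provides, and it is exactly what the ``short'' or ``$l$-trajectory'' technique mentioned in the text packages). Your Arzel\`a--Ascoli verification---uniform $X$-bound on $x$, uniform $Z$-bound on $x_t$ from invariance and boundedness of $\bA$, hence Lipschitz equicontinuity in $Z$, combined with the compact embedding---is precisely the precompactness of that pseudometric, so this is a citation-level repair rather than a missing idea. (Also, no separate ``discrete versus continuous dimension'' step is needed: $S_{t^*}\bA=\bA$, so the covering argument bounds $\dim_f^H\bA$ directly.)

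The genuine soft spot is the H\"older-to-Lipschitz bootstrap in the regularity step. It leans on a multiplicative interpolation inequality $\|v\|_{Z_*}\le C\|v\|_X^{1-\theta}\|v\|_Z^{\theta}$, which the abstract hypotheses do not supply: Definition \ref{quasidef} only gives embedding relations among $X$, $Z_*$, $Z$ (interpolation does hold in the application $Z_*=H^{2-\eta}(\Omega)$, but not for a general triple). Moreover, even granting interpolation, ``iterating drives the exponent to $1$'' is not yet a proof: the exponents $\alpha_{n+1}=\alpha_n(1-\theta)+\theta$ never reach $1$ in finitely many steps, so you must check that the accompanying constants stay bounded along the iteration (they do---the recursion has the form $A_{n+1}=KA_n^{1-\theta}$, which converges to $K^{1/\theta}$---but this has to be said before passing to the limit $h^{\alpha_n}\to h$). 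The standard route, which needs only the embeddings and avoids the bootstrap entirely, is Ehrling's lemma: for every $\epsilon>0$ there is $C_\epsilon$ with $\|v\|_{Z_*}\le\epsilon\|v\|_X+C_\epsilon\|v\|_Z$. Inserting this into your inequality $W(h)\le\sqrt{C_q}\,\omega(h)$, choosing $\epsilon\sqrt{C_q}\le 1/2$ and absorbing the $X$-increment (finite, since $\bA$ is bounded) into $W(h)$ on the left, the bound $\|x(\tau+h)-x(\tau)\|_Z\le Rh$ gives $W(h)\le C(C_q,R)\,h$ in a single pass, with exactly the dependence on $C_q$ claimed in the statement; the passage from this uniform Lipschitz bound to $x_t\in L^\infty(\R;X)$ and $x_{tt}\in L^\infty(\R;Z)$ via weak compactness of difference quotients is then routine, as you indicate.
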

\noindent Elliptic regularity can then be applied to the equation itself generating the dynamics $(S_t,H)$ to recover regularity for $x(t)$ in a norm higher than that of the state space $X$.

The following theorem relates generalized fractal exponential attractors to the quasi-stability estimate \cite[p. 388, Theorem 7.9.9]{springer}:
\begin{theorem}\label{expattract*}
Let Conditions  \ref{secondorder} and \ref{locallylip} be in force. Assume that the dynamical system generated by solutions $(S_t,H)$ is ultimately dissipative and quasi-stable on a bounded absorbing set $ B$. Also assume there exists a space $\widetilde H \supset H$ so that $t \mapsto S_ty$ is H\"{o}lder continuous in $\widetilde H$ for every $y \in  B$; this is to say there exists $0<\alpha \le 1$ and $C_{ B,T>0}$ so that 
\begin{equation}\label{holder}||S_ty-S_sy||_{\widetilde H} \le C_{ B,T}|t-s|^{\alpha}, ~~t,s\in[0,T],~~y \in  B.\end{equation} Then the dynamical system $(S_t,H)$ possesses a generalized fractal exponential attractor $A_{\text{exp}}$ whose dimension is finite in the space $\widetilde H$, i.e., $\text{dim}_f^{\widetilde H} A_\text{exp}<+\infty$. 
\end{theorem}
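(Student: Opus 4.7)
The plan is to follow the classical Eden--Foias--Nicolaevski--Temam construction of exponential attractors in its adaptation to quasi-stable systems. First I would extract from ultimate dissipativity a bounded, closed, forward invariant absorbing set $\mathscr{B} \subset H$ (e.g., by closing $\bigcup_{t \ge t_0} S_t B$ for some $t_0$ large enough). Fix a time scale $T^* > 0$ large enough that $e^{-\gamma T^*} < 1/4$ in the quasi-stability estimate \eqref{specquasi*}, and study the discrete dynamical system $V^n$ generated by $V := S_{T^*}$ on $\mathscr{B}$.

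Second, the heart of the argument is a discrete squeezing lemma for $V$. On $\mathscr{B}$, the quasi-stability estimate reads
\begin{equation*}
\|V y_1 - V y_2\|_H \le \theta \|y_1 - y_2\|_H + \mu(y_1, y_2), \qquad \theta < 1/2,
\end{equation*}
where $\mu(y_1, y_2) := \sqrt{C_q}\, \sup_{\tau \in [0, T^*]} \|x_1(\tau) - x_2(\tau)\|_{Z_*}$. The key observation is that $\mu$ enjoys a compactness property on $\mathscr{B}$: local Lipschitz continuity of $S_\tau$ together with Condition \ref{secondorder} gives equicontinuity of $\tau \mapsto x(\tau)$ valued in $Z$, and compactness of $Z \hookrightarrow Z_*$ then permits an Arzel\`a--Ascoli extraction of a subsequence along which $\mu$ collapses to zero. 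With this compactness in hand, iterated Kolmogorov $\varepsilon$-covers of $\mathscr{B}$ in $H$ (as in \cite[Theorem 7.4.2]{springer}) furnish a compact, $V$-invariant set $M^* \subset \mathscr{B}$ with $\dim_f^H M^* < \infty$ that attracts $\mathscr{B}$ under $V^n$ at a uniform exponential rate.

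Third, I would define the continuous-time candidate $A_{\mathrm{exp}} := \bigcup_{t \in [0, T^*]} S_t M^*$. Forward invariance under $S_t$ is immediate, and $H$-exponential attraction follows by writing any $t \ge 0$ as $t = n T^* + \tau$ with $\tau \in [0, T^*)$, so that $S_t = S_\tau \circ V^n$ and local Lipschitz of $S_\tau$ on $\mathscr{B}$ transports the discrete exponential attraction into continuous time. For the finite-dimensionality bound in $\widetilde H$, I would invoke \eqref{holder}: the parametrization $\Phi: [0, T^*] \times M^* \to \widetilde H$ given by $\Phi(\tau, m) := S_\tau m$ is H\"older continuous (in $\tau$ by \eqref{holder}, in $m$ via local Lipschitz on $\mathscr{B}$ composed with the continuous embedding $H \hookrightarrow \widetilde H$). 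Since a H\"older image of a set of finite fractal dimension has finite fractal dimension in the target, with the dimension scaled by $\alpha^{-1}$, one obtains $\dim_f^{\widetilde H} A_{\mathrm{exp}} \le \alpha^{-1}(1 + \dim_f^H M^*) < \infty$, as required.

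The main obstacle I anticipate is the compactness property of the seminorm $\mu$: the supremum over $\tau \in [0, T^*]$ prevents one from quoting compactness of a single embedding pointwise, and forces a genuine Arzel\`a--Ascoli argument. This is precisely why the second-order structural hypothesis (Condition \ref{secondorder}), guaranteeing $x \in C(\mathbb{R}_+; X) \cap C^1(\mathbb{R}_+; Z)$, is essential. Once this compactness is secured, the discrete construction of $M^*$ is standard, and the dimension transfer from $H$ on $M^*$ to $\widetilde H$ on $A_{\mathrm{exp}}$ is comparatively routine, governed by the H\"older exponent $\alpha$ from \eqref{holder}.
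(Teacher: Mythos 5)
Your overall architecture---closed forward-invariant absorbing set, discrete map $V=S_{T^*}$, squeezing, fattening $M^*$ by the flow over $[0,T^*]$, and transfer of the dimension bound to $\widetilde H$ through the H\"older estimate \eqref{holder} (giving $\dim_f^{\widetilde H}A_{\mathrm{exp}}\le \alpha^{-1}(1+\dim_f^H M^*)$)---is the standard route; note that the paper itself offers no proof, but quotes \cite[Theorem 7.9.9]{springer} (see also \cite{quasi}) and remarks that the proof ``relies fundamentally on the technique of short/$l$-trajectories'' \cite{ltraj2}. That remark points at exactly the place where your step 2 has a genuine gap. The quantity $\mu(y_1,y_2)=\sqrt{C_q}\,\sup_{\tau\in[0,T^*]}\|x_1(\tau)-x_2(\tau)\|_{Z_*}$ is a pseudometric on $\mathscr B$, not a compact seminorm evaluated at $y_1-y_2$ or at $Vy_1-Vy_2$; hence the discrete result you invoke (\cite[Theorem 7.4.2]{springer}) does not apply as stated, and the Arzel\`a--Ascoli precompactness of $\mu$ is not by itself enough to conclude $\dim_f^H M^*<\infty$. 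Precompactness gives, for each fixed $\varepsilon$, a finite cover of $\mathscr B$ by sets of $\mu$-diameter $\le\varepsilon$---enough for a compact, exponentially attracting set---but the covering iteration behind finite dimensionality needs scale-uniformity: every $H$-ball of radius $r$ intersected with $\mathscr B$ must be covered by $N$ sets of $\mu$-diameter $\le\delta r$ with $N$ independent of $r$. Homogeneity of a compact seminorm provides this; a bare precompact pseudometric does not, and the covering numbers can blow up as $r\to0$, destroying the fractal-dimension estimate. (A second, smaller slip: the compact embedding you need is $X\hookrightarrow Z_*$, not $Z\hookrightarrow Z_*$---in the application $Z_*=H^{2-\eta}(\Omega)\subset L^2(\Omega)=Z$, so the latter is false; the paper's chain ``$Z\subseteq Z_*\subset X$'' must be read as $Z_*$ intermediate with $X\hookrightarrow Z_*$ compact.)

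The repair is precisely the $l$-trajectory device, which you can keep implicit. Work in $W:=C([0,T^*];X)\cap C^1([0,T^*];Z)$ with norm $\sup_\tau\|w(\tau)\|_X+\sup_\tau\|w'(\tau)\|_Z$. On $W$ the functional $n(w)=\sup_\tau\|w(\tau)\|_{Z_*}$ \emph{is} a compact seminorm, and this is where your Arzel\`a--Ascoli argument belongs: pointwise precompactness in $Z_*$ from boundedness in $X$ (compactness of $X\hookrightarrow Z_*$), equicontinuity in $Z_*$ from the equi-Lipschitz bound in $Z$ via Ehrling's inequality. By Condition \ref{locallylip} (estimate \eqref{specquasi} with $a\in L^\infty_{loc}$) the data-to-trajectory map $\Phi:\mathscr B\to W$, $\Phi(y)=x(\cdot)|_{[0,T^*]}$, is Lipschitz, and $\mu(y_1,y_2)=\sqrt{C_q}\,n(\Phi(y_1)-\Phi(y_2))$. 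Hence an $H$-ball of radius $r$ in $\mathscr B$ is carried into a $W$-ball of radius $Lr$, which by compactness and homogeneity of $n$ is covered by $N_0(\delta)$ sets of $n$-diameter $\le\delta r$ with $N_0$ independent of $r$; pulling back through $\Phi$ yields the scale-uniform $\mu$-coverings and restores $\dim_f^H M^*<\infty$. Equivalently, apply the discrete quasi-stability theorem to the shift map on trajectory segments and project. With this insertion, your steps 1 and 3 (including the H\"older transfer of dimension into $\widetilde H$, which matches the cited proof) go through unchanged.
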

\begin{remark}
We forgo using boldface to describe $A_{exp}$ (in contrast to {\bf the} global attractor $\bA$) precisely because exponential attractors are not unique. 
\end{remark}
\begin{remark}
In addition, owing to the abstract construction of the set $A_{\text{exp}} \subset X$, {\em boundedness} of $A_{\text{exp}}$ in any higher topology is not addressed by Theorem \ref{expattract*}. \end{remark}

The proofs of Theorems \ref{dimsmooth} and \ref{expattract*} can be found in \cite{quasi,springer}, and rely fundamentally on the technique of ``short" trajectories or ``l" trajectories (see, e.g.,  \cite{ltraj2}).

\section{Reduction to Delayed Dynamical System}\label{reducedsection}

In this section, we present the proof of Theorem \ref{rewrite}, which has multiple components. We remark that this theorem has been shown and used before, namely in \cite{delay}. We include the proof here because it relies critically on the well-posedness results for the system \eqref{flowplate} holding for all $U \neq 1$, and these are relatively recent. We note that analogous the result was shown earlier when velocity regularization (as discussed in Section 
ref{regularization}) was present---see \cite{LBC96,b-c-1,ryz,ryz2}.
	
	\subsection{Flow Potentials with Given Neumann Data; Decomposition}
	In what follows it will be necessary to consider the hyperbolic-like flow equation with {\em given} Neumann data. Consider the problem:
\begin{equation}\label{floweq*}
\begin{cases}
(\partial_t+U\partial_x)^2\phi=\Delta \phi & \text{ in }~\mathbb R_+^3\\
\partial_{z} \phi\Big|_{z=0} = h(\xb,t) & \text{ in }~\mathbb R^2\\
\phi(t_0)=\phi_0;~~\phi_t(t_0)=\phi_1
\end{cases}
\end{equation}
We have the following theorem from \cite{b-c-1,springer,miyatake}:
\begin{theorem}\label{flowpot}
Assume $U\ge 0$, $U\ne 1$; take $(\phi_0,\phi_1) \in H^1(\realsthree)\times L^2(\realsthree).$ If \newline$h \in C\left([t_0,\infty);H^{1/2}(\mathbb R^2)\right)$ then \eqref{floweq*} is well-posed (in the weak sense) with 
$$\phi \in C\left([t_0,\infty);H^1(\realsthree_+)\right),~~\phi_t \in C\left( [t_0,\infty);L^2(\realsthree_+)\right).$$
\end{theorem}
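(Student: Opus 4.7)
The plan is to use a Galilean change of frame to convert the convected wave equation into the standard wave equation on $\mathbb R^3_+$, and then invoke the sharp Neumann well-posedness theory for the latter.

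\emph{Step 1 (reduction to the standard wave equation).} I would introduce flow-frame coordinates $\tilde x = x - Ut$, $\tilde y = y$, $\tilde z = z$, $\tau = t$. A chain-rule computation gives $\partial_t + U\partial_x = \partial_\tau$ and $\partial_x = \partial_{\tilde x}$, so setting $\tilde\phi(\tilde x,\tilde y,\tilde z,\tau) := \phi(\tilde x + U\tau,\tilde y,\tilde z,\tau)$ converts $(\partial_t + U\partial_x)^2\phi = \Delta \phi$ into the standard wave equation $\tilde\phi_{\tau\tau} = \Delta\tilde\phi$ on $\mathbb R^3_+\times(t_0,\infty)$. The Neumann condition becomes $\partial_{\tilde z}\tilde\phi\big|_{\tilde z=0} = \tilde h(\tilde x,\tilde y,\tau) := h(\tilde x + U\tau,\tilde y,\tau)$. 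This substitution is an isometry of the spatial Sobolev spaces at each fixed time, so $(\phi_0,\phi_1)\in H^1(\mathbb R^3_+)\times L^2(\mathbb R^3_+)$ is preserved and $\tilde h \in C([t_0,\infty);H^{1/2}(\mathbb R^2))$.

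\emph{Step 2 (Neumann wave problem on the half-space).} I would split the reduced problem into two pieces. The homogeneous Neumann wave equation with data $(\phi_0,\phi_1)$ is generated by the self-adjoint Neumann Laplacian on $L^2(\mathbb R^3_+)$ via the cosine/sine calculus, giving a $C_0$-group on $H^1(\mathbb R^3_+)\times L^2(\mathbb R^3_+)$. To handle the inhomogeneous datum $\tilde h$, I would reflect evenly across $\{\tilde z=0\}$ to extend $\tilde h$ to a distribution on $\mathbb R^3$ and apply the free-space sine propagator (Kirchhoff-type formula). The restriction to $\mathbb R^3_+$ of the resulting particular solution has the correct Neumann trace and inherits $C([t_0,\infty);H^1(\mathbb R^3_+))$ regularity with time derivative in $L^2$. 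Summing the two pieces gives the asserted class; uniqueness is standard from the energy identity for $\tilde\phi$.

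\emph{Step 3 (transform back).} The inverse substitution $\phi(x,y,z,t) = \tilde\phi(x-Ut,y,z,t)$ is a smooth linear change of variables preserving the spatial $H^1$ and $L^2$ norms pointwise in $t$ and continuity in $t$, so the claimed regularity of $\phi$ is inherited from that of $\tilde\phi$.

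The main obstacle is Step 2: the Neumann problem for the wave equation on a half-space fails the uniform Lopatinskii condition, so general boundary-value theory predicts a loss of derivatives and would demand spatial regularity above $H^{1/2}$ to recover an $H^1$ solution. The \emph{gain} of half a derivative that makes the stated pairing $H^{1/2}\to H^1$ sharp is the hidden regularity phenomenon from \cite{miyatake}; in this particular geometry it can be seen from the symmetry that permits the even reflection used above. In a polished write-up I would cite \cite{miyatake,b-c-1,springer} for the precise execution rather than reprove the sharp trace estimate.
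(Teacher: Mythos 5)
Your outline is sound, but note that the paper itself does not prove Theorem \ref{flowpot}: it is quoted directly from \cite{b-c-1,springer,miyatake}, so the hard analytic content in both your write-up and the paper ultimately lives in the same cited sharp Neumann regularity for the half-space wave equation. What you add is the Galilean boost $\tilde x = x-Ut$, which is legitimate here precisely because $\mathbb R^3_+$ and its boundary are invariant under $x$-translations; it reduces $(\partial_t+U\partial_x)^2\phi=\Delta\phi$ to the plain wave equation and incidentally shows that, for this decoupled flow problem, the restriction $U\neq 1$ plays no real role. The cited sources instead treat the convected operator directly by Fourier--Laplace methods, which has the advantage of producing the explicit representation of Theorem \ref{flowformula} (whose shifted arguments $x-(U+\sin\theta)s$ are exactly your moving frame) that the paper needs later for the reduction to the delayed plate equation; your reduction gives well-posedness but not, by itself, that formula. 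Three small glosses to repair in a polished version: (i) under the boost the initial velocity becomes $\tilde\phi_\tau(t_0)=\phi_1+U\partial_x\phi_0\in L^2(\mathbb R^3_+)$, not $\phi_1$ itself; (ii) in Step 3 one has $\phi_t=\tilde\phi_\tau-U\partial_{\tilde x}\tilde\phi$ at shifted arguments, so you are using boundedness of the change of variables on $H^1\times L^2$, not an isometry; and (iii) the even reflection converts the Neumann problem into a single-layer retarded potential but does not by itself deliver the half-derivative gain $H^{1/2}\to H^1$---a naive energy estimate on the reflected source loses roughly half a derivative, and the gain is exactly the hidden regularity of \cite{miyatake} (cf.\ Remark \ref{regularity}), which you correctly plan to cite rather than reprove, just as the paper does.
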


Now, we decompose the flow problem above:
\begin{equation}\label{flow1}
\begin{cases}
(\partial_t+U\partial_x)^2\phi^*=\Delta \phi^*  & ~\text{ in }~ \realsthree_+ \times (0,T) \\
\phi^*(0)=\phi_0; ~~\phi_t(0)=\phi_1 \\
\Dz \phi^* = 0 & ~\text { in } ~\partial \realsthree_+ \times (0,T)
\end{cases}
\end{equation}
\begin{equation}\label{flow2}
\begin{cases}
(\partial_t+U\partial_x)^2\phi^{**}=\Delta \phi^{**}  & ~\text{ in }~ \realsthree_+ \times (0,T) \\
\phi^{**}(0)=0; ~~\phi_t^{**}(0)=0 \\
\Dz \phi^{**} = h(\xb,t) & ~\text { in } ~\partial \realsthree_+ \times (0,T)
\end{cases}
\end{equation}
We focus on the case where the flow comes from the coupled system \eqref{flowplate}, so:
\begin{equation}\label{h}
h(\xb, t) \equiv [u_t + U u_x]_{ext}
\end{equation}
Then, the full flow solution $\phi$ coming from \eqref{flowplate} has the form $$\phi(t)=\phi^*(t)+\phi^{**}(t),$$ where $\phi^* (t) $ solves (\ref{flow1}) and $\phi^{**} (t)$ solves (\ref{flow2}), and $\phi^{**}$ depends on ~$u_t+Uu_x$ on $\Omega$.

\begin{remark}
In fact, a stronger regularity result is available. Finite energy ($H^1(\Omega) \times L^2(\Omega)$) solutions are obtained with $h\in H^{1/3}((0,T) \times \mathbb R^2)$ \cite{LT,tataru}, but the corresponding estimate doesn't provide control of $T$-dependence, and hence is of limited applicability in the long-time behavior context. 
\end{remark}

The analysis of $\phi^*$ is identical to that given in \cite{springer,ryz,b-c-1}. However, the treatment of 
$\phi^{**} (t)$, corresponding to the hyperbolic Neumann map,  is very different owing to the aforementioned loss of regularity. But here we are {\bf treating an a priori, existing finite energy solution}~ $(\phi,\phi_t) \in Y_{fl}$ ~corresponding  to Theorem \ref{nonlinearsolution}. As explained in Remark \ref{regularity},  treating the problem ``component-wise," as in \cite{b-c-1,Chu92b,ryz,ryz2}, would not be possible here.

We do have the following theorem from \cite{supersonic} concerning the a posteriori trace regularity of $\phi_t$ corresponding to a solution $(u,u_t;\phi,\phi_t)$ to \eqref{flowplate}, as in Theorem \ref{nonlinearsolution}:
\begin{theorem}
Let the hypotheses of Theorem \ref{maintheorem} be in force. Then a generalized solution $(u,u_t;\phi,\phi_t) \in C([0,T];Y)$ with flow component as in \eqref{floweq*} driven by $h=[u_t+Uu_x]_{ext}$ has the trace regularity:
 \begin{equation}\label{trace}
  ( \partial _{t} + U \partial_{x} ) tr [ \phi ] \in L^2(0, T; H^{-1/2}(\mathbb R^2)),
  \end{equation}
  where $T$ is arbitrary.
\end{theorem}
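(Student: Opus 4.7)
My plan is to work with the decomposition $\phi = \phi^* + \phi^{**}$ from \eqref{flow1}--\eqref{flow2} and estimate the material-derivative trace $(\Dt + U\Dx)\, tr[\phi]$ piece by piece. The advantage of this split is that $\phi^*$ carries the initial data with zero Neumann boundary condition (so standard hidden-regularity technology for the Neumann wave problem should apply), while $\phi^{**}$ has zero Cauchy data with prescribed, a priori controlled source $h=[u_t+Uu_x]_{\text{ext}}$. Via Corollary~\ref{globalbound}, $h$ is uniformly bounded in $L^2(\mathbb R^2)$ in time. Note that $(\Dt+U\Dx)$ is precisely the characteristic direction of the convective wave operator, and it is this specific first-order combination (not $\Dt$ or $\Dx$ separately) that one should expect to behave well at the boundary.

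For the $\phi^*$ contribution, I would run a multiplier/energy argument: multiply $(\Dt+U\Dx)^2\phi^* = \Delta \phi^*$ by a suitable test function $\psi$ chosen so that its trace on $\mathbb R^2$ is an arbitrary element of $H^{1/2}(\mathbb R^2)$ (with a bound depending on a lift into $H^1(\mathbb R^3_+)$), integrate by parts in $\mathbb R^3_+ \times (0,T)$, and use the homogeneous Neumann condition to eliminate spurious boundary terms. The interior terms are controlled by the finite flow energy (uniformly in $T$ for $U<1$; in a modified $E_{fl}^{sup}$ for $U>1$). Pairing against $\psi$ then realizes $(\Dt+U\Dx)\,tr[\phi^*]$ as an element of $L^2(0,T;H^{-1/2}(\mathbb R^2))$ by duality. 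For $U>1$, where the standard elliptic ``trick'' in time fails, this step is replaced by the microlocal trace estimates of \cite{supersonic}.

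For the $\phi^{**}$ piece, I would use the explicit half-space representation of the solution to \eqref{flow2}: by reflection and the Kirchhoff formula, $tr[\phi^{**}](\xb,t)$ is an integral of translates of $h$ over retarded characteristic discs. Applying $(\Dt + U\Dx)$ to this representation collapses the time convolution (as in \eqref{potential}), producing an integral of $M_\theta h$-type quantities integrated over $\theta\in[0,2\pi]$ and $s\in[0,t^*]$. Because $h$ is uniformly bounded in $L^2(\Omega)$ by Corollary~\ref{globalbound}, the resulting quantity lies in $L^2(0,T; H^{-1/2}(\mathbb R^2))$; in fact one can typically obtain $L^\infty(0,T;L^2)$ here, which is stronger. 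The estimate depends on $T$ only linearly after square-integration.

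The main obstacle is not the $\phi^{**}$ calculation (which is essentially explicit) but rather the $\phi^*$ piece, since the Neumann wave problem on the half-space fails the Lopatinski condition and one cannot naively take a trace of $\phi_t \in L^2(\mathbb R^3_+)$. Establishing the hidden regularity in $H^{-1/2}$ for the material derivative is the sharpest step; for $U<1$ one can proceed by the duality/multiplier argument above, but for $U>1$ I would invoke the microlocal analysis developed in \cite{supersonic}, which is designed precisely for this difficulty. Everything else reduces to bookkeeping using the a priori bounds on the finite-energy solution given by Theorem~\ref{nonlinearsolution} and Corollary~\ref{globalbound}.
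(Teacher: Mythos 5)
There is a genuine gap, and it is precisely the one the paper itself flags. First, note that the paper does not prove this theorem at all: it is quoted from \cite{supersonic}, where it is established as an \emph{a posteriori} estimate for the already-existing finite-energy solution of the coupled problem, by Fourier--Laplace/microlocal analysis of the flow equation with its $L^2$-level Neumann data. Your plan, by contrast, is a component-wise argument through the splitting \eqref{flow1}--\eqref{flow2}, and Remark \ref{regularity} of the paper states explicitly that this component-wise route is what breaks down when $\alpha=0$: with only $h=[u_t+Uu_x]_{\text{ext}}\in C([0,T];L^2(\mathbb R^2))$, the Neumann map gives $\phi^{**}$ only in $H^{2/3}\times H^{-1/3}$, a loss of $1/3$ derivative, and there is no compactness or boundedness to spare. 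Moreover, in the paper's logic the trace regularity \eqref{trace} is the input that \emph{licenses} the boundary computations with the representation formula of Theorem \ref{flowformula} and Corollary \ref{calcs}; trying to derive \eqref{trace} from that representation inverts the order of the argument and flirts with circularity.

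Concretely: (i) For $\phi^{**}$, applying $(\partial_t+U\partial_x)$ to the representation yields $-h$ plus a $\theta,s$-integral of $M_\theta h$ evaluated along shifted characteristics. Since $h$ has only $L^2(\mathbb R^2)$ spatial regularity (because $u_t\in L^2(\Omega)$ only), the term $M_\theta h$ costs a full derivative; your assertion that the result lies in $L^2(0,T;H^{-1/2})$ ``because $h$ is uniformly bounded in $L^2$'', and a fortiori the claim that one ``typically'' gets $L^\infty(0,T;L^2)$, is unjustified---a gain of half a derivative over the naive $H^{-1}$ level would itself be a hidden-regularity/averaging statement requiring the microlocal analysis you are trying to avoid, not bookkeeping via Corollary \ref{globalbound}. (Contrast this with the delay potential $q(u^t)$ in Lemma \ref{pr:q}, where $M_\theta^2$ falls on $u\in H^2_0(\Omega)$, which is why those estimates are straightforward.) (ii) For $\phi^*$, the multiplier/duality step as sketched does not produce the desired object: integrating $(\partial_t+U\partial_x)^2\phi^*=\Delta\phi^*$ against a lifted test function and using Green's formula generates the \emph{Neumann} trace $\partial_z\phi^*$ in the boundary term (which is already known to vanish), and gives no handle on the \emph{Dirichlet} trace of the aeroelastic potential $(\partial_t+U\partial_x)tr[\phi^*]$. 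Extracting that trace at the $H^{-1/2}$ level is exactly where the failure of the Lopatinski condition \cite{miyatake,tataru} must be confronted, and it is done in \cite{supersonic} by symbol analysis in the Fourier--Laplace variables for all $U\neq 1$, not only $U>1$. So the correct repair of your proposal is essentially to replace both halves of it by the argument of \cite{supersonic} applied to the full coupled solution---which is what the paper does by citation.
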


\begin{remark}\label{regularity}
With rotational inertia in force ($\alpha>0$ in \eqref{inertiaeq}), one would have for finite energy solutions  to \eqref{flowplate} ~$h=[u_t+Uu_x]_{ext} \in C([0, T]; H^1(\mathbb R^2))$.
On the other hand, from  \cite{miyatake}, $$h \in L^2(0,T;H^{1/2}(\realstwo) ) \mapsto
\phi^{**} \in  C([0,T];H^1(\realsthree_+) \cap C^1([0,T]; L^2(\realsthree_+)).$$ So the recovery of finite energy solutions is seen, and the Neumann mapping is in fact compact in this case.
When $\alpha=0$, one  has  only $h \in C([0,T]; L^2(\realstwo)) $, which produces a maximal regularity of  $$ \phi^{**} \in  C([0,T];H^{2/3}(\realsthree_+)) \cap C^1([0,T];H^{-1/3}(\realsthree_+)),$$ yielding  the loss of $1/3 $ derivative \cite{LT,delay,tataru}.  This underscores that the component-wise analysis of finite energy solutions to \eqref{flowplate}, successful  in  past literature
\cite{springer,ryz,ryz2}, cannot be utilized for $\alpha=0$.
  \end{remark}

  \subsection{Proof of Theorem \ref{rewrite}}
  
The analysis of $\phi^*$ uses classical tools. For the term $\phi^{**}$ we have the following theorem that provides us with an explicit form of the solution. The proof makes use of Fourier-Laplace transform methods; for a detailed proof of the representation, see for instance \cite[Theorem 6.6.10]{springer}.

Below, we utilize the notations: \begin{align} f^{\dag}(\xb,t,s,\theta)=&~f\left(x-\kappa_1(\theta,s,z), y-\kappa_2(\theta,s,z), t-s\right), \\ \kappa_1(\theta,s,z)=&~Us+\sqrt{s^2-z^2}\sin \theta,~~\kappa_2(\theta,s,z) = \sqrt{s^2-z^2}\cos \theta.\end{align}
\begin{theorem}\label{flowformula}
Considering the problem in \eqref{flow2} with  $h(\xb,t) =-[u_t (x,y,t)+Uu_x(x,y,t)]_{ext}$, there is a time $t^*(\Omega,U)$ such that we have the following representation for a weak solution $\phi^{**}(t)$ for $t>t^{*}$:
\begin{equation}\label{flowrep}
\phi^{**}(\xb,t) = -\dfrac{\chi(t-z) }{2\pi}\int_z^{t^*}\int_0^{2\pi}\Big([u_t]_{ext}^{\dag}(\xb,t,s,\theta)+U[u_x]_{ext}^{\dag}(\xb,t,s,\theta)\Big)d\theta ds.
\end{equation}
where $\chi(s) $ is the Heaviside function. The time $t^*$ is given by:
\begin{equation*}
   t^*=\inf \{ t~:~\xb(U,\theta, s) \notin \Omega \text{ for all } (x,y) \in \Omega, ~\theta \in [0,2\pi], \text{ and } s>t\},
\end{equation*} with ~$\xb(U,\theta,s) = (x-(U+\sin \theta)s,y-s\cos\theta) \subset \realstwo$.
\end{theorem}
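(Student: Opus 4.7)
The plan is to derive the representation via Fourier--Laplace transform methods, exploiting the explicit structure of the convective wave equation in a half-space. First I apply the Fourier transform in the tangential variables $(x,y)$ with duals $(k_1,k_2)$, and the Laplace transform in $t$ with dual $\lambda$. Under these transforms the equation $(\Dt+U\Dx)^2\phi^{**}=\Delta\phi^{**}$ reduces to an ODE in $z$:
\begin{equation*}
\Dz^2 \widehat{\phi^{**}} = \mu^2\,\widehat{\phi^{**}}, \qquad \mu(k_1,k_2,\lambda)=\sqrt{(\lambda+iUk_1)^2+k_1^2+k_2^2}.
\end{equation*}
Decay as $z\to+\infty$ forces $\widehat{\phi^{**}}=A\,e^{-\mu z}$, and the Neumann condition $\Dz\widehat{\phi^{**}}|_{z=0}=\widehat h$ fixes $A=-\widehat h/\mu$, giving the transform-space representation $\widehat{\phi^{**}}=-(e^{-\mu z}/\mu)\,\widehat h$.

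The central step is inverting this representation. The kernel $e^{-\mu z}/\mu$ is, up to the Galilean drift in the $x$-direction, the Fourier--Laplace symbol of the retarded fundamental solution for the Neumann problem of the 3-D wave equation on $\realsthree_+$. Changing variables $(x,y)\mapsto(x-Ut,y)$ decouples the drift; applying Kirchhoff's formula then expresses the inverse transform as a surface integral over retarded spheres centered at $(x,y,z)$. Parameterizing these spheres by the azimuthal angle $\theta\in[0,2\pi]$ and radial lookback parameter $s\ge z$, and then reinstating the Galilean shift $Us$ in the first coordinate, produces the integrands $[u_t]_{\text{ext}}^{\dag}+U[u_x]_{\text{ext}}^{\dag}$ evaluated at the shifted arguments with $\kappa_1 = Us+\sqrt{s^2-z^2}\sin\theta$ and $\kappa_2=\sqrt{s^2-z^2}\cos\theta$. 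The Heaviside factor $\chi(t-z)$ encodes finite speed of propagation: a source at $z=0$ cannot influence the point at height $z$ before time $z$.

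To justify truncating the $s$-integral at $t^*$ rather than $t$, observe that $h=-[u_t+Uu_x]_{\text{ext}}$ is supported in $\Omega$ as a function of $(x,y)$, so the integrand vanishes whenever the evaluation point $\xb(U,\theta,s)=(x-(U+\sin\theta)s,\,y-s\cos\theta)$ exits $\Omega$. By the very definition of $t^*$, this happens for all $s>t^*$ uniformly in $(x,y)\in\Omega$ and $\theta\in[0,2\pi]$. Hence, once $t>t^*$, the Duhamel integration from $s=z$ to $s=t$ (with zero initial data accounting for the lower endpoint in time) may be truncated at $s=t^*$ without loss, yielding \eqref{flowrep}.

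The main obstacle is rigorous justification of the inversion at the available regularity. As Remark \ref{regularity} emphasizes, with $\alpha=0$ one has only $h\in C([0,T];L^2(\realstwo))$, so the Neumann lift loses $1/3$ of a derivative and $\phi^{**}$ cannot be interpreted as a finite-energy solution in the classical sense of \cite{miyatake}. The identity \eqref{flowrep} must therefore be understood in a weak sense: one regularizes $h$ (say by mollification) so that the formula holds pointwise and the corresponding $\phi^{**}$ solves \eqref{flow2} classically, and then passes to the limit using continuous dependence of the weak solution on the Neumann datum in the appropriate dual topology---via the Paley--Wiener and hidden-trace arguments of \cite[Ch.~6]{springer}. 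Once stability is in hand, the representation extends by density to the weak solutions of interest.
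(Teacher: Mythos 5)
Your overall route is the same as the paper's: the paper gives no self-contained argument for Theorem \ref{flowformula}, only the remark that the representation follows from Fourier--Laplace transform methods, with the details outsourced to \cite[Theorem 6.6.10]{springer}. Your transform computation (the ODE $\partial_z^2\widehat{\phi^{**}}=\mu^2\widehat{\phi^{**}}$ with $\mu^2=(\lambda+iUk_1)^2+k_1^2+k_2^2$, the decaying branch $-\widehat h\,e^{-\mu z}/\mu$, inversion via the retarded half-space (even-reflection/Rayleigh) integral after the Galilean change of frame, which is exactly where the shift $Us$ in $\kappa_1$ originates) is the correct skeleton of that argument, and the regularize-then-pass-to-the-limit treatment of weak solutions is the right way to handle the low regularity of $h$.

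The one step that does not hold as you wrote it is the truncation of the $s$-integral at $t^*$. For $z>0$ the integrand in \eqref{flowrep} is evaluated at $\big(x-Us-\sqrt{s^2-z^2}\sin\theta,\;y-\sqrt{s^2-z^2}\cos\theta\big)$, which is \emph{not} the point $\xb(U,\theta,s)=(x-(U+\sin\theta)s,\,y-s\cos\theta)$ appearing in the definition of $t^*$; your sentence identifies the two. Concretely, take $U=0$, $\Omega$ the unit disk (so $t^*\approx\mathrm{diam}\,\Omega$), and a point with $z>t^*$ and $s$ slightly larger than $z$: then $\sqrt{s^2-z^2}$ is small, the evaluation point lies inside $\Omega$, and the contribution from $s>t^*$ is generically nonzero, while the truncated integral $\int_z^{t^*}$ is empty. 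So the support-of-$h$ argument alone does not justify replacing the Duhamel upper limit $t$ by $t^*$ away from the boundary. What is true---and all that is used in Theorem \ref{rewrite}---is the truncation on the trace $z=0$ with $(x,y)\in\Omega$, where the evaluation point is exactly $\xb(U,\theta,s)$ and the definition of $t^*$ applies; the interior representation should be kept with upper limit $t$, as in the cited source. This looseness is arguably already present in the statement you were given, but your write-up should either restrict the truncated formula to the boundary points where it is applied, or explicitly verify vanishing of the tail for the actual evaluation points rather than for $\xb(U,\theta,s)$.
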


Moreover, we have the following point-wise formula for the derivative in $t$ \cite[p. 480]{ryz2} (which is justified for smooth data in $\mathcal D(\mathbb T)$, and can be taken distributionally for data in $Y$). Differentiation of \eqref{flowrep} in $(x,y)$ is straightforward.
\begin{corollary}\label{calcs} Under the same hypotheses as Theorem \ref{flowformula}, we have:
\begin{align}\label{formderiv1}
\phi^{**}_t&(\xb, t) =  \dfrac{1}{2\pi}\Big\{\int_0^{2\pi}[u_t]_{ext}^{\dag}(\xb,t,t^*,\theta) d\theta -\int_0^{2\pi} [u_t]_{ext}^{\dag}(\xb,t,z,\theta )d\theta \\\nonumber
&+U\int_z^{t^*}\int_0^{2\pi}[\partial_x u_t]_{ext}^{\dag}(\xb,t,s,\theta) d\theta ds \\ &+\int_z^{t^*}\int_0^{2\pi}\dfrac{s}{\sqrt{s^2-z^2}}[M_{\theta}u_t]_{ext}^{\dag}(\xb,t,s,\theta) d\theta ds\Big\}  \nonumber
\end{align}
with $M_{\theta} = \sin(\theta)\partial_x+\cos(\theta) \partial_y$.
\end{corollary}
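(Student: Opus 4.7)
The plan is to obtain \eqref{formderiv1} by directly differentiating the representation \eqref{flowrep} of $\phi^{**}$ in $t$ and then trading the second time derivative of $u$ that naively appears for spatial derivatives via an integration by parts in the delay variable $s$. For $t$ large enough that $\chi(t-z)$ is locally constant (in particular, $t > t^* + z$), differentiation under the integral is justified for smooth data and produces
\[
\phi^{**}_t(\xb,t) = -\frac{1}{2\pi}\int_z^{t^*}\int_0^{2\pi}\bigl([u_{tt}]^{\dag} + U[\partial_x u_t]^{\dag}\bigr)\, d\theta\, ds,
\]
since $\kappa_1,\kappa_2$ do not depend on $t$; the chain rule collapses $\partial_t[u_t]^{\dag}=[u_{tt}]^{\dag}$ and $\partial_t[u_x]^{\dag}=[\partial_x u_t]^{\dag}$.

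The principal step is to eliminate $[u_{tt}]^{\dag}$, which would demand regularity beyond $Y$, by an integration by parts in the delay variable $s$. From the explicit forms $\partial_s\kappa_1 = U + (s/\sqrt{s^2-z^2})\sin\theta$ and $\partial_s\kappa_2 = (s/\sqrt{s^2-z^2})\cos\theta$, the chain rule applied to $[u_t]^{\dag}$ yields
\[
\partial_s [u_t]^{\dag}(\xb,t,s,\theta) = -[u_{tt}]^{\dag} - U[\partial_x u_t]^{\dag} - \frac{s}{\sqrt{s^2-z^2}}[M_{\theta} u_t]^{\dag},
\]
where the operator $M_{\theta} = \sin\theta\,\partial_x + \cos\theta\,\partial_y$ neatly packages the angular spatial derivatives. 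Solving for $[u_{tt}]^{\dag}$, substituting into the integrand above, and integrating $\partial_s [u_t]^{\dag}$ in $s$ from $z$ to $t^*$ produces the boundary contributions $\int_0^{2\pi}[u_t]^{\dag}(\xb,t,t^*,\theta)\,d\theta$ and $-\int_0^{2\pi}[u_t]^{\dag}(\xb,t,z,\theta)\,d\theta$, together with the remaining volume integrals against $[\partial_x u_t]^{\dag}$ and $(s/\sqrt{s^2-z^2})[M_{\theta}u_t]^{\dag}$, matching the right-hand side of \eqref{formderiv1}.

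The main obstacle is rigor for general data. For classical data $y_0 \in \cD(\bT)$, as the parenthetical preceding the corollary notes, the function $u$ is smooth enough that both the interchange of $\partial_t$ with the integral and the chain-rule identity above are valid pointwise, and \eqref{formderiv1} is immediate. For general $y_0 \in Y$ one approximates by smooth data and interprets each side of \eqref{formderiv1} distributionally; the hidden trace regularity \eqref{trace} established for finite-energy solutions is consistent with this reading. A secondary technical point is the integrable singularity $s/\sqrt{s^2 - z^2} \sim (s-z)^{-1/2}$ at $s=z$, which is handled by an $\varepsilon$-cutoff $s \ge z + \varepsilon$ before the integration by parts, passing $\varepsilon \searrow 0$ at the end; the boundary term at $s=z$ is recovered in the limit and is well-defined since $u_t\in L^2(\Omega)$.
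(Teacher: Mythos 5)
Your overall strategy---differentiate \eqref{flowrep} under the integral for smooth data, trade $[u_{tt}]_{ext}^{\dag}$ for an $s$-derivative plus spatial terms via the chain rule, handle the $s=z$ singularity by an $\varepsilon$-cutoff, and pass to data in $Y$ by density/distributionally---is exactly the natural route (the paper itself offers no computation, only the citation to the literature and the same smooth-data justification), and your chain-rule identity
\begin{equation*}
\partial_s[u_t]_{ext}^{\dag}=-[u_{tt}]_{ext}^{\dag}-U[\partial_x u_t]_{ext}^{\dag}-\frac{s}{\sqrt{s^2-z^2}}[M_{\theta}u_t]_{ext}^{\dag}
\end{equation*}
is correct. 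The gap is in your last step. Substituting this identity into the integrand $[u_{tt}]_{ext}^{\dag}+U[\partial_x u_t]_{ext}^{\dag}$ that you correctly obtained from $\partial_t$ of the \emph{full} representation makes the two $U[\partial_x u_t]_{ext}^{\dag}$ terms cancel, so integrating $\partial_s[u_t]_{ext}^{\dag}$ in $s$ leaves a \emph{three}-term expression:
\begin{equation*}
\phi^{**}_t=\frac{1}{2\pi}\Big\{\int_0^{2\pi}[u_t]_{ext}^{\dag}(\xb,t,t^*,\theta)\,d\theta-\int_0^{2\pi}[u_t]_{ext}^{\dag}(\xb,t,z,\theta)\,d\theta+\int_z^{t^*}\int_0^{2\pi}\frac{s}{\sqrt{s^2-z^2}}[M_{\theta}u_t]_{ext}^{\dag}\,d\theta\,ds\Big\},
\end{equation*}
which differs from \eqref{formderiv1} by exactly $\frac{U}{2\pi}\int_z^{t^*}\int_0^{2\pi}[\partial_x u_t]_{ext}^{\dag}\,d\theta\,ds$. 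Your assertion that the computation ``matches the right-hand side of \eqref{formderiv1}'' therefore hides a bookkeeping error: either you double-count the $U$-term, or you silently drop the $U[\partial_x u_t]_{ext}^{\dag}$ contribution coming from $\partial_t$ applied to the $U[u_x]_{ext}^{\dag}$ part of \eqref{flowrep}.

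The four-term structure of \eqref{formderiv1} is what you get if you run the same argument on the \emph{full} Neumann datum $g=[u_t+Uu_x]_{ext}$: writing $\phi^{**}=-\frac{1}{2\pi}\int_z^{t^*}\int_0^{2\pi}g^{\dag}$ and using $[\partial_t g]^{\dag}=-\partial_s[g]^{\dag}-U[\partial_x g]^{\dag}-\frac{s}{\sqrt{s^2-z^2}}[M_{\theta}g]^{\dag}$ yields \eqref{formderiv1} with every $u_t$ replaced by $u_t+Uu_x$; applying your identity once more to $[u_x]^{\dag}$ shows this full-datum four-term formula coincides with your three-term one, whereas \eqref{formderiv1} with bare $u_t$, literally read, is the $t$-derivative of only the $[u_t]^{\dag}$-portion of \eqref{flowrep}. (The subsequent display in the paper for $(\partial_t+U\partial_x)\gamma[\phi^{**}]$, which carries $M_{\theta}h$ with $h$ the full downwash, corroborates the full-datum reading.) So to make your proof complete you must either derive the identity for the full datum and reconcile it with the printed formula, or record the three-term identity and the discrepancy explicitly; as written, the final ``matching'' claim is false even though all the surrounding technical points (local constancy of $\chi(t-z)$ for $t>t^*+z$, the integrable $(s-z)^{-1/2}$ singularity, density and the hidden trace regularity \eqref{trace}) are handled sensibly.
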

Therefore, to obtain the representation of ~$r_{\Omega}tr[\phi_t+U\phi_x]$ on the RHS of the plate equation in \eqref{flowplate}, we explicitly compute derivatives and restrict---this makes explicit the Neumann-to-Dirichlet map here. Indeed, 
the Kirchhoff representation for the solution  $\phi^*(\xb,t)$
in $\R_+^3$ (see, e.g.,  \cite[Theorem 6.6.12]{springer}), shows that, with $\phi_0$ and $\phi_1$  localized in $ K_{\rho}$,
then   Huygen's principle,
   gives  $\phi^*(\xb,t)\equiv 0$ for all $\xb\in  K_{\rho} $
and $t\ge t_{\rho}$. Thus we have that
\[
\big(\partial_t+U\partial_x\big)tr[\phi^*]\equiv0,~~~\xb\in \Omega,~t\ge t_{\rho}.
\]
Thus $\phi^*$ tends to zero in the sense of the local flow energy, i.e.,  \begin{equation}\label{starstable} \|\nabla \phi^*(t)\|_{L^2( K_{\rho} )}^2 + \|\phi^*_t(t)\|_{L^2( K_{\rho} )} \to 0, ~~ t \to \infty, \text{ for any fixed}~  \rho>0.\end{equation} 

It remains  to consider flow variable $\phi^{**}$, whose aeroelastic potential on the boundary coincides with
that of $\phi$, and hence it displays trace regularity as in (\ref{trace}) for $t>t_{\rho}$. This allows one to perform calculations with Corollary \ref{calcs}
on smooth solutions in order to obtain the representation
\begin{align*}
 ( \partial _{t} + U \partial_{x} )\gamma [ \phi^{**}] = & -\, h(x,y,t)
\\ &
+\, \frac{1}{2\pi}
 \int_{0} ^t   \int_{0}^{2\pi} 
[M_\theta h] (x - (U+\sin\theta)s,
y - s\cos\theta, t-s)~d\theta ds,
\end{align*}
for $h(x,t)=-[u_t+Uu_x]_{ext}$, yielding  Theorem \ref{rewrite}.

	\subsection{Estimates on the Delay Potential $q(u^t)$}\label{qest}

We now look at the structure of the delay potential $q(u^t)$ appearing in Theorem \eqref{rewrite}. 

\begin{lemma}\label{pr:q}
Let $q(u^t)$ be given by (\ref{potential}). Then \begin{equation}\label{qnegest}
||q(u^t)||^2_{H^{-1}(\Omega)} \le ct^*\int_{t-t^*}^t||u(\tau)||^2_{H^1(\Omega)}d\tau
\end{equation} for any $u \in L^2(t-t^*,t;H_0^1(\Omega))$.
If $u \in L^2_{loc}(-t^*,\infty;H^2\cap H_0^1)(\Omega))$ we also have
\begin{equation}\label{qnegest2}
||q(u^t)||^2 \le ct^*\int_{t-t^*}^t||\Delta u(\tau)||^2d\tau,~~~\forall t\ge0,
\end{equation}
and
\begin{equation}\label{qnegest3}
\int_0^t ||q(u^\tau)||^2 d\tau \le c[t^*]^2\int_{-t^*}^t||u(\tau)||^2_2d\tau
,~~~\forall t\ge0.
\end{equation}
Moreover, if $u \in C\left((-t^*,+\infty);(H^2\cap H_0^1)(\Omega)\right)$, we have that  \newline $q(u^t) \in C^1(\R_+; H^{-1}(\Omega))$, and for all $t \ge 0$
\begin{equation}\label{qnegest4}
\|\partial_t[q(u^t)]\|_{H^{-1}(\Omega)} \le C\Big\{ ||u(t)||_{H^1(\Omega)}+||u(t-t^*)||_{H^1(\Omega)}+\int_{-t^*}^0||\Delta u(t+\tau)||d\tau\Big\}.
\end{equation}
\end{lemma}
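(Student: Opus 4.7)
My plan exploits the key observation that the map $(x,y)\mapsto \xb(U,\theta,s)=(x-(U+\sin\theta)s,\, y-s\cos\theta)$ is a pure spatial translation, so the first-order operator $M_\theta$ commutes with composition by this shift: $M_\theta^2[u_{\text{ext}}(\xb(U,\theta,s),t-s)] = [M_\theta^2 u_{\text{ext}}](\xb(U,\theta,s),t-s)$. Coupled with translation-invariance of the $L^2(\mathbb R^2)$ norm and the trivial bound $\|M_\theta^k f\|_{L^2(\mathbb R^2)}\le C\|f\|_{H^k(\mathbb R^2)}$, every estimate collapses to elementary Cauchy--Schwarz in $s$ and $\theta$, with duality used only for the $H^{-1}$ estimates.

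For \eqref{qnegest2}, I apply Cauchy--Schwarz first in $s\in[0,t^*]$ (producing a factor $t^*$), then in $\theta$, and finally integrate the square modulus over $\Omega\subset\mathbb R^2$; translation invariance yields $\|M_\theta^2 u_{\text{ext}}(\cdot,t-s)\|^2_{L^2(\mathbb R^2)} \le C\|\Delta u(t-s)\|^2$ (the zero extension of a function in $H^2\cap H_0^1$ lies in $H^2(\mathbb R^2)$), and a substitution $\tau=t-s$ gives the stated bound. Estimate \eqref{qnegest} follows the same blueprint, but I pair against $\psi\in H_0^1(\Omega)$ (extended by zero), perform the translation change of variables $\tilde{\mathbf x} = \xb(U,\theta,s)$ inside the spatial integral, and integrate by parts once in the transverse direction to move one $M_\theta$ from $u_{\text{ext}}$ onto the (translated) test function; the remaining $M_\theta u_{\text{ext}}$ is controlled by $\|u\|_{H^1}$. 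Estimate \eqref{qnegest3} comes for free from \eqref{qnegest2} by integrating in $\tau\in[0,t]$ and interchanging the order of integration: the inner time window of length $t^*$ contributes the second factor of $t^*$, and $\|\Delta u\|\le C\|u\|_2$ on $H^2\cap H_0^1$.

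The main obstacle is \eqref{qnegest4}, since naive differentiation of $q(u^t)$ in $t$ produces $\partial_t u$, whereas the right-hand side contains only \emph{spatial} norms. The trick I plan to use is the chain-rule identity
\begin{equation*}
\partial_t\big[u_{\text{ext}}(\xb(U,\theta,s),t-s)\big] = -\partial_s\big[u_{\text{ext}}(\xb(U,\theta,s),t-s)\big] + [(U+\sin\theta)\partial_x+\cos\theta\,\partial_y]\,u_{\text{ext}}\big|_{\xb,\,t-s},
\end{equation*}
which trades $\partial_t$ for $-\partial_s$ plus first-order spatial operators. Applying $M_\theta^2$ and integrating the $-\partial_s$ term by parts in $s\in[0,t^*]$ produces exactly two boundary contributions, at $s=0$ and $s=t^*$, which after the duality--change-of-variables procedure of \eqref{qnegest} yield the $\|u(t)\|_{H^1(\Omega)}$ and $\|u(t-t^*)\|_{H^1(\Omega)}$ terms. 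The remaining spatial integrand carries three spatial derivatives of $u$; dualizing against $\psi\in H_0^1$ moves one derivative onto $\psi$ and leaves $\|\Delta u(t-s)\|$ inside the $s$-integral, which after substitution is exactly $\int_{-t^*}^0 \|\Delta u(t+\tau)\|\,d\tau$. The only genuine subtlety is justifying the pointwise chain-rule manipulation under the hypothesis $u\in C((-t^*,\infty);H^2\cap H_0^1)$; a density argument on smooth data, followed by continuity of all resulting terms, makes the calculation rigorous and passes to the claimed $C^1(\mathbb R_+;H^{-1}(\Omega))$ regularity of $q(u^t)$.
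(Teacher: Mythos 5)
Your proposal takes essentially the same route as the paper. For \eqref{qnegest4} --- the only estimate the paper proves in detail (it defers \eqref{qnegest}--\eqref{qnegest3} to \cite{Chu92b} and \cite{springer} as ``straightforward'') --- the paper's argument is precisely your plan: its pointwise formula \eqref{qderiv} for $\partial_t[q(u^t)]$ is exactly what your chain-rule identity $\partial_t[\,\cdot\,]=-\partial_s[\,\cdot\,]+(\text{first-order spatial operator})$ yields after integrating the $-\partial_s$ term in $s$ over $(0,t^*)$ (boundary contributions at $s=0$ and $s=t^*$, plus interior terms carrying one extra spatial derivative); the paper then pairs with $\psi\in H_0^1(\Omega)$, extends the integrals to $\mathbb R^2$, and integrates by parts once ($M_\theta$ onto $\psi$ in the two boundary terms, one spatial derivative onto $\psi$ in the interior terms), arriving at the same bound, with the same reduction to smooth data by density. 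The sign slip in your identity (the transported spatial term should enter with a minus) is immaterial for the norm estimates. Your direct Cauchy--Schwarz/translation-invariance arguments for \eqref{qnegest}--\eqref{qnegest3} are the standard ones behind the cited references.

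One step needs correction: in justifying \eqref{qnegest2} you assert that the zero extension of a function in $(H^2\cap H_0^1)(\Omega)$ lies in $H^2(\mathbb R^2)$. That is false in general --- extension by zero preserves $H^2$ only when the normal derivative also vanishes on $\Gamma$, i.e., for $u\in H_0^2(\Omega)$; for merely $H^2\cap H_0^1$ data, $M_\theta^2 u_{\text{ext}}$ carries a distributional layer on the (translated) boundary and the bound $\|M_\theta^2 u_{\text{ext}}\|_{L^2(\mathbb R^2)}\le C\|\Delta u\|_{L^2(\Omega)}$ fails as written. The repair is either to invoke the clamped setting of the flow--plate problem, where $u(t)\in H_0^2(\Omega)$ and the extension is regularity preserving (as the paper notes when defining $u_{\text{ext}}$), or to read the integrand with the ordering $[M_\theta^2 u]_{\text{ext}}$ --- compute $M_\theta^2 u\in L^2(\Omega)$ first, then extend and translate --- which is exactly the ordering appearing in the paper's formula \eqref{qderiv}; with that reading your computation for \eqref{qnegest2}--\eqref{qnegest3} goes through verbatim.
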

\begin{proof}
The proof of the bounds \eqref{qnegest}--\eqref{qnegest3} can be found in \cite{Chu92b} and \cite{springer}, and are straightforward.
Thus we need to check \eqref{qnegest4} only. Without loss of generality we can assume $u\in C\left((-t^*,\infty);
C_0^\infty(\Om)\right)$.
The following point-wise formula for the time derivative of $q(u^t)$ appearing above in \eqref{potential} is direct:
\begin{align}\label{qderiv}
\partial_t [q(u^t)] =&\int_0^{2\pi}\frac{1}{2 \pi}[M^2_{\theta} u]_{\text{ext}}\big(\xb(U,\theta,0),t \big)d\theta\\&-\int_0^{2\pi}\frac{1}{2 \pi}[M^2_{\theta} u]_{\text{ext}}\big(\xb(U,\theta,t^*),t-t^* \big)d\theta\nonumber\\
&+\int_0^{t^*}\int_0^{2\pi}(U+\sin\theta)\frac{1}{2 \pi}[M^2_{\theta} u_x]_{\text{ext}}\big(\xb(U,\theta,s),t-s \big)d\theta ds\nonumber\\
&+\int_0^{t^*}\int_0^{2\pi}(\cos\theta)\frac{1}{2 \pi}[M^2_{\theta} u_y]_{\text{ext}}\big(\xb(U,\theta,s),t-s \big)d\theta ds. \nonumber
\end{align}
\par
Recall that $\xb(U,\theta,s) = (x-(U+\sin\theta)s,y-s\cos\theta)$, and consider for any $\psi \in H^1_0(\Omega)$ the quantity $\big\langle \partial_t[q(u^t)],\psi \big\rangle$.
 In all the associated integrals, we extend the integration over $\Omega$ to all of $\realstwo$ and, recalling the definition of $M_{\theta}=\sin(\theta)\partial_x+\cos(\theta)\partial_y$, integrate by parts once in $M_{\theta}$ in the first and second terms, and integrate by parts once in space in the third and fourth terms. This leaves us with:
$$
|\langle \partial_t[q(u^t)],\psi\rangle | \le C\Big\{ ||u(t)||_1+||u(t-t^*)||_1+\int_{-t^*}^0||u(t+\tau)||_2d\tau\Big\}||\psi||_1.
$$
This implies the conclusion in \eqref{qnegest4}. (Detailed calculations are found in the Appendix of \cite{delay}.)
\end{proof}

	\subsection{General Nonlinear Plates with Delay}\label{delayplatesec}
	We now consider the delay system given through Theorem \ref{rewrite} as a standalone system. We have shown estimates corresponding to the delay potential in the previous section, and we now state some results for the general delay system. In analyzing the long-time behavior of the structural component $(u,u_t)$ of a trajectory $S_t(y_0)$ we will use multiplier methods on the system \eqref{reducedplate}.
	
	We use the notation as in Section \ref{reducedresults}. The parameter $0<t^*<+\infty$ is the time of delay and $u^t(\cdot)$,
 for a function on $s\in (-t^*,0)$, is of the form $s\mapsto u(t+s)$.
 We
need to impose an initial condition
 of the form $u|_{t \in (-t^*,0)} = \eta(\xb, t)$,
 where $\eta$ is a given function on $\Om \times (-t^*,0)$, specifically,
 $\eta \in L^2(-t^*,0;H^2_0(\Omega))$. 
 Thus we have the system
\begin{equation}\label{plate}\begin{cases}
u_{tt}+\Delta^2u+(k+1)u_t+f_B(u)= p_0-Uu_x+q(u^t) ~~ \text { in } ~\Omega\times (0,T), \\
u=\Dn u = 0  ~~\text{ on } ~ \partial\Omega\times (0,T),  \\
u(0)=u_0,~~u_t(0)=u_1,~~\\ u|_{t \in (-t^*,0)} = \eta\in L^2(-t^*,0;H^2_0(\Omega)).
 \end{cases}
\end{equation}
As mentioned in Section \ref{reducedresults}, the general delay plate equation \eqref{genplatedelay} can host a broad class of delay potentials, $q(u^t,t)$, for instance encompassing $q(u^t)$ is given in \eqref{potential}.
 The scalar $k\ge 0$ is an imposed damping coefficient, and represents structural weak damping across the full interior of the plate.  The operator term $Uu_x$ can be replaced by any spatial lower order terms which do not have gradient structure (as demarcated by $Lu$ in \eqref{genplatedelay}).
 
 Long-time behavior analysis of the delayed system depends on  the well-posedness of suitably
defined weak solutions which generate a dynamical system on the  phase space $\bH \equiv H_0^2(\Omega) \times L^2(\Omega) \times L^2(-t^*,0;H_0^2(\Omega))$.
 Well-posedness of weak solutions  has been addressed \cite{Chu92b} and
 \cite[Section 3.3.1]{springer}
via the Galerkin method, see also \cite{oldchueshov1,oldchueshov2}. In what follows we  summarize and complement relevant results.
\par

A \textit{weak solution} to \eqref{plate} on $[0,T]$ is a function $$u \in L^{\infty}(0,T;H_0^2(\Omega))\cap W^{1,\infty}(0,T;L^2(\Omega)) \cap L^2(-t^*,0;H_0^2(\Omega))$$ such that the variational relation corresponding to \eqref{plate} holds (see, e.g., \cite[(4.1.39), p.211]{springer}).
\begin{lemma}\label{p:well}
 Consider \eqref{plate} with $q(u^t)$ as in \eqref{potential} with initial data
$$
(u_0,u_1,\eta) \in\bH = H_0^2(\Omega)\times L^2(\Omega) \times L^2(-t^*,0;H_0^2(\Omega)).
$$
Then (\ref{plate}) has a unique weak solution on $[0,T]$ for any $T>0$. This solution belongs to
the class
$C\left(0,T;H_0^2(\Omega)\right)\cap C^1\left(0,T;L^2(\Omega)\right),
$
and satisfies the energy identity
\begin{equation}\label{energyrelation}
E_{pl}(t)+(k+1)\int_s^t ||u_t(\tau)||^2d\tau=E_{pl}(s)+
\int_s^t\langle q(u^{\tau})-Uu_x(\tau),u_t(\tau)\rangle d\tau
\end{equation}
where the expression $E_{pl}$ is as before in \eqref{plateenergy}.
\end{lemma}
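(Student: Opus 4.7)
The plan is to construct the solution via a Faedo-Galerkin approximation using the eigenbasis $\{e_j\}_{j\ge 1}$ of the clamped biharmonic operator on $\Omega$, deriving uniform a priori bounds that make essential use of the estimates on $q(u^t)$ already proved in Lemma \ref{pr:q}. The approximation is set up as follows: let $P_N$ be the orthogonal projector onto $\text{span}\{e_1,\dots,e_N\}$, take $u_0^N,u_1^N$ as projections of the initial data and $\eta^N$ as the projection of $\eta$ onto $L^2(-t^*,0; P_N H_0^2(\Omega))$, and seek $u^N(t) = \sum_{j\le N} c_j^N(t) e_j$ satisfying the projected system. Because $q(u^t)$ depends only on values of $u$ at times strictly less than $t$ and is a bounded linear operator in the history variable, the projected problem reduces on successive intervals $[mt^*,(m+1)t^*]$ to a classical ODE system with prescribed time-dependent right-hand side; the locally Lipschitz character of the Berger nonlinearity $f_B$ from $H_0^2(\Omega)$ to $L^2(\Omega)$ yields local existence by Cauchy-Lipschitz.

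To push the approximations to a global solution, I would test against $u_t^N$ to produce the energy identity for $E_{pl}(u^N(t))$ analogous to \eqref{energyrelation}, noting that $f_B$ has the potential $\Pi_B$ built into $E_{pl}$. The memory term is absorbed by applying Cauchy-Schwarz together with \eqref{qnegest2}, which gives
\[
\Big|\int_s^t \langle q(u^{N,\tau}), u_t^N\rangle\, d\tau\Big| \le \frac{1}{2}\int_s^t\|u_t^N\|^2 d\tau + C t^* \int_s^t \int_{\tau-t^*}^\tau \|\Delta u^N(r)\|^2 dr\, d\tau,
\]
while the non-conservative $-Uu_x$ term is handled by elementary interpolation. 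The factor $k+1$ on the left absorbs the first term; applying a Gronwall inequality to $E_{pl}(u^N(t)) + \int_{-t^*}^t \|\Delta u^N(\tau)\|^2 d\tau$ yields the uniform bound
\[
\sup_{[0,T]} \|u^N(t)\|_2^2 + \sup_{[0,T]} \|u_t^N(t)\|^2 + \int_{-t^*}^T \|\Delta u^N(\tau)\|^2 d\tau \le C\bigl(T,\|(u_0,u_1,\eta)\|_{\bH}\bigr),
\]
so the approximations exist on all of $[0,T]$. Passage to the limit is then standard: extract a subsequence with $u^N \rightharpoonup^* u$ in $L^\infty(0,T;H_0^2(\Omega))$ and $u_t^N \rightharpoonup^* u_t$ in $L^\infty(0,T; L^2(\Omega))$; the Aubin-Lions lemma furnishes strong convergence of $u^N$ in $C([0,T]; H^{2-\eta}(\Omega))$ for small $\eta > 0$, which is sufficient to identify the nonlinear limit $f_B(u^N) \to f_B(u)$ (since $\|\nabla u^N\|^2 \to \|\nabla u\|^2$ and $\Delta u^N \rightharpoonup \Delta u$), while the estimate \eqref{qnegest3} shows $u \mapsto q(u^{\cdot})$ is linear and bounded from $L^2(-t^*,T;H_0^2(\Omega))$ to $L^2(0,T;L^2(\Omega))$, so $q(u^{N,\tau}) \to q(u^\tau)$ in $L^2(0,T;L^2(\Omega))$.

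Uniqueness is obtained by the standard energy method applied to the difference $w = u^{(1)} - u^{(2)}$ of two weak solutions with identical data: testing the equation for $w$ with $w_t$, using the local Lipschitz estimate on $f_B$ restricted to the ball in which both trajectories live, combined with \eqref{qnegest2} applied to $q(w^\tau)$, and invoking Gronwall's inequality on the quantity $\|\Delta w(t)\|^2 + \|w_t(t)\|^2 + \int_{t-t^*}^t \|\Delta w(\tau)\|^2 d\tau$ forces $w\equiv 0$. Continuity $u\in C([0,T];H_0^2)\cap C^1([0,T];L^2)$ and the energy identity \eqref{energyrelation} follow in the usual way: one establishes the identity first for the Galerkin approximants (which are smooth in time), then passes to the limit on each term; weak continuity combined with continuity of $t \mapsto E_{pl}(u(t))$ (which is itself a consequence of the identity, since the RHS is absolutely continuous in $t$) upgrades weak continuity to strong continuity in $\bH$.

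The principal obstacle is the non-gradient, non-local-in-time structure introduced by $q(u^t)$ and the term $-Uu_x$: energy is not conserved and the system is genuinely of delay type, so one cannot invoke classical semigroup generation on $Y_{pl}$ alone, and the phase space must include the full past history $L^2(-t^*,0;H_0^2(\Omega))$. However, the sharp negative-norm bounds on $q(u^t)$ provided by Lemma \ref{pr:q} are precisely what is needed to close the energy estimate under only the damping $(k+1)\|u_t\|^2$ coming from the flow-induced dissipation, so the Galerkin-plus-Gronwall scheme succeeds without any regularization of the dynamics.
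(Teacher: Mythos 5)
Your route (Faedo--Galerkin in the clamped biharmonic eigenbasis, a priori bounds via Lemma \ref{pr:q}, Aubin--Lions for the Berger term, Gronwall for uniqueness) is exactly the approach the paper intends: the paper does not prove Lemma \ref{p:well} itself but defers to the Galerkin constructions in \cite{Chu92b} and \cite[Section 3.3.1]{springer}, of which your outline is a faithful reproduction. Two steps, however, are not correct as literally stated. First, the reduction ``on successive intervals $[mt^*,(m+1)t^*]$ to a classical ODE with prescribed right-hand side'' fails: $q(u^t)$ in \eqref{potential} is a \emph{distributed} delay over $s\in(0,t^*)$, so on any such interval it involves values of $u$ inside the current interval, not only data from the previous one. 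This is harmless but needs a different justification: the memory term is a causal, linear, bounded Volterra functional of the history, so local (hence, with your energy bound, global) existence for the finite-dimensional system follows from standard functional ODE theory (Picard iteration in $C([0,T_0];\mathbb{R}^N)$), not from the method of steps.

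Second, and more substantively, your derivation of the energy identity and of uniqueness glosses over the usual regularity obstruction. Passing ``term by term'' to the limit in the Galerkin energy identity only yields an energy \emph{inequality}, since $E_{pl}(u^N(t))$ and $\int\|u_t^N\|^2$ are merely weakly lower semicontinuous; and testing the difference $w=u^{(1)}-u^{(2)}$ of two weak solutions with $w_t$ is not admissible, because $w_t\in L^\infty(0,T;L^2(\Omega))$ only, while $w_{tt}$ lives in $H^{-2}(\Omega)$. The standard repair, available here precisely because the problem is subcritical, is to observe that under the a priori bounds the full right-hand side $p_0-(k+1)u_t-f_B(u)-Uu_x+q(u^t)$ lies in $L^\infty(0,T;L^2(\Omega))$ (using \eqref{f-est-lip} with $\delta=0$ and \eqref{qnegest2}--\eqref{qnegest3}), so $u$ solves a \emph{linear} plate equation with $L^2$ forcing; linear theory (equivalently, a mollification-in-time argument, or the Ladyzhenskaya/Lions--Magenes test function $\int_t^s w(\tau)\,d\tau$) then gives $u\in C([0,T];H_0^2(\Omega))\cap C^1([0,T];L^2(\Omega))$ together with the identity \eqref{energyrelation}, and the same device applied to $w$ (whose forcing difference is again in $L^2$ by \eqref{f-est-lip} and \eqref{qnegest2}) legitimizes the Gronwall argument for uniqueness. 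With these two repairs your proof is complete and matches the cited construction.
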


Careful analysis of the estimates in Lemma \ref{pr:q} yield the estimates below.
 \begin{lemma}\label{le:q}
For $q(u^t)$ as in \eqref{potential}, we have $\forall \e>0$
\begin{align}\label{hidden1}
\Big|\int_0^t \langle q(u^{\tau}),u_t(\tau)\rangle d\tau\Big| \le&~   C\e^{-1} t^*  \int_{-t^*}^t||u(\tau)||_2^2d\tau +\e \int_0^t ||u_t(\tau)||^2d\tau,~~~0 \le t\le T,
\end{align}
for any  $u \in L^2(-t^*,T;H^2(\Omega))\cap H^1(0,T;L^2(\Omega))$.
Also, there exists $\eta_*>0$ such that for every $\epsilon>0$  we have the estimate:
\begin{align}\label{hidden2} \Big|\int_0^t \langle q(u^t,\tau),u_t(\tau)\rangle d\tau\Big| \le& ~ \epsilon\int_{-t^*}^t ||u(\tau)||_{2}^2d\tau +C(t^*,\e)\cdot(1+T)\sup_{[0,t]}||u(\tau)||^2_{2-\eta_*},
\end{align}
 for  any $u \in L^2(-t^*,T;H^2(\Omega))\cap C(0,T;H^{2-\eta}(\Omega))\cap C^1(0,T;L^2(\Omega))$.
 \end{lemma}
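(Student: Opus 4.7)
The plan is to prove the two estimates by distinct strategies: \eqref{hidden1} is a direct Cauchy--Schwarz/Young's inequality argument combined with the $L^2_\tau L^2_x$ bound \eqref{qnegest3} from Lemma \ref{pr:q}, whereas \eqref{hidden2} requires integration by parts in $\tau$ to move the time derivative off $u_t$, followed by careful use of the $H^{-1}$ estimates \eqref{qnegest} and \eqref{qnegest4} on $q(u^\tau)$ and $\partial_\tau q(u^\tau)$.

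For \eqref{hidden1}, I would start from the pointwise-in-$\tau$ Cauchy--Schwarz/Young splitting
\begin{equation*}
|\langle q(u^\tau),u_t(\tau)\rangle| \le \tfrac{1}{4\e}\|q(u^\tau)\|^2 + \e\, \|u_t(\tau)\|^2,
\end{equation*}
integrate over $\tau\in[0,t]$, and invoke \eqref{qnegest3} for the first term, which delivers a factor $\int_{-t^*}^t \|u(\tau)\|_2^2 d\tau$ times a $t^*$-dependent constant. Powers of $t^*$ are absorbed into $C$, yielding \eqref{hidden1}.

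For \eqref{hidden2}, the key step is the identity
\begin{equation*}
\int_0^t \langle q(u^\tau),u_t(\tau)\rangle d\tau = \langle q(u^t),u(t)\rangle - \langle q(u^0),u(0)\rangle - \int_0^t \langle \partial_\tau q(u^\tau),u(\tau)\rangle d\tau.
\end{equation*}
The two boundary terms are controlled via the $H^{-1}$ bound \eqref{qnegest} paired with the $H^1$-norm of $u$; the resulting factors of $\|u\|_1^2$ are dominated by $\sup_{[-t^*,t]}\|u\|_{2-\eta_*}^2$ for any $\eta_*\in(0,1)$ through the continuous embedding $H^{2-\eta_*}(\Omega)\hookrightarrow H^1(\Omega)$, so these contributions land inside the $\sup$-term on the right of \eqref{hidden2}. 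For the interior term, I would apply \eqref{qnegest4} together with Cauchy--Schwarz in the inner $s$-integral to obtain
\begin{equation*}
\|\partial_\tau q(u^\tau)\|_{H^{-1}} \le C\Big(\|u(\tau)\|_1 + \|u(\tau-t^*)\|_1 + (t^*)^{1/2}\Big(\int_{-t^*}^0\|\Delta u(\tau+s)\|^2 ds\Big)^{1/2}\Big),
\end{equation*}
then pair against $\|u(\tau)\|_1$ and apply Young's inequality with small weight $\e$ only on the $\Delta u$-piece. This reserves the small parameter for the $\int_{-t^*}^t \|u(\tau)\|_2^2\, d\tau$ term; the remaining low-order factors $\|u(\tau)\|_1^2$ and $\|u(\tau-t^*)\|_1^2$, integrated over $\tau\in[0,t]\subset[0,T]$, contribute at most $(1+T)\sup_{[-t^*,t]}\|u\|_1^2$, which is absorbed into $(1+T)\sup_{[0,t]}\|u\|_{2-\eta_*}^2$ after handling the finite history contribution supported on $[-t^*,0]$.

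The main obstacle is bookkeeping: one must ensure that every $\|u\|_1$-type low-order factor produced by the integration by parts or the $\partial_\tau q$ estimate is routed into the prefactor of $\sup_{[0,t]}\|u\|_{2-\eta_*}^2$, rather than contaminating the $\int_{-t^*}^t\|u\|_2^2\,d\tau$ term, which must carry the small coefficient $\e$. The compactness-type embedding $H^{2-\eta_*}(\Omega)\hookrightarrow H^1(\Omega)$ is immediate, so once the integration-by-parts identity is in place and Young's inequality is applied with the weights distributed as above, \eqref{hidden2} follows. I expect the resulting estimate to be exactly the quasi-stability-ready form needed later: a small multiple of the ``high'' norm over the delay window plus a large multiple of a ``compact seminorm'' in the structural component.
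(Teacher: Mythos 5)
Your overall strategy is exactly the intended one: the paper omits the proof of this lemma (deferring to \cite{delay} and to ``careful analysis'' of Lemma \ref{pr:q}), but the remark at the end of Section \ref{delayplatesec} identifies the mechanism behind \eqref{hidden2} as precisely your step --- integrate by parts in $\tau$ to move the derivative onto $q$ and exploit the hidden compactness in \eqref{qnegest4} --- while \eqref{hidden1} is, as you say, just Young's inequality plus \eqref{qnegest3} (your $(t^*)^2$ versus the stated $t^*$ is harmless constant bookkeeping).

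However, the bookkeeping for \eqref{hidden2} goes wrong in one concrete place: the factors supported on the history interval $(-t^*,0)$. You send the boundary terms and the shifted factor $\|u(\tau-t^*)\|_1$ into ``$\sup_{[-t^*,t]}\|u\|_{2-\eta_*}^2$'' and claim this lands in the sup on the right of \eqref{hidden2}. But (i) under the stated hypotheses $u$ is only $L^2(-t^*,T;H^2(\Omega))$ on the history interval, so no pointwise-in-time norm there is controlled and $\sup_{[-t^*,0]}$ is simply not available; and (ii) the right-hand side of \eqref{hidden2} contains only $\sup_{[0,t]}\|u\|_{2-\eta_*}^2$, which cannot dominate history quantities. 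The affected terms are: $\langle q(u^0),u(0)\rangle$, whose $H^{-1}$ bound \eqref{qnegest} involves $\int_{-t^*}^0\|u\|_1^2$; $\langle q(u^t),u(t)\rangle$ when $t<t^*$; and the term $\|u(\tau-t^*)\|_1$ from \eqref{qnegest4} for $\tau\in[0,t^*)$. The correct routing is the opposite of the principle you state: every history-supported low-order factor must be bounded via $\|\cdot\|_1\le\|\cdot\|_2$ and sent, carrying the small Young weight, into $\epsilon\int_{-t^*}^t\|u(\tau)\|_2^2\,d\tau$ --- e.g. $\|u(\tau-t^*)\|_1\|u(\tau)\|_1\le \tfrac{\epsilon}{2}\|u(\tau-t^*)\|_2^2+C\epsilon^{-1}\|u(\tau)\|_1^2$ followed by a shift of the integration variable, and $|\langle q(u^0),u(0)\rangle|\le \epsilon\|q(u^0)\|_{-1}^2+C_\epsilon\|u(0)\|_1^2$ --- while only factors evaluated at times in $[0,t]$ may be absorbed into $C(t^*,\epsilon)(1+T)\sup_{[0,t]}\|u\|_{2-\eta_*}^2$. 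With the weights redistributed this way (and a routine density argument to justify using \eqref{qnegest4} at the stated regularity), your argument closes and coincides with the intended proof.
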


We can now introduce the operator
$ T_t\, : \bH\mapsto \bH$ by the formula
\begin{equation}\label{semigroup}
  T_t(u_0, u_1, \eta) \equiv (u(t), u_t(t), u^t),
\end{equation}
where $u(t)$ solves \eqref{plate}.
   Lemma \ref{p:well} implies the following conclusion:
 \begin{corollary}\label{co:generation}
 $T_t$ is a strongly continuous semigroup on $\mathbf{H}$.
 \end{corollary}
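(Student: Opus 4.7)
The plan is to verify the three properties of a strongly continuous (nonlinear) semigroup on $\bH$: (i) $T_0 = \mathrm{Id}$, (ii) the composition rule $T_{t+s} = T_t \circ T_s$, and (iii) continuity, by which I mean joint continuity of $t \mapsto T_t x_0$ from $[0,\infty)$ into $\bH$ for each $x_0 \in \bH$, together with continuous dependence of $T_t x_0$ on $x_0$ on bounded time intervals. Property (i) is immediate from \eqref{semigroup}. The remaining pieces follow from Lemma \ref{p:well} and standard translation continuity in $L^2$-valued $L^2$ spaces.

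For (ii), I would exploit the autonomous character of \eqref{plate}: neither the coefficients nor the delay potential $q(u^{\cdot})$ depends on $t$ explicitly, only on the history segment. Fix $x_0 = (u_0,u_1,\eta) \in \bH$, and let $u$ be the unique weak solution supplied by Lemma \ref{p:well}. First I observe that $T_s x_0 = (u(s), u_t(s), u^s) \in \bH$, since the function $u^s(\sigma) := u(s+\sigma)$ for $\sigma\in(-t^*,0)$ combines $\eta$ on the part of the interval where $\sigma<-s$ with the regular piece $u \in C([0,s];H_0^2(\Omega))$ on $\sigma>-s$ (the first piece being vacuous when $s\ge t^*$). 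Then $v(\tau) \equiv u(\tau + s)$ is a weak solution of \eqref{plate} on $[0,\infty)$ with initial data $T_s x_0$, and uniqueness in Lemma \ref{p:well} yields $T_t(T_s x_0) = (v(t), v_t(t), v^t) = T_{t+s} x_0$.

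For (iii), continuity of $t \mapsto (u(t), u_t(t))$ into $H_0^2(\Omega) \times L^2(\Omega)$ is immediate from the regularity $u \in C([0,T]; H_0^2(\Omega)) \cap C^1([0,T]; L^2(\Omega))$ asserted by Lemma \ref{p:well}. For the history coordinate I extend $u$ to $(-t^*,\infty)$ by prepending $\eta$, so that this extension lies in $L^2_{\mathrm{loc}}(-t^*,\infty; H_0^2(\Omega))$; continuity of $t \mapsto u^t$ into $L^2(-t^*,0;H_0^2(\Omega))$ is then the standard translation continuity of $L^p$-valued $L^p$ spaces, proved by approximation by continuous functions. Continuous dependence on initial data is a difference-equation energy estimate on $w = u^{(1)} - u^{(2)}$: Berger's nonlinearity $f_B$ is locally Lipschitz on $H_0^2(\Omega)$, the lower-order term $-Uw_x$ is bounded from $H_0^2$ to $L^2$, and the delay contribution is controlled via \eqref{qnegest2}, so Gronwall's lemma closes the estimate in the plate coordinates and, by restriction to the shifted interval $(t-t^*,t)$, in the history coordinate as well.

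The only mildly delicate point is the history coordinate for $t\in[0,t^*]$, where $u^t$ is spliced from $\eta$ on $(-t^*,-t)$ and the newly generated dynamics $u$ on $(-t,0)$; the continuity in $t$ in $L^2(-t^*,0;H_0^2(\Omega))$ at such times reduces to the $L^2$-translation statement above applied separately on the two subintervals whose common endpoint moves with $t$, the transition being absorbed by the $L^2$ norm since the symmetric difference of the splitting intervals for times $t$ and $t+h$ has measure $|h|$. All other bookkeeping is routine.
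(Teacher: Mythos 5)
Your proposal is correct and follows essentially the same route as the paper: the paper's proof likewise reads off strong continuity in the plate coordinates from Lemma \ref{p:well}, gets the semigroup property from uniqueness (using the autonomous character of \eqref{plate}), and handles continuous dependence on data via the Lipschitz estimate of Lemma \ref{l:lip}, whose proof is exactly the Gr\"onwall/energy argument you sketch. The only difference is that you spell out details the paper leaves implicit (the $L^2$-translation continuity of the history coordinate and the splicing of $\eta$ with the new dynamics for $t\in[0,t^*]$), which is harmless.
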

\begin{proof}
Strong continuity is stated in Lemma~\ref{p:well}.
 The semigroup property  follows from uniqueness.
Continuity with respect to initial data follows from the stronger Lipschitz property given below. \end{proof}
\begin{lemma}\label{l:lip}
Suppose $u^i(t)$ for $i=1,2$ are weak solutions to (\ref{plate}) with different initial data and $z=u^1-u^2$. Additionally, assume that
\begin{equation}\label{bnd-R}
||u_t^i(t)||^2+||\Delta u^i(t)||^2 \le R^2, ~i=1,2
\end{equation}
 for some $R>0$ and all $t \in [0,T]$. Then there exists $C>0$ and $a_R\equiv a_R(t^*)>0$  such that
\begin{align}\label{dynsysest} \nonumber
||z_t(t)||^2+||\Delta z(t)||^2 \le & \\ Ce^{a_Rt}\Big\{||\Delta(u^1_0&-u^2_0)||^2+||u^1_1-u^2_1||^2+\int_{-t^*}^0||\eta^1(\tau)-\eta^2(\tau)||_2^2d\tau\Big\}
\end{align}
for all $t \in [0,T]$.
\end{lemma}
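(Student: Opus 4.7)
The plan is to derive a standard energy/Gronwall estimate for the difference $z = u^1 - u^2$, which satisfies the equation
\begin{equation*}
z_{tt} + \Delta^2 z + (k+1)z_t + \big[f_B(u^1) - f_B(u^2)\big] = -Uz_x + q(z^t),
\end{equation*}
with initial data $z(0) = u_0^1 - u_0^2$, $z_t(0) = u_1^1 - u_1^2$, and history $z|_{(-t^*,0)} = \eta^1 - \eta^2$. First I would test the equation against $z_t$ to obtain an identity for $\widetilde{E}(t) \equiv \tfrac{1}{2}(\|z_t(t)\|^2 + \|\Delta z(t)\|^2)$, namely
\begin{equation*}
\widetilde{E}(t) + (k+1)\int_0^t \|z_t\|^2\,d\tau = \widetilde{E}(0) + \int_0^t \big\langle -\big[f_B(u^1)-f_B(u^2)\big] - Uz_x + q(z^{\tau}),\, z_t\big\rangle\,d\tau.
\end{equation*}
This is justified for weak solutions by density/approximation (or by direct variational manipulation, as is standard for second-order plate equations).

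Next I would estimate each term on the right. Using the a priori bound \eqref{bnd-R} and the fact that Berger's nonlinearity $f_B(u) = (b_1 - b_2\|\nabla u\|^2)\Delta u$ is locally Lipschitz on $H_0^2(\Omega)$, one gets $\|f_B(u^1)-f_B(u^2)\| \le C(R)\|\Delta z\|$, so
\begin{equation*}
\big|\langle f_B(u^1)-f_B(u^2), z_t\rangle\big| \le C(R)\big(\|\Delta z\|^2 + \|z_t\|^2\big).
\end{equation*}
The lower-order term $-Uz_x$ is bounded similarly by $C(\|\Delta z\|^2 + \|z_t\|^2)$. For the delay term I would invoke \eqref{qnegest2} of Lemma \ref{pr:q}, which gives $\|q(z^{\tau})\|^2 \le c\, t^* \int_{\tau-t^*}^{\tau}\|\Delta z(s)\|^2\,ds$, and then Cauchy--Schwarz.

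The main technical point—though not a deep obstacle—is handling the delay integral in a form amenable to Gronwall. After assembling bounds I would have
\begin{equation*}
\widetilde{E}(t) \le \widetilde{E}(0) + C(R,t^*)\int_0^t \widetilde{E}(\tau)\,d\tau + C(t^*)\int_0^t \int_{\tau-t^*}^{\tau} \|\Delta z(s)\|^2\,ds\,d\tau.
\end{equation*}
Applying Fubini to the double integral,
\begin{equation*}
\int_0^t \int_{\tau-t^*}^{\tau} \|\Delta z(s)\|^2\,ds\,d\tau \le t^*\int_{-t^*}^{t}\|\Delta z(s)\|^2\,ds = t^*\int_{-t^*}^{0}\|\Delta(\eta^1-\eta^2)(s)\|^2\,ds + t^*\int_0^t \|\Delta z(s)\|^2\,ds,
\end{equation*}
which absorbs the history contribution into the initial data and leaves an integrand controlled by $\widetilde{E}$. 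A direct application of Gronwall's inequality with constant $a_R = a_R(t^*)$ depending on $R$ and $t^*$ then yields \eqref{dynsysest}. The argument is essentially soft: the nonlinearity is tame on bounded sets in $H_0^2$, the non-gradient term $-Uz_x$ is lower order, and the delay contribution is handled by the $L^2$-in-time bound \eqref{qnegest2} combined with Fubini; no hidden regularity or sharp trace estimates are needed at this stage, which is why the estimate holds for any pair of weak solutions satisfying the a priori bound.
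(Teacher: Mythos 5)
Your proposal is correct and follows essentially the same route as the paper (which defers details to the delay reference): an energy identity for the difference $z$, the local Lipschitz bound \eqref{f-est-lip} on $f_B$ over the ball of radius $R$, an $L^2$ bound on the delay term, and Gr\"onwall. Your use of \eqref{qnegest2} plus Fubini is just an unpacked version of the estimate \eqref{hidden1} in Lemma \ref{le:q} that the paper cites, so there is no substantive difference in approach.
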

We omit the details of these proofs here and refer to \cite{delay}. It suffices to say that energy methods are used, along with Lemma \ref{le:q} and an application of Gr\"{o}nwall.
We conclude this section with a remark about the case when additional velocity smoothing is present---namely, when rotary inertia or thermal effects are included in the model and $u_t \in H_0^1(\Omega)$.
\begin{remark}
{\rm
A priori, when $u_t$ is in $H^1_0(\Omega)$, it is clear from \eqref{qnegest} that \begin{equation}\label{thisone}
 \int_0^T\langle q(u^\tau),u_t(\tau)\rangle d\tau\le
 \epsilon \int_0^T ||u_t(\tau)||^2_1+C(\epsilon,T)\sup_{\tau \in [-t^*,T]}||u(\tau)||^2_{1}.
\end{equation}
 This is not at all apparent when $u_t \in L^2(\Omega)$ only, as $||q(u^t)||^2_0$ has no a priori bound from above like \eqref{qnegest}.
Hence, the critical component which allows us a transition from rotational inertia ($\alpha>0$) to the non-rotational case ($\alpha=0$) is the hidden compactness of the aforementioned term displayed by \eqref{qnegest4} obtained from integrating by parts under time integration in the LHS of \eqref{thisone}.
}
\end{remark}

\section{Supporting Technical Results}\label{techstuff}
	\subsection{Basic Estimates}
Consider the difference of two weak solutions $u^i$, $i=1,2$ to \eqref{plate}, satisfying:
\begin{equation}\label{difference}
\begin{cases}z_{tt}+\Delta ^2 z + (k+1) z_t+f_B(u^1)-f_B(u^2)=q(z^t)-Uz_x,
 \\ z=\Dn z = 0 \text{ on $\partial \Omega$ },
 \\ z(0)=z_0 \in H^2_0(\Omega), ~z_t(0)=z_1 \in L^2(\Omega), ~z|_{(-t^*,0)}\in L^2(-t^*,0;H_0^2(\Omega)).
 \end{cases}
\end{equation}
We take this equation with the notations:
\begin{equation}\label{Ez} z=u^1-u^2;~~ E_z(t) \equiv \dfrac{1}{2}\Big\{||\Delta z(t)||^2 + ||z_t(t)||^2\Big\};~~\cF(z)=f(u^1)-f(u^2).\end{equation}

We will utilize (in several places) a  key decomposition of the term $\langle \cF(z),z_t\rangle_{\Omega}$ for the Berger nonlinearity $f_B$. 
The results stated in the following theorem can be found in \cite{Memoires,gw}, though we provide the key details below:
\begin{theorem}\label{nonest}
Let $u^i \in \mathscr{B}_R(H^2_0(\Omega))$, $i=1,2$.
 Then  we have:
\begin{equation}\label{f-est-lip}
||f(u^1)-f(u^2)||_{-\delta}  \le C_{\delta}\Big(||u^1||_2,||u^2||_2\Big)||z||_{2-\delta} \le C(\delta,R)||z||_{2-\delta},~~\forall~\delta \in [0,1].
\end{equation}
In addition, for $u^1,u^2 \in
C((s,t);(H^2\cap H_0^1)(\Omega))\cap C^1((s,t);L^2(\Omega))$, then we have:
\begin{equation*}
\big\langle \cF(z),z_{t}\big\rangle_{\Omega} =\dfrac{1}{2}\frac{d}{dt}Q_1(z)+P_1(z)
\end{equation*}
where
\begin{equation*}
Q_1(z)=b_2||\nabla u^1||^2||\nabla z||^2-b_1||\nabla z||^2
\end{equation*}
and
\begin{equation}\label{p1}
P_1(z)=b_2\langle \Delta u^1,u^1_t\rangle||\nabla z||^2-b_2\big(||\nabla u^1||^2-||\nabla u^2||^2\big)\langle\Delta u^2,z_t\rangle.
\end{equation}
\end{theorem}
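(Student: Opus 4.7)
\textbf{Proof proposal for Theorem \ref{nonest}.}

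The plan is to treat the two assertions separately, both by algebraic manipulation of the Berger nonlinearity $f_B(u)=[b_1-b_2\|\nabla u\|^2]\Delta u$. The essential observation is the telescoping identity
\begin{equation*}
f(u^1)-f(u^2)=\bigl[b_1-b_2\|\nabla u^1\|^2\bigr]\Delta z \;-\; b_2\bigl(\|\nabla u^1\|^2-\|\nabla u^2\|^2\bigr)\Delta u^2,
\end{equation*}
obtained by adding and subtracting $b_2\|\nabla u^1\|^2\Delta u^2$. Note that the scalars $\|\nabla u^i\|^2$ and $\|\nabla u^1\|^2-\|\nabla u^2\|^2=\langle\nabla(u^1+u^2),\nabla z\rangle$ are uniformly controlled by $R^2$ and $R\|\nabla z\|$ respectively, using $u^i\in\mathscr B_R(H^2_0(\Omega))$.

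For the Lipschitz estimate \eqref{f-est-lip}, the first of these two pieces satisfies
$\|[b_1-b_2\|\nabla u^1\|^2]\Delta z\|_{-\delta}\le C(|b_1|,b_2,R)\|z\|_{2-\delta}$ since $\Delta:H^{2-\delta}\to H^{-\delta}$ is bounded. For the second piece, the scalar factor $|\|\nabla u^1\|^2-\|\nabla u^2\|^2|\le C(R)\|\nabla z\|\le C(R)\|z\|_{2-\delta}$ (the last inequality holds because $2-\delta\ge 1$ for $\delta\in[0,1]$), while $\|\Delta u^2\|_{-\delta}\le\|u^2\|_{2-\delta}\le R$. Multiplying gives the required bound, and both constants depend only on $\delta$ and $R$.

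For the second assertion, I pair the same telescoping decomposition against $z_t$:
\begin{equation*}
\langle\cF(z),z_t\rangle_{\Omega}=\bigl[b_1-b_2\|\nabla u^1\|^2\bigr]\langle\Delta z,z_t\rangle-b_2\bigl(\|\nabla u^1\|^2-\|\nabla u^2\|^2\bigr)\langle\Delta u^2,z_t\rangle.
\end{equation*}
The second term is exactly the second half of $P_1(z)$. For the first term, I use $\langle\Delta z,z_t\rangle=-\tfrac12\tfrac{d}{dt}\|\nabla z\|^2$ (valid since $z\in C(H^2_0)\cap C^1(L^2)$) and then regroup via the product rule:
\begin{equation*}
b_2\|\nabla u^1\|^2\cdot\tfrac12\tfrac{d}{dt}\|\nabla z\|^2=\tfrac12\tfrac{d}{dt}\!\bigl[b_2\|\nabla u^1\|^2\|\nabla z\|^2\bigr]-\tfrac{b_2}{2}\tfrac{d}{dt}\|\nabla u^1\|^2\cdot\|\nabla z\|^2.
\end{equation*}
Since $u^1\in H^2\cap H^1_0$ with $u^1_t\in L^2$, integration by parts gives $\tfrac12\tfrac{d}{dt}\|\nabla u^1\|^2=\langle\nabla u^1,\nabla u^1_t\rangle=-\langle\Delta u^1,u^1_t\rangle$, and combining with the $-\tfrac{b_1}{2}\tfrac{d}{dt}\|\nabla z\|^2$ contribution collects precisely $\tfrac12\tfrac{d}{dt}Q_1(z)+b_2\langle\Delta u^1,u^1_t\rangle\|\nabla z\|^2$, which together with the leftover piece is $\tfrac12\tfrac{d}{dt}Q_1(z)+P_1(z)$ as claimed.

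The only point requiring care is the validity of these time derivatives and integration by parts under the low regularity $z_t,u^1_t\in L^2$: the pairings $\langle\Delta u^i,u^i_t\rangle,\langle\Delta u^2,z_t\rangle,\langle\Delta z,z_t\rangle$ make pointwise sense in the $L^2(\Omega)$ duality, and the time derivative identities should be read in the sense of distributions on $(s,t)$ (or equivalently justified by density on smooth approximants and then passed through using the continuity assumptions on the $u^i$). I anticipate this bookkeeping to be the only mildly delicate step; the algebra itself is entirely straightforward, which is the point of isolating the identity in this lemma for later use in the quasi-stability estimate.
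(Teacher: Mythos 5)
Your proposal is correct and follows essentially the same route as the paper: the identical telescoping decomposition of $f(u^1)-f(u^2)$, the bound $\big|\|\nabla u^1\|^2-\|\nabla u^2\|^2\big|\le C(R)\|z\|_1$, and the same pairing with $z_t$ plus product rule and integration by parts (justified by the clamped/Dirichlet boundary conditions) to produce $\tfrac12\tfrac{d}{dt}Q_1(z)+P_1(z)$. The only cosmetic difference is that you estimate the $H^{-\delta}$ norm directly for all $\delta\in[0,1]$, whereas the paper proves the $\delta=0$ case and obtains the rest ``through transposition''; the content is the same.
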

\begin{proof}
Letting $z=u^1-u^2$, and letting $B(u)=(b_1-b_2||\nabla u||^2)$, we note two facts immediately:
\begin{align}\label{dif}
B(u^1)\Delta u^1-B(u^2)\Delta u^2= & ~b_1\Delta z -b_2\big[||\nabla u^1||^2\Delta u^1-||\nabla u^2||\Delta u^2\big] \\\nonumber
=&~b_1\Delta z -b_2\big[||\nabla u^1||^2\Delta z+(||\nabla u^1||^2-||\nabla u^2||^2)\Delta u^2\big]\\[.2cm]
\label{difff}\left|~||\nabla u^1||^2-||\nabla u^2||^2~\right|=&~\Big|||\nabla u^1||-||\nabla u^2||\Big|\left(||\nabla u^1||+||\nabla u^2||\right)\\ \nonumber
\le &~\left(||\nabla u^1||+||\nabla u^2||\right)||\nabla u^1 - \nabla u^2||~\le ~C(R)||z||_1,
\end{align}
From here, note that 
\begin{align*}
|| \mathcal F(z) ||_{L^2(\Omega)} = &~ || B(u^1)\Delta u^1-B(u^2) \Delta u^2 || \\
 \le & ~b_1||\Delta z|| 
 \\ &+b_2\big|\big|[||\nabla u^1||^2\Delta u^1-||\nabla u^1||^2\Delta u^2+||\nabla u^1||^2\Delta u^2-||\nabla u^2||^2\Delta u^2]\big|\big|\\
 \le &~b_1||\Delta z||+b_2||\nabla u^1||^2||\Delta z||+||\Delta u^2||\big[||\nabla u^1||^2-||\nabla u^2||^2\big]\\
 \le & ~C(b_1,b_2,R)||z||_{H_0^2(\Omega)}.
\end{align*}
The result then follows for all $\delta \in [0,1]$ through transposition.

Now, for the decomposition, we have:
\begin{align*}
\langle \mathcal F(z),z_t\rangle =&~ b_1\langle \Delta z, z_t\rangle -b_2\langle ||\nabla u^1||^2\Delta z,z_t\rangle - b_2\left\langle \Delta u^2[||\nabla u^1||^2-||\nabla u^2||^2],z_t\right \rangle\\
=&~ \dfrac{1}{2}\dfrac{d}{dt}\Big[-b_1||\nabla z||^2+b_2||\nabla u^1||^2||\nabla z||^2 \Big]-\dfrac{b_2}{2}||\nabla z||^2 \dfrac{d}{dt}||\nabla u^1||^2\\
& -b_2 [||\nabla u^1||^2-||\nabla u^2||^2]\langle\Delta u^2,z_t \rangle.
\end{align*}
Above, we have freely integrated by parts (invoking the boundary conditions on each $u^1,u^2 \in H_0^2(\Omega)$).
The result follows via one more time differentiation and integration by parts.
\end{proof}

\begin{lemma}
Let $u^i \in C(0,T;H_0^2(\Omega))\cap C^1(0,T;L^2(\Omega)) \cap L^2(-t^*,T;H_0^2(\Omega))$ solve (\ref{plate}) with appropriate initial conditions on $[0,T]$ for $i=1,2$. Then the following estimate holds for all $\e >0$, for some $\eta > 0$, and $0 \le t \le T$:
\begin{align}\nonumber
\int_0^t \big(||\Delta u||^2-||u_t||^2\big)d\tau \le&~\epsilon+\epsilon \int_0^t||u||^2_2 d\tau + C\int_{-t^*}^0 ||u(\tau)||_2^2 d\tau \\&-\int_0^t\langle f(u),u\rangle d\tau + |\langle u_t(t), u(t)\rangle | +|\langle u_t(0), u(0)\rangle | \\
& + C(\epsilon,t^*,T)\sup_{\tau \in [0,t]} ||u(\tau)||^2_{2-\eta}.
\nonumber
\end{align}
Moreover, in the case where we are considering the difference $z=u^1-u^2$ of solutions solving (\ref{difference}) with $(u^i(t),u^i_t(t)) \in \mathscr B_R(Y_{pl})$ for all $t\in [0,T]$, we may utilize the estimates in Theorem~\ref{nonest}  to obtain
\begin{align}\label{zmult}
\int_s^t \big(||\Delta z||^2-||z_t||^2\big)d\tau
\le &~\epsilon \int_s^t ||z||_2^2 d\tau +C\int_{s-t^*}^t ||z(\tau)||_{2-\s}^2 d\tau  \notag \\
&+ C(\epsilon,T,R)\sup_{\tau \in [0,t]} ||z(\tau)||^2_{2-\eta}+ E_z(t)+E_z(s),
\end{align}
where $E_z(t)$ is given by \eqref{Ez}.
\end{lemma}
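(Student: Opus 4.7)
My plan is to derive both estimates by the standard energy-equipartition multiplier: test the plate equation \eqref{plate} by $u$ (resp.\ the difference equation \eqref{difference} by $z$) in $L^2(\Omega)$ and integrate over the time window. Integration by parts in time converts $\langle u_{tt},u\rangle$ into $[\langle u_t,u\rangle]_0^t - \int_0^t \|u_t\|^2 d\tau$, while $\langle\Delta^2 u,u\rangle$ produces $\int_0^t \|\Delta u\|^2 d\tau$. The lower-order flow contribution $U\langle u_x,u\rangle$ vanishes identically for clamped data since $\langle u_x,u\rangle = \tfrac{1}{2}\int_\Omega \partial_x(u^2)\,dx = 0$ on $H_0^2(\Omega)$. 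Every remaining term will be sorted into one of three buckets: explicit boundary/nonlinearity contributions that are left in the statement; $\e$-small multiples of $\int \|\cdot\|_2^2 d\tau$ absorbed on the RHS; or compact lower-order contributions controlled by $\sup_{[0,t]}\|\cdot\|^2_{2-\eta}$ via $H^{2-\eta} \hookrightarrow\hookrightarrow H^2$.

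For the first estimate, after the multiplier step I will be left with the identity
\[
\int_0^t (\|\Delta u\|^2 - \|u_t\|^2) d\tau = \langle u_t(0),u(0)\rangle - \langle u_t(t),u(t)\rangle - (k{+}1)\int_0^t \langle u_t,u\rangle d\tau - \int_0^t \langle f(u),u\rangle d\tau + \int_0^t \langle p_0 + q(u^\tau),u\rangle d\tau.
\]
The damping cross-term telescopes as $\tfrac{k+1}{2}(\|u(t)\|^2 - \|u(0)\|^2)$ and is absorbed into the compact sup-norm term via $L^2 \hookrightarrow H^{2-\eta}$. The $\langle p_0,u\rangle$ piece produces a constant (written as $\e$ in the statement) plus an $\e \int_0^t \|u\|_2^2 d\tau$ contribution by Young's inequality. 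The delicate term is $\int_0^t \langle q(u^\tau),u\rangle d\tau$: I will bound it via Cauchy--Schwarz, Young with parameter $\e' = \e/(c[t^*]^2)$, and then \eqref{qnegest3}, yielding
$\e\int_{-t^*}^t \|u\|_2^2 d\tau + C(\e,t^*,T)\sup_{[0,t]}\|u\|^2_{2-\eta}$; splitting the delay integral $\int_{-t^*}^t = \int_{-t^*}^0 + \int_0^t$ matches the $C\int_{-t^*}^0 \|u\|_2^2 d\tau$ and $\e \int_0^t \|u\|_2^2 d\tau$ terms of the claim.

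The second estimate follows by running the same multiplier on \eqref{difference}. Here the boundary terms satisfy $|\langle z_t(\tau),z(\tau)\rangle| \le E_z(\tau) + C\|z(\tau)\|^2_{2-\eta}$, supplying the explicit $E_z(t)+E_z(s)$ terms. For the nonlinear contribution I will apply \eqref{f-est-lip} with $\delta = 1$ so that $|\langle \mathcal F(z),z\rangle| \le \|\mathcal F(z)\|_{-1}\|z\|_1 \le C(R)\|z\|_{2-\eta}^2$ uniformly on $\mathscr B_R$, whence time integration gives $C(T,R)\sup\|z\|^2_{2-\eta}$. For the delay cross-term I will use duality in $H^{-1}\times H^1_0$ combined with \eqref{qnegest} and a Fubini swap, giving $\int_s^t \|q(z^\tau)\|_{-1}^2 d\tau \le c[t^*]^2 \int_{s-t^*}^t \|z\|_1^2 d\tau$; a final Young produces $\e\sup\|z\|^2_{2-\eta} + C(t^*)\int_{s-t^*}^t \|z\|^2_{2-\sigma} d\tau$ with $\sigma = 1$.

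The only real obstacle is bookkeeping: ensuring that the ``bad'' $H^2$ integral appears only on the in-interval piece with an arbitrarily small prefactor $\e$, while compact lower-order integrals are permitted to appear on the enlarged interval $[-t^*,0]$ (resp.\ $[s-t^*,t]$) with $O(1)$ coefficients. The hidden-compactness structure of the delay potential encoded in Lemma \ref{pr:q} --- namely, that $q(u^t)$ is controlled in $L^2$ by a time integral of $\|\Delta u\|^2$ and in $H^{-1}$ by a time integral of $\|u\|_1^2$ --- is exactly what allows this redistribution.
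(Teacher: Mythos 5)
Your multiplier argument (testing \eqref{plate} by $u$ and \eqref{difference} by $z$, integrating in time, and sorting terms via Lemma \ref{pr:q}, the interpolation $\|\cdot\|_{2-\eta}$, and Theorem \ref{nonest}) is correct and is exactly the standard equipartition estimate the paper has in mind for this lemma, which it states without written proof. The only cosmetic adjustment: to get the standalone $\epsilon$ literally, bound $\int_0^t\langle p_0,u\rangle\,d\tau \le \|p_0\|\,T\,C\sup_{[0,t]}\|u\|_{2-\eta}$ and apply Young to split it as $\epsilon + C(\epsilon,T,p_0)\sup_{[0,t]}\|u\|^2_{2-\eta}$, rather than leaving a fixed constant $C(\epsilon)\|p_0\|^2T$.
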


The final class of estimates we  need are energy estimates for the $(z,z_t)$ terms defined as the solution to (\ref{difference}). 
The energy relation on $[s,t]$ for $z$ in (\ref{difference}) is given by
\begin{align}\label{energyfordelay}
\Ez(t)+(k+1)\int_s^t ||z_t||^2 d\tau =& ~\Ez(s) -\int_s^t \langle \cF(z),z_t\rangle d\tau +\int_s^t\langle q(z^{\tau}),z_t(\tau)\rangle d\tau
\\ \notag
&
-U\int_s^t\langle z_x(\tau),z_t(\tau)\rangle d\tau
\end{align}

From the above two estimates, \eqref{zmult} and \eqref{energyfordelay}, making use of Young's inequality and Sobolev inequalities, we have for $0 \le s < t \le T$, some $\eta>0$, and all $\e >0$:
\begin{align*}
\Ez(t)+(k+1)\int_s^t ||z_t||^2 d\tau \le& ~\Ez(s) +C_{\epsilon,T}\sup_{\tau \in [s,t]}||z||^2_{2-\eta} +\epsilon\int_s^t\big(||z||_2^2+||z_t||^2\big)d\tau\\\nonumber &-\int_s^t \langle f(u^1)-f(u^2),z_t\rangle d\tau+\Big|\int_s^t\langle q(z^{\tau}),z_t(\tau)\rangle d\tau\Big|
\end{align*}
For all $k\ge 0$, \eqref{zmult} taken with the above
 implies that
\begin{align}\label{zenergyprelim-2}
\frac12 \Ez(t)+c_0\int_s^t E_z d\tau \le& ~\Ez(s) +C(T,R)\sup_{\tau \in [s,t]}||z||^2_{2-\eta}
+C\int_{s-t^*}^t ||z(\tau)||_{2-\s}^2 d\tau
\\\nonumber &-\int_s^t \langle f(u^1)-f(u^2),z_t\rangle d\tau+\Big|\int_s^t\langle \partial_t[q(z^{\tau})],z(\tau)\rangle d\tau\Big|
\\ \notag
&+ \big|\langle q(z^t),z(t)\rangle \big|+\big|\langle q(z^s),z(s)\rangle\big|.
\end{align}
Above, we integrated by parts in the integral with the delayed
term. Therefore there exist $a_i>0$ and $C(T,R)>0$ such that
\begin{align}\label{zenergyprelim-3}
\Ez(t)+\int_s^t E_z d\tau \le& ~ a_0\left(\Ez(s)+\int_{s-t^*}^s ||z(\tau)||_{2-\s}^2 d\tau\right)
  +C(T,R)\sup_{\tau \in [s,t]}||z||^2_{2-\eta_*}
\\\nonumber &-a_1\int_s^t \langle f(u^1)-f(u^2),z_t\rangle d\tau.
\end{align}
Taking $t=T$ and integrating over $s$ in $[0,T]$ we arrive at (possibly rescaling
$a_i$ and $C(T,R)$):
\begin{align*}
T\Ez(T)+ &\int_0^T \int_s^T E_z d\tau ds \le ~ a_0\left(\Ez(0)+\int_{-t^*}^0 ||z(\tau)||_{2-\s}^2 d\tau\right)
  +C_{T,R}\sup_{\tau \in [0,T]}||z||^2_{2-\eta_*}
\\\nonumber &-a_1\int_0^Tds \int_s^T \langle f(u^1)-f(u^2),z_t\rangle d\tau  -a_2\int_0^T \langle f(u^1)-f(u^2),z_t\rangle d\tau.
\end{align*}
Since
\[
\int_0^Tds \int_s^T E_z d\tau \ge \frac{T}2 \int_{T-t^*}^T E_z d\tau ~~~\mbox{for}~T\ge 2t^*,
\]
we arrive to the following assertion:

\begin{lemma}[Delayed Observability]\label{le:observbl}
Let $$u^i \in C(0,T;H_0^2(\Omega))\cap C^1(0,T;L^2(\Omega)) \cap L^2(-t^*,T;H_0^2(\Omega))$$ solve (\ref{plate}) with appropriate initial conditions on $[0,T]$ for $i=1,2$, $T\ge 2t^*$. Additionally, assume $(u^i(t),u^i_t(t)) \in \mathscr B_R(Y_{pl})$ for all $t\in [0,T]$.
Then the following observability estimate on $z$ holds:
\begin{align}\label{enest1}
 \frac{T}2 \Big[\Ez(T)+ \int_{T-t^*}^T \Ez(\tau) d\tau\Big] \le& ~ a_0\left(\Ez(0)+\int_{-t^*}^0 ||z(\tau)||_{2}^2 d\tau\right) \\\nonumber
 & +C(T,R)\sup_{\tau \in [0,T]}||z||^2_{2-\eta_*} \\\nonumber
  &-a_1\int_0^T \int_s^T \langle f(u^1)-f(u^2),z_t\rangle ~d\tau ds \\\nonumber & -a_2\int_0^T \langle f(u^1)-f(u^2),z_t\rangle d\tau,
\end{align}
 with the $a_i$ independent of $T$ and $R$.
\end{lemma}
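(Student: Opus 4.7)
The plan is to combine the energy identity \eqref{energyfordelay} for the difference $z=u^1-u^2$ with the multiplier identity \eqref{zmult} to obtain a pointwise-in-$T$ bound on $E_z$, and then integrate the resulting inequality in the lower limit to extract observability of $E_z$ over the delay window $[T-t^*,T]$. Concretely, I would first write the energy relation between arbitrary $0\le s<t\le T$, insert \eqref{zmult}, and absorb the easy rotational-type term $U\int_s^t \langle z_x,z_t\rangle d\tau$ via Young's inequality (where the $z_x$ factor is interpolated into the $||\cdot||_{2-\eta_*}$ compact seminorm). This reproduces the pre-displayed inequality \eqref{zenergyprelim-2}.

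The main obstacle, and the only place where the structure of the delay really bites, is the term $\int_s^t \langle q(z^{\tau}),z_t(\tau)\rangle d\tau$. A direct $L^2$ bound via \eqref{qnegest2} would only produce $\int_{s-t^*}^t ||\Delta z||^2 d\tau$, which is of the same order as the principal energy on the left-hand side and therefore unusable for observability. The crucial move is to integrate by parts in time so the time derivative falls on $q$, and then to invoke the hidden compactness estimate \eqref{qnegest4} for $\partial_t[q(z^{\tau})]\in H^{-1}(\Omega)$. This produces, on the one hand, boundary terms $\langle q(z^t),z(t)\rangle$ and $\langle q(z^s),z(s)\rangle$ (easily controlled by $E_z$ and a lower-order tail in $z$ using \eqref{qnegest}), and on the other a time integral bounded by the compact term $C(T,R)\sup_{[s,t]}||z||^2_{2-\eta_*}$ plus a lower-order tail of the form $\int_{s-t^*}^t ||z(\tau)||^2_{2-\sigma}d\tau$. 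This yields the inequality \eqref{zenergyprelim-3}, with the nonlinear contribution $\int_s^t \langle \mathcal F(z),z_t\rangle d\tau$ carried along unchanged since Theorem \ref{nonest} is not yet used here (it is reserved for the later quasi-stability argument).

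Finally, I would fix $t=T$ in \eqref{zenergyprelim-3} and integrate in $s$ over $[0,T]$. The left-hand term $T\cdot E_z(T)$ appears trivially, while the double integral of $E_z$ is handled by the elementary estimate
$$
\int_0^T\!\!\int_s^T E_z(\tau)\,d\tau\,ds = \int_0^T \tau\, E_z(\tau)\,d\tau \ge \frac{T}{2}\int_{T-t^*}^T E_z(\tau)\,d\tau,\qquad T\ge 2t^*,
$$
since for $\tau\in[T-t^*,T]$ one has $\tau\ge T/2$. The right-hand side of \eqref{zenergyprelim-3} contributes $a_0(E_z(0)+\int_{-t^*}^0 \|z(\tau)\|_2^2 d\tau)$ (after absorbing the $\|\cdot\|_{2-\sigma}$ tail into $\|\cdot\|_2$), the compact term $C(T,R)\sup_{[0,T]}\|z\|^2_{2-\eta_*}$, the $ds$-integrated nonlinear term $a_1\int_0^T\!\int_s^T\langle \mathcal F(z),z_t\rangle d\tau ds$, and the residual nonlinear term $a_2\int_0^T\langle \mathcal F(z),z_t\rangle d\tau$. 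Rescaling and collecting gives precisely \eqref{enest1}, with $a_0,a_1,a_2$ independent of $T$ and $R$ because they came out of the pre-integration inequality \eqref{zenergyprelim-3}; only the compactness constant depends on $T,R$. I expect no subtleties beyond the integration-by-parts trick on the delay term, which is the whole reason the $\alpha=0$ theory requires the refined estimate \eqref{qnegest4} rather than the naive $L^2$ bound.
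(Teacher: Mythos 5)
Your proposal follows the paper's own derivation essentially step for step: the energy identity \eqref{energyfordelay} combined with the multiplier estimate \eqref{zmult}, integration by parts in time on the delay term together with the hidden-compactness bound \eqref{qnegest4} (plus \eqref{qnegest} for the boundary terms) to reach \eqref{zenergyprelim-3}, and then fixing $t=T$, integrating in $s$, and using $\int_0^T\int_s^T E_z\,d\tau\,ds\ge \frac{T}{2}\int_{T-t^*}^T E_z\,d\tau$ for $T\ge 2t^*$. The one point you gloss over (as does the paper, behind ``possibly rescaling $a_i$'') is that integrating \eqref{zenergyprelim-3} in $s$ leaves $a_0\int_0^T \Ez(s)\,ds$ on the right, which is removed by one further application of \eqref{zenergyprelim-3} with $s=0$, $t=T$---this is precisely the origin of the residual term $-a_2\int_0^T\langle \cF(z),z_t\rangle\,d\tau$ that you correctly list, with $a_2$ still independent of $T$ and $R$.
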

	
	\subsection{Ultimate Dissipativity: Construction of an Absorbing Ball}\label{ball}
	In order to make use of Theorem \ref{dissmooth} (or any other abstract theorems presented in Section \ref{quasisec}), we must show that the non-gradient dynamical system $(T_t,\bH)$ is ultimately dissipative. To show this, we consider the delayed, Lyapunov-type function (with $E_{pl}$ as in  (\ref{plateenergy}) and with $\Pi_*(u)$ given by \eqref{posen}):
\begin{align}
V(T_t(x)) \equiv &~E_{pl}(u(t),u_t(t))-\langle q(u^t),u(t)\rangle+\nu\Big(\langle u_t,u\rangle +\frac{(1+k)}{2}||u||^2\Big)\\\nonumber
&+\mu\Big(\int_{t-t^*}^t\Pi_*(u(s))ds+\int_{0}^{t^*}\int_{t-s}^t \Pi_*(u(\tau))~d\tau ds\Big),
\end{align}
where $T_t(x) \equiv x(t)= (u(t),u_t(t),u^t)$ for $t \ge 0$\footnote{without loss of generality, take $t_0=0$} and $\mu, \nu$ are some positive numbers to be specified below. Recall
 the notation as in \eqref{posen}:
$$
E_* \equiv \dfrac{1}{2}[||\Delta u||^2+||u_t||^2]+\Pi_*(u).$$
From Lemma \ref{energybound} and the inequality
$$\int_0^{t^*}\int_{t-s}^t\Pi_*(u(\tau)) d\tau ds \le t^*\int_{t-t^*}^t \Pi_*(u(\tau)) d\tau,$$ we have that there exists a $\nu_0>0$ such that for all $0< \nu \le \nu_0$ there are $c_0(\nu),c_1,c(\nu),C >0$
\begin{equation}\label{energybounds}
c_0E_* - c \le V(T_t(x)) \le c_1E_*+\mu C t^*\int_{-t^*}^0 \Pi_*(u(t+\tau))d\tau +c.
\end{equation}

A careful but direct calculation of $\dfrac{d}{dt}V(T_t(x))$, coupled with the estimates on the nonlinear potential energy Lemma \ref{l:epsilon} and the estimate on $q(u^t)$ at the $L^2$ level in Lemma \ref{pr:q}, produces, 
for $0<\nu<\min{\{\nu_0,1\}}$, and for $\mu$ sufficiently small, the following lemma:
\begin{lemma}\label{le:48}
For all $k \ge 0$, there exist $\mu, \nu >0$ sufficiently small, and $c(\mu,\nu,t^*,k,b_2),$ $C(\mu,\nu,p_0,b_1,b_2)>0$ such that
\begin{align}\label{goodneg}
\dfrac{d}{dt}V(
T_t(x))\le& ~C-c\big\{||u_t||^2+||\Delta u||^2+||\Delta v(u)||^2 \\\nonumber &+\Pi_*(u(t-t^*))+\int_{-t^*}^0\Pi_*(u(t+\tau))d\tau\big\}.
\end{align}
\end{lemma}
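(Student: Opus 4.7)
The plan is to differentiate $V(T_t(x))$ piece by piece, substitute the plate equation in the multiplier term, and use the careful cancellation built into the definition of $V$ to eliminate the delicate interactive terms before applying the estimates from Lemmas \ref{pr:q} and \ref{l:epsilon}.

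First, by the differential form of the energy identity \eqref{energyrelation},
\[
\frac{d}{dt}E_{pl}(u,u_t) = -(k+1)\|u_t\|^2 + \langle q(u^t), u_t\rangle - U\langle u_x, u_t\rangle.
\]
The pairing $\langle q(u^t), u_t\rangle$ is the primary obstruction: because $q(u^t)$ is only $L^2$ when $u$ is $H^2$, we cannot directly dominate it by the damping. The crucial design of $V$ is the subtraction of $-\langle q(u^t), u\rangle$; its derivative is $-\langle \partial_t[q(u^t)], u\rangle - \langle q(u^t), u_t\rangle$, so the two copies of $\langle q(u^t), u_t\rangle$ cancel exactly. What survives is $-\langle \partial_t[q(u^t)], u\rangle$, and this is now safe thanks to the one-derivative gain \eqref{qnegest4}, which places $\partial_t[q(u^t)]$ in $H^{-1}(\Omega)$ and hence pairs with $u\in H_0^1(\Omega)$.

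Next, differentiate the multiplier block $\nu(\langle u_t,u\rangle + \tfrac{1+k}{2}\|u\|^2)$, using the equation in \eqref{plate} to substitute for $u_{tt}$. The two contributions proportional to $\nu(1+k)\langle u_t,u\rangle$ cancel, yielding
\[
\nu\|u_t\|^2 - \nu\|\Delta u\|^2 - \nu\langle f_B(u),u\rangle + \nu\langle p_0 - U u_x + q(u^t), u\rangle.
\]
For the Berger nonlinearity, $-\nu\langle f_B(u),u\rangle = \nu b_1\|\nabla u\|^2 - 4\nu\Pi_*(u)$, producing the decisive negative quartic term $-4\nu\Pi_*(u(t))$. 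Finally, differentiating the $\mu$ delay integrals is immediate and produces the bonus
\[
\mu(1+t^*)\Pi_*(u(t)) - \mu\Pi_*(u(t-t^*)) - \mu\!\int_{-t^*}^0\Pi_*(u(t+\tau))\,d\tau,
\]
whose last two contributions are exactly the delayed negative terms on the right-hand side of \eqref{goodneg}.

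Assembling these, one has to absorb the remaining indefinite terms: $U\langle u_x,u_t\rangle$, $\nu U\langle u_x, u\rangle$, $\nu b_1\|\nabla u\|^2$, $\nu\langle p_0,u\rangle$, $\nu\langle q(u^t),u\rangle$, and $\langle \partial_t[q(u^t)],u\rangle$. Each is handled by Young's inequality, estimate \eqref{qnegest} (to control $\nu\langle q(u^t),u\rangle$), and Lemma \ref{l:epsilon} (to bound $\|\nabla u\|^2 \le \|u\|_{2-\eta}^2 \le \epsilon[\|\Delta u\|^2+\Pi_*(u)] + M_\epsilon$). The closing step is a two-parameter selection: first fix $\nu<\min\{\nu_0,1\}$ small enough that $-\nu\|\Delta u\|^2 - 4\nu\Pi_*(u(t))$ absorbs all the interpolation residuals and swallows the sign-indefinite Berger contribution $\nu b_1\|\nabla u\|^2$; then pick $\mu$ strictly smaller, e.g.\ $\mu(1+t^*)<2\nu b_2$, so the positive $\mu(1+t^*)\Pi_*(u(t))$ is killed by $-4\nu\Pi_*(u(t))$. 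The additive constant $C$ then collects $\|p_0\|^2$ and the $M_\epsilon$'s from Lemma \ref{l:epsilon}.

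The main obstacle is the control of $|\langle \partial_t[q(u^t)], u(t)\rangle|$. By \eqref{qnegest4}, this is bounded by $C\|u(t)\|_1 \cdot \big(\|u(t)\|_1 + \|u(t-t^*)\|_1 + \int_{-t^*}^0\|\Delta u(t+\tau)\|\,d\tau\big)$. The first two terms yield $\|u(t)\|_1^2$ and $\|u(t-t^*)\|_1^2$, which Lemma \ref{l:epsilon} converts to $\epsilon[\|\Delta u(t)\|^2+\Pi_*(u(t))]$ and $\epsilon[\|\Delta u(t-t^*)\|^2+\Pi_*(u(t-t^*))] + \text{const}$, absorbable by the dominant negative pieces at time $t$ and by the $-\mu\Pi_*(u(t-t^*))$ term (the delayed $\|\Delta u(t-t^*)\|^2$ residue is swept into the generic constant thanks to the a priori bound from Corollary \ref{globalbound}). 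The integral term uses Cauchy--Schwarz to give $t^*\int_{-t^*}^0\|\Delta u(t+\tau)\|^2 d\tau$, and must be absorbed — with $\epsilon$ small — into the delayed $-\mu\int_{-t^*}^0\Pi_*(u(t+\tau))d\tau$ via the superquadratic character of $\Pi_*$. Ensuring that all the small constants line up so that the final coefficient in front of the bracket in \eqref{goodneg} is strictly positive is the main bookkeeping task; once this is arranged, the estimate \eqref{goodneg} follows.
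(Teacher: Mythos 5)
Your overall skeleton (differentiate $V$ term by term, use the built-in cancellation of $\langle q(u^t),u_t\rangle$ against the derivative of $-\langle q(u^t),u\rangle$, extract $-\nu\|\Delta u\|^2-4\nu\Pi_*(u)$ from the multiplier, read off the delayed negatives from the $\mu$-integrals, then choose $\nu$ first and $\mu$ smaller) is exactly the computation the paper has in mind, and most of your algebra (e.g.\ $-\langle f_B(u),u\rangle=b_1\|\nabla u\|^2-4\Pi_*(u)$, the cancellation of the $(1+k)\langle u_t,u\rangle$ terms) is correct. The proof fails, however, at the step you yourself flag as the main bookkeeping task: the treatment of $\langle\partial_t[q(u^t)],u(t)\rangle$. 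Using \eqref{qnegest4} as a black box and Cauchy--Schwarz you produce $\epsilon\, t^*\int_{-t^*}^0\|\Delta u(t+\tau)\|^2\,d\tau$, and you claim this is absorbed by $-\mu\int_{-t^*}^0\Pi_*(u(t+\tau))\,d\tau$ ``via the superquadratic character of $\Pi_*$.'' That absorption is impossible: $\Pi_*(w)=\tfrac{b_2}{4}\|\nabla w\|^4$ is superquadratic only in the $H^1$-seminorm and gives no control whatsoever of $\|\Delta w\|^2$; the obstruction is topological, not a matter of choosing $\epsilon$ small. The repair is to not discard the fact that the multiplier here is $u(t)\in H_0^2(\Omega)$: in the pairing $\langle\partial_t[q(u^t)],u(t)\rangle$ one can integrate by parts a second time, moving two derivatives onto $u(t)$, so that the delayed factors appear only at the $H^1$-level, e.g.\ a bound of the form $C\|u(t)\|_2\big(\|u(t)\|_1+\|u(t-t^*)\|_1+\int_{-t^*}^0\|u(t+\tau)\|_1\,d\tau\big)$; then Young's inequality charges $\epsilon\|\Delta u(t)\|^2$ at the current time (absorbed by $-\nu\|\Delta u(t)\|^2$) and leaves only delayed quantities quadratic in $\|\nabla u\|$, which the quartic delayed negatives do absorb at the cost of a structural constant.

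A second, smaller but genuine, flaw is your appeal to Corollary \ref{globalbound} to ``sweep the delayed $\|\Delta u(t-t^*)\|^2$ residue into the generic constant.'' That bound depends on the initial data ($C(\|y_0\|_Y)$) and holds only for the subsonic full flow--plate system, whereas the constant $C$ in Lemma \ref{le:48} must depend only on $\mu,\nu,p_0,b_1,b_2,t^*,k$ (otherwise the sublevel set of $V$ is not a uniform absorbing set, which is the entire point of the lemma), and the delay system is being treated for all $U\neq1$ precisely without that energy identity. The detour is also unnecessary: for the term $\|u(t-t^*)\|_1^2$ you do not need Lemma \ref{l:epsilon} (which drags in the $H^2$-level quantity); plain Young against the quartic, $\|\nabla u(t-t^*)\|^2\le\delta\|\nabla u(t-t^*)\|^4+C_\delta$, lets $-\mu\Pi_*(u(t-t^*))$ absorb it with a data-independent constant, and no $\|\Delta u(t-t^*)\|^2$ ever appears. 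With these two corrections the parameter selection you outline does close the estimate.
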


From this lemma and the upper bound in \eqref{energybounds}, we have for some $\delta>0$  (again, depending on $\mu$ and $\nu$) and a $C$ (independent of the damping coefficient $k$): \begin{equation}\label{gronish}
\dfrac{d}{dt}V(T_t(x)) +\delta V(T_t(x)) \le C,~~t>0.
\end{equation}
The estimate above in (\ref{gronish}) implies (via an integrating factor) that
\begin{equation}\label{balll}
V(T_t(x)) \le V(x)e^{-\delta t}+\dfrac{C}{\delta}(1-e^{-\delta t}).
\end{equation}
Hence, the set
$$
\mathscr{B} \equiv \left\{x \in \bH:~V(x) \le 1+\dfrac{C}{\delta} \right\},
$$  is a bounded forward invariant absorbing set. This gives that $(T_t,\bH)$ is ultimately dissipative.

	\subsection{Quasi-stability on the Absorbing Ball}
	We adopt the approach here of showing the quasi-stability estimate \eqref{specquasi*} on the absorbing ball constructed in the previous section. 
	\begin{remark}We note that for other non-dissipative flow-plate systems (for instance involving the von Karman nonlinearity), the approach may differ; indeed, it is not always possible to show quasi-stability on the absorbing ball \cite{springer,HLW}---this is a rather strong property. 
	\end{remark}
Here, quasi-stability will follow directly from the observability inequality \eqref{enest1}, the nonlinear decomposition Theorem \eqref{nonest}, and the absorbing bound \eqref{balll}. In fact, the proof below demonstrates the quasi-stability estimate on any bounded, forward invariant set. 

Consider the decomposition in Theorem \eqref{nonest}:
\begin{align*}
\langle \mathcal F(z),z_t\rangle =&~ \dfrac{1}{2}\dfrac{d}{dt}\Big[-b_1||\nabla z||^2+b_2||\nabla u^1||^2||\nabla z||^2 \Big]+{b_2}||\nabla z||^2 \langle \Delta u^1,u^1_t\rangle\\
& -b_2 [||\nabla u^1||^2-||\nabla u^2||^2]\langle\Delta u^2,z_t \rangle.
\end{align*}

\noindent At this point, restricting to any bounded, forward-invariant set $B_R$ (radius denoted by $R$)\begin{equation*}
||u^1(t)||_{2}+||u^1_{t}(t)||_{0}+||u^2(t)||_{2}+||u^2_{t}(t)||_{0}\leq
C(R),~~t>0,
\end{equation*}
and taking into the Lipschitz-type bound \eqref{difff}, it follows immediately that, for  $0<\eta<1/2$:
\begin{align}\label{mmmbop}
\Big|\int_s^t \big\langle \cF(z),z_t\big\rangle_{\Omega} d\tau\Big| \le &~C(R,\epsilon)\sup_{\tau \in [s,t]} ||z||^2_{2-\eta}+ \epsilon\int_s^tE_{z}(t)d\tau,~~\forall~~\epsilon>0,
\end{align} provided
 $u^i(\tau) \in \mathscr{B}_R(H^2_0(\Omega))$ for all $\tau\in [s,t].$ In particular, this bound holds on the invariant, absorbing ball $\mathscr B$ from Section \ref{ball}.

Considering \eqref{enest1}, and taking $T$ sufficiently large, we have
from the observability inequality that:
\begin{equation*}
E_{z}(T)+\int_{T-t^*}^{T}||z(\tau)||_2^2 d\tau\leq \alpha \Big(E_z(0)+\int_{-t^*}^0||z(\tau)||_2^2d\tau\Big)+C\underset{\tau \in \lbrack0,T]}{\sup }||z(\tau )||_{2-\eta }^2
\end{equation*}
with $\alpha<1$ and $C=C(\mathscr{B},T,k,t^*)$.
By the standard iteration argument via the semigroup property, we conclude that  
\begin{equation}\label{qs*}
||\mathbf z(t)||_{\bH}^2 \le C(\sigma,\mathscr B)e^{-\sigma t}||\mathbf z(0)||_{\bH}^2+C(\mathscr B,t^*,k) \sup_{\tau \in [0,t]} ||z(\tau)||_{2-\eta}^2,
\end{equation}
for $\mathbf z(t)=(z(t),z_t(t),z^t)$, and thus $(T_t,\bH)$ is quasi-stable on $\mathscr B$.

\section{Proofs of Main Theorems}\label{proofs}

	\subsection{Proof of Theorem \ref{maintheorem}: Global Compact Attractor}
	On the strength of Theorem \ref{doy}, applied
with $B =\mathscr B$
and $
\bH = H_0^2(\Omega) \times
L^2(\Omega) \times L^2(-t^*,0;H_0^2(\Omega)),
$ we
deduce the existence of a compact global attractor from the quasi-stability property of $(T_t,\bH)$ given by \eqref{qs*} and \eqref{l:lip}. In addition, Theorem \ref{dimsmooth} guarantees $\mathbf{A}$ has finite fractal dimension and that $$||u_{tt}(t)||^2+||u_t(t)||_2^2 \le C~\text{ for all } t \in \R.$$
Since $u_{t}\in H^{2}(\Omega )\subset C(\overline \Omega)$, standard elliptic regularity with clamped boundary conditions for
$$
\Delta ^{2}u=p_0-u_{tt}-(1+k) u_t-f(u)-Uu_x+q(u^t) \in L^2(\Omega)
$$
 gives that ~$\ ||u(t)||_{4}^{2}\leq C ~~~\text{for all}~ ~
t\in \reals.$
Thus, we can conclude additional regularity of the trajectories from the attractor $\bA \subset \bH$
stated in Theorem~\ref{maintheorem}.
We have now completed the proof of Theorem \ref{maintheorem}.
	
Corollary \ref{th:main2*} follows immediately by considering the dynamical system for the full flow-plate system $(S_t,Y)$ that generates the reduced dynamical system  $(T_t, \bH )$ (possible for sufficiently large times by Theorem \ref{rewrite}). For $\mathbf A \subset \bH$, we then take $\mathscr U$ to be the projection of $\mathbf{A}$ on $Y_{pl}$, concluding the proof.

	\subsection{Proof of Theorem \ref{th:main2}: Generalized Fractal Exponential Attractor}
	With the quasi-stability estimate established on the absorbing ball, we need only establish the H\"{o}lder continuity in time of $T_t$ in {\em some} weaker space $\widetilde \bH$ to finish the proof of Theorem \ref{th:main2}. This is accomplished through lifting via the operator $\mathcal A^{-1/2}$ for $\mathcal A$ the clamped, biharmonic operator with domain $\mathcal D(\mathcal A) = (H^4\cap H_0^2)(\Omega)$ and $\mathcal Au=(-\Delta)^2u$. Via the standard construction, we have that for $u \in L^2(\Omega)$, we obtain $\mathcal A^{-1/2}u \in H_0^2(\Omega) = \mathcal D(\mathcal A^{1/2})$ \cite{springer,LT}.

From the previous section, we note that we can restrict our attention to the absorbing ball (for $t>t(x(0))$): $||x(t)||_{\bH} \le C(\mathscr B)$. In particular, for any $x(t)=(u(t),u_t(t),u^t)$, $t$ sufficiently large,  we have global-in-time bounds: 
\begin{align}
||\Delta u(t)|| \le C(\mathscr B),~ &~~||u_t(t)|| \le C(\mathscr B),~~~||u^t||_{L^2(-t^*,0;H_0^2(\Omega))} \le C(\mathscr B,t^*).
\end{align}
The latter follows from the dissipativity estimate in \eqref{energybounds} and the global-in-time bound of $V(T_t(x))$:
$$E_*(t) \le \frac{1}{c_0}\Big[V(T_t(x))+c\Big] \le C(\mathscr B).$$
And thus we have from the equation \eqref{reducedplate} and linearity of $q(\cdot)$ \begin{align} \mathcal A^{-1/2} u_{tt}=&~\mathcal A^{1/2}u+q(\mathcal A^{-1/2} u^t) +\mathcal A^{-1/2}\Big[p_0-U u_x -(k+1)  u_t - f_B(u)\Big].
 \end{align}
 From this it follows that
 \begin{align*} ||\mathcal A^{-1/2}u_{tt}||_{L^2(\Omega)} \le&~ C ||\Delta u||+t^*\int_{t-t^*}^t||\mathcal A^{-1/2}\Delta u||ds\\
 &+C||p_0||_{H^{-2}(\Omega)}+C(U)||u||_{H^{-1}(\Omega)}+C(k)||u_t||_{H^{-1}(\Omega)} \\
 \le &~C(t^*,U)||u||_{H^2(\Omega)}+C(k)||u_t||_{L^2(\Omega)}+C||p_0||_{L^2(\Omega)}\\
 \le & ~ C(t^*,U,p_0,\mathscr B).
 \end{align*}
From here, we note
$u_t(t)-u_t(s) =\int_s^tu_{tt}(\tau)d\tau,$ and thus
\begin{align} \nonumber
||u_t(t)-u_t(s)||_{H^{-2}(\Omega)} \le&~ C||\mathcal A^{-1/2} [u_t(t)-u_t(s)]||_{L^2(\Omega)}\\  \le & ~ \int_s^t ||\mathcal A^{-1/2}u_{tt}(\tau)||d\tau  \le C(t^*,U,k,\mathscr B)|t-s|.
\end{align}
Lastly, we note
\begin{align*}
||u(t)-u(s)|| \le ~ \int_s^t ||u_t(\tau)|| d\tau&  \le \Big(\sup_{t}||u_t||\Big)|t-s|\le C(\mathscr B)|t-s|, \\
\int_{-t^*}^0||u(t+\tau)-u(s+\tau)||d\tau \le&~ \int_{-t^*}^0 \int_{s+\tau}^{t+\tau}||u_t(\sigma)||d\sigma d\tau \le C(t^*,\mathscr B)|t-s|.
\end{align*}
From the above estimates, we see that 
$$||T_t(x)-T_s(x)||_{\widetilde{\bH}} \le \mathcal C|t-s|,~~\widetilde{\bH}=L^2(\Omega) \times H^{-2}(\Omega) \times L^2(-t^*,0;L^2(\Omega)).$$

Thus we note that $(T_t,\bH)$ is uniformly H\"{o}lder continuous (in fact, Lipschitz) on the absorbing ball $\mathscr B$ in the topology  $\widetilde{\bH} =\widetilde{Y_{pl}}\times L^2(-t^*,0;L^2(\Omega))$.
The proof of Theorem \ref{th:main2} is concluded on the strength of Theorem \ref{expattract*}.

\subsection{Proof of Theorem \ref{defecttheorem}: Construction of Determining Functionals}\label{construct}

In this proof we adapt \cite[Section 7.9.4]{springer} to show that having quasi-stability estimate for the dynamics $(T_t,\bH)$ on $\mathscr B$ is sufficient to produce a finite set of determining functionals (of sufficiently small completeness defect). 

For this proof, let $\mathscr L=\{l_i\}_{i=1}^N$ be a finite set of functionals on $H_0^2(\Omega)$. Recall the notion of {\em completeness defect} for $\mathscr L$ \eqref{defect} measured between $H_0^2(\Omega)$ and any $H^s(\Omega)$ ($0 \le s <2$):
	\begin{equation}
	\varepsilon_{\mathscr L}(H_0^2(\Omega),H^s(\Omega))=\varepsilon_{\mathscr L,s} \equiv  \sup_{\{|| \Delta w|| \le 1\}}\big\{||w||_{H^s(\Omega)}~:~ l_j(w)=0~~~\forall~j=1,...,N\big\}.\end{equation} 
	Secondly, we note the relation between $\varepsilon_{\mathscr L,2-\eta}$ and $\varepsilon_{\mathscr L,0}$ through Sobolev interpolation \cite[p.123]{quasi}:
	\begin{equation}\label{control} ||u||_{2-\eta} \le ||u||^{\eta/2}||u||_{2}^{1-\eta/2}~~\implies~~[\varepsilon_{\mathscr L,2-\eta}]^{2/\eta}\le  C(\eta)\varepsilon_{\mathscr L,0}.\end{equation}
	
	Now, let us prove a critical lemma.
	\begin{lemma}\label{doya} Let $\mathscr L$ and $\varepsilon_{\mathscr L,2-\eta}$ for $\eta>0$ as above. Then, for $v \in H^2_0(\Omega)$
	\begin{equation}\label{lemmause}||v||_{2-\eta} \le \varepsilon_{\mathscr L,2-\eta}||v||_{H^2_0(\Omega)}+C({\mathscr L},\eta)\max_{j = 1,...,N} |l_j(v)|.\end{equation}
	\end{lemma}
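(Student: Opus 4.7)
The plan is to exploit the definition of the completeness defect on the common kernel of $\mathscr L$, and to use finite-dimensionality of the orthogonal complement to control the ``visible'' part of $v$ by the functionals.

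Concretely, I would let $N = \bigcap_{j=1}^{N} \ker(l_j) \subset H_0^2(\Omega)$ and write $H_0^2(\Omega) = N \oplus N^\perp$ (orthogonal with respect to the $H_0^2$ inner product $(u,w) \mapsto \langle \Delta u, \Delta w \rangle$). Since the map $v \mapsto (l_1(v),\ldots,l_N(v))$ from $H_0^2(\Omega)$ to $\mathbb R^N$ has kernel equal to $N$, the quotient $H_0^2(\Omega)/N \cong N^\perp$ is finite-dimensional. Denote by $P$ the orthogonal projection onto $N^\perp$, and decompose $v = (I-P)v + Pv$.

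The first piece $(I-P)v$ lies in $N$, so $l_j((I-P)v)=0$ for every $j$; the definition \eqref{defect} of the completeness defect (in the $H^{2-\eta}$ version $\varepsilon_{\mathscr L,2-\eta}$) applied to $(I-P)v/\|\Delta (I-P)v\|$ immediately yields
\begin{equation*}
\|(I-P)v\|_{2-\eta} \le \varepsilon_{\mathscr L,2-\eta}\,\|\Delta (I-P)v\| \le \varepsilon_{\mathscr L,2-\eta}\,\|v\|_{H_0^2(\Omega)},
\end{equation*}
where the last step uses that $P$ is an orthogonal projection in $H_0^2$. For the second piece $Pv$ I would use that on the finite-dimensional space $N^\perp$, all norms are equivalent; in particular $\|\cdot\|_{H^{2-\eta}}$ is dominated by $\|\cdot\|_{H_0^2}$, and the map $w \mapsto \max_j |l_j(w)|$ is a norm on $N^\perp$ (it separates points by the very definition of $N$). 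Hence there exists a constant $C(\mathscr L,\eta)$ depending only on the finite-dimensional geometry of the $l_j$ restricted to $N^\perp$ such that
\begin{equation*}
\|Pv\|_{2-\eta} \le C(\mathscr L,\eta)\,\max_{j=1,\ldots,N} |l_j(Pv)| = C(\mathscr L,\eta)\,\max_{j=1,\ldots,N} |l_j(v)|,
\end{equation*}
where the equality uses $l_j((I-P)v)=0$.

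Adding the two estimates gives \eqref{lemmause}. The proof is essentially a book-keeping exercise once the decomposition is in hand, so I do not expect any serious obstacle; the only subtle point is noticing that the constant multiplying $\max_j|l_j(v)|$ is \emph{not} universal---it depends on both the functionals and $\eta$, since the equivalence of norms on the finite-dimensional space $N^\perp$ carries a dimension- and geometry-dependent constant. This is consistent with the statement of the lemma, and harmless for its intended use (via the interpolation inequality \eqref{control}) in the subsequent proof of Theorem \ref{defecttheorem}, where only $\varepsilon_{\mathscr L,2-\eta}$ needs to be made small.
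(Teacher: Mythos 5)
Your proof is correct, and it takes a mildly different route from the paper's. The paper chooses a biorthogonal system $\{e_j\}\subset H_0^2(\Omega)$ with $l_j(e_i)=\delta_{ij}$, sets $w=v-\sum_{j}l_j(v)e_j$ (so $l_j(w)=0$ for all $j$), applies the definition of $\varepsilon_{\mathscr L,2-\eta}$ to $w$, and bounds the correction $\sum_j l_j(v)e_j$ explicitly by $\bigl(\sum_j\|e_j\|_{2-\eta}\bigr)\max_j|l_j(v)|$; you instead split $v$ via the orthogonal projection (in the $\langle\Delta\cdot,\Delta\cdot\rangle$ inner product) onto the complement of the common kernel $N=\bigcap_j\ker l_j$, and control the finite-dimensional piece by equivalence of norms on $N^\perp$, using that $w\mapsto\max_j|l_j(w)|$ separates points there. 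Both arguments rest on the same idea---decompose $v$ into a part annihilated by $\mathscr L$, controlled by the completeness defect, plus a finite-dimensional remainder controlled by $\max_j|l_j(v)|$---but the mechanisms for the remainder differ. Your version avoids constructing the biorthogonal system (so it does not implicitly require the $l_j$ to be linearly independent), and orthogonality gives $\|\Delta(I-P)v\|\le\|\Delta v\|$ at once, whereas the paper's substitution needs one extra triangle inequality to pass from $\|w\|_{H_0^2(\Omega)}$ to $\|v\|_{H_0^2(\Omega)}$, with the difference absorbed into the constant; the paper's version, in exchange, yields a concrete constant tied to the interpolation-type operator $R_{\mathscr L}$ rather than an abstract norm-equivalence constant. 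In either case $C(\mathscr L,\eta)$ depends on the functionals, which, as you observe, is harmless for the use in Theorem \ref{defecttheorem} via \eqref{control}.
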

	\begin{proof}[Proof of Lemma \ref{doya}]
	Let $\{ e_j~:~j=1,...,N\}$ be an orthonormal system for $\mathscr L$---so $l_j(e_i)=0,~i\neq j$ and $l_j(e_j)=1$. Now, for any $v \in H^2_0(\Omega)$, we can write
	$w \equiv v-\sum_{j=1}^N l_j(v)e_j$, and this $w$ has the property that $l_j(w)=0$ for $j=1,...,N$. By the definition of $\varepsilon_{\mathscr L,2-\eta}$, we have
	$$||w||_{2-\eta} \le \varepsilon_{\mathscr L,2-\eta}||w||_{H_0^2(\Omega)}.$$ Substituting $v=w+\sum_{j=1}^Nl_j(v)e_j$, we obtain \eqref{lemmause}.
	\end{proof}

	Let $T_t(x_1)$ and $T_t(x_2)$ be two trajectories for $x_1,x_2 \in \mathscr B\subseteq  \bH$ (and let us retain the notation that $T_t(x_1)-T_t(x_2) = \mathbf z(t) = (z(t),z_t(t),z^t)$).
We want to show, if $\varepsilon_{\mathscr L,2-\eta}$ is sufficiently small, then:  \begin{equation}\label{limlim}\lim_{t \to \infty} | l_j(T_t(x_1)-T_t(x_2)))|^2 = 0,~~\forall~~j=1,...,N,\end{equation}
implies that
 $$\lim_{t \to \infty} ||T_t(x_1)-T_t(x_2)||_{H_0^2(\Omega)}^2 =0.$$
 So, suppose \eqref{limlim}. Note that this convergence is equivalent to 
 \begin{equation}\label{needthis}\mathscr S(t) \equiv \sup_{s \in [t,t+\tau]} \max_j |l_j(u^1(s)-u^2(s))|^2=0,~~t\to \infty.\end{equation}
 From the quasi-stability estimate \eqref{qs*} and the semigroup property, we obtain
 \begin{equation}\label{usingthisone}
 ||T_{t+\tau}(x_1)-T_{t+\tau}(x_2)||_{\bH}^2 \le C(\sigma,\mathscr B)e^{-\sigma \tau}||T_t(x_1)-T_t(x_2)||_{\mathbf H}+C\sup_{t \le s \le t+\tau} ||z(t)||_{2-\eta}^2
 \end{equation}
 With Young's inequality, we have from \eqref{lemmause}
 $$||v||_{2-\eta}^2 \le (1+\delta)\varepsilon_{\mathscr L,2-\eta}^2||v||_{H_0^2(\Omega)}^2+C(\mathscr L,\delta,\eta)\max_{j=1,...,N} |l_j(v)|^2.$$
 With the Lipschitz estimate on $T_t$ in \eqref{dynsysest}, we obtain from above
 $$\sup_{t \le s \le t+\tau} ||z(s)||_{2-\eta}^2 \le [(1+\delta)\varepsilon_{\mathscr L,2-\eta}Ce^{a_R\tau}]||T_t(x_1)-T_t(x_2)||_{\bH}^2+C(\mathscr L,\delta,\eta)\mathscr S(t).$$
 From this estimate, we invoke \eqref{usingthisone} to obtain
 $$||T_{t+\tau}(x_1)-T_{t+\tau}(x_2)||_{\bH}^2 \le \beta ||T_t(x_1)-T_t(x_2)||_{\bH}^2 + C(\mathscr L,\delta,\eta)\mathscr S(t),$$
 with $\beta=\mathscr C(\sigma,\mathscr B)[(1+\delta)\varepsilon_{\mathscr L,2-\eta}e^{a_R\tau}+e^{-\sigma \tau}]$. For $\delta>0$, and $\tau >0$ sufficiently large, by taking $\varepsilon_{\mathscr L,2-\eta}<\varepsilon_*$ sufficiently small, we guarantee  $\beta<1$. Then, again from the semigroup property, we can iterate on intervals of size $\tau$ to obtain
 \begin{align*} ||T_{t_0+n\tau}(x_1)-T_{t_0+n\tau}(x_2)||_{\bH}^2 \le &~  \beta^n||T_{t_0}(x_1)-T_{t_0}(x_2)||_{\bH}^2 \\ &+C\sum_{m=0}^{n-1}\beta^{n-m-1}\mathscr S(t_0+m\tau).\end{align*}
 From here, taking $n\to \infty$, we obtain from \eqref{needthis} the desired conclusion in \eqref{limlim} and the proof of Theorem \ref{defecttheorem} is complete, noting that $\varepsilon_{\mathscr L,0}$ controls $\varepsilon_{\mathscr L,2-\eta}$,  as in \eqref{control}.

\section{Final Comments}\label{finale}
In this concluding section, we make a few remarks. The main item is: {\em what happens in the system when there is imposed damping}, $k>0$? We quote a variety of results (whose proofs are beyond the scope here) that apply to our main models of interest \eqref{flowplate} (the full flow-plate system) and \eqref{reducedplate} (the reduced plate). We then propose some open questions regarding the results and analysis here.

	\subsection{Known Results for Imposed Damping}
	In this section we allow the imposed damping $k>0$, which leads to dissipation in the full flow-plate system \eqref{flowplate}, and additional damping in the reduced, delayed plate system \eqref{reducedplate}. We make the distinction here between {\em some damping}---$k>0$---and {\em large damping}---$k>k_*$ for $k_*$ chosen based on intrinsic properties of the model. In the results below, we need {\em large damping}. 
	
	\begin{remark}\label{kmin} In this result, and all other results below, the minimal damping coefficient $k_{*}$ depends on the loading $p_0$ and $b_1,b_2$, as well as the domain $\Omega$, the flow support parameter $\rho_0$, and the unperturbed flow velocity $U$, but $k_*$ is independent of the particular initial data of the system. \end{remark}
	
	The first result concerns the improvement of the generalized fractal exponential attractor in Theorem \ref{th:main2}. Namely, there exists a $k_*$ such that for all $k>k_*$, a proper fractal exponential attractor exists (also called an {\em inertial set} \cite{springer,quasi}). This set is exponentially attracting and finite dimensional in the state space.
	\begin{theorem}[Fractal Exponential Attractor]\label{th:main3}
With the same hypotheses as Theorem \ref{maintheorem} and $k>k_*$ (depending on the intrinsic parameters in \eqref{flowplate}) the evolution $(T_t,\mathbf H)$ has a fractal exponential attractor ${\mathbf A}_{\text{exp}} $ of finite dimension in the space ${\mathbf H}$.
\end{theorem}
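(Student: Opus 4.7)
\subsection*{Proof Proposal for Theorem \ref{th:main3}}

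My plan is to upgrade the generalized fractal exponential attractor of Theorem~\ref{th:main2} to a proper one by invoking the version of Theorem~\ref{expattract*} in which $\widetilde{\mathbf{H}}$ is replaced by $\mathbf{H}$ itself (see \cite[Theorem 7.9.9]{springer}). The quasi-stability estimate \eqref{qs*} on the absorbing ball $\mathscr{B}$ is already in hand from Section~\ref{techstuff} and is insensitive to the size of $k$, so the only remaining task is to establish \emph{Lipschitz (or H\"{o}lder) continuity of $t\mapsto T_t(x)$ in the state-space topology $\mathbf{H}$} uniformly for $x\in\mathscr{B}$. This is precisely where strong damping will enter: in Theorem~\ref{th:main2} continuity was only obtained in $\widetilde{\mathbf{H}}$ because $u_{tt}$ was only bounded in $H^{-2}(\Omega)$. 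For continuity in $\mathbf{H}$ itself we instead need $u_t\in L^\infty(\mathbb{R}_+;H^2_0(\Omega))$ and $u_{tt}\in L^\infty(\mathbb{R}_+;L^2(\Omega))$, uniformly on trajectories emanating from $\mathscr{B}$.

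To obtain this regularity, I would formally differentiate \eqref{reducedplate} in time with $v = u_t$, producing
\[
v_{tt}+\Delta^2 v+(k+1) v_t + \partial_u f_B(u)[v] = -Uv_x + \partial_t q(u^t),
\]
where $\partial_u f_B(u)[v] = [b_1 - b_2\|\nabla u\|^2]\Delta v - 2b_2 \langle \nabla u, \nabla v\rangle \Delta u$. Carrying out the plate-level energy identity for $v$ (multiplying by $v_t$) and using the uniform bounds on $\|\Delta u\|$ and $\|u_t\|$ supplied by the absorbing ball $\mathscr{B}$ (Section~\ref{ball}), the Berger derivative terms can be estimated by $\epsilon(\|\Delta v\|^2+\|v_t\|^2)+C(\mathscr{B},\epsilon)$ via interpolation. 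For the time derivative of the delay potential, the $H^{-1}$ bound \eqref{qnegest4} gives
\[
|\langle \partial_t q(u^t),v_t\rangle| \le C\big(\|u(t)\|_1+\|u(t-t^*)\|_1+\textstyle\int_{-t^*}^0\|\Delta u(t+\tau)\|d\tau\big)\|v_t\|_1,
\]
and by boosting $\|v_t\|_1 \leq \epsilon\|\Delta v\|+C_\epsilon \|v_t\|$ one lands at an inequality of the form
\[
\tfrac{d}{dt}\!\big[\tfrac12(\|v_t\|^2+\|\Delta v\|^2)\big]+\big[(k+1)-C(\mathscr{B})\big]\|v_t\|^2\le \epsilon\|\Delta v\|^2+M(\mathscr{B}).
\]
Choosing $k > k_*:=C(\mathscr{B})$ and closing with an appropriate Lyapunov functional (adding a small multiple of $\langle v_t,v\rangle$ to recover an $\|\Delta v\|^2$ coercive term) produces an absorbing estimate for $(v,v_t)$ in $H^2_0(\Omega)\times L^2(\Omega)$.

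With $\|u_t(t)\|_2+\|u_{tt}(t)\|\le C(\mathscr{B})$ uniformly for large $t$, the Lipschitz bounds
\[
\|u(t)-u(s)\|_2\le C|t-s|,\qquad \|u_t(t)-u_t(s)\|\le C|t-s|,\qquad \textstyle\int_{-t^*}^0\|u(t+\tau)-u(s+\tau)\|_2^2 d\tau\le C|t-s|^2
\]
all follow by integrating in $t$, yielding $\|T_t(x)-T_s(x)\|_{\mathbf{H}}\le C|t-s|$ on $\mathscr{B}$. Combined with the quasi-stability estimate \eqref{qs*}, the proper version of Theorem~\ref{expattract*} applies with $\widetilde{\mathbf{H}}=\mathbf{H}$, and delivers a fractal exponential attractor $\mathbf{A}_{\mathrm{exp}}\subset\mathbf{H}$ of finite dimension in $\mathbf{H}$.

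The principal obstacle I anticipate is the \emph{propagation of the extra time-regularity from the attractor to the absorbing ball}: the energy argument for $v=u_t$ formally requires $u_t(0)\in H_0^2(\Omega)$ and $u_{tt}(0)\in L^2(\Omega)$, which generic $x_0\in\mathscr{B}$ do not provide. I would handle this by a Galerkin/density scheme (establishing the bound for smooth approximants in $\mathscr{D}(\mathbf{T})$ and passing to the limit using the Lipschitz dependence \eqref{dynsysest}), or alternatively by exploiting that $\mathscr{B}$ eventually flows into an arbitrarily small neighborhood of the compact attractor $\mathbf{A}$, where Theorem~\ref{maintheorem} guarantees $u_t\in L^\infty(H_0^2)$ and $u_{tt}\in L^\infty(L^2)$; the $v$-energy estimate then propagates these bounds forward throughout the neighborhood. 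Quantifying precisely how large $k_*$ must be (and ensuring independence from initial data, as stated in Remark~\ref{kmin}) is the delicate computational step.
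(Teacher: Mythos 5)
You have correctly isolated the missing ingredient relative to Theorem \ref{th:main2}---time-H\"{o}lder continuity of $t\mapsto T_t x$ in the state topology $\mathbf H$---and your formal differentiated-in-time energy estimate, closed with $k>k_*$, is indeed part of the toolkit for producing $u_t\in L^\infty(H_0^2(\Omega))$, $u_{tt}\in L^\infty(L^2(\Omega))$. But the argument as structured has a genuine gap, and it is exactly the one you flag at the end without resolving. Theorem \ref{expattract*} with $\widetilde{\mathbf H}=\mathbf H$ requires the H\"{o}lder bound for \emph{every} $x$ in the absorbing set $\mathscr B$, and for generic finite-energy data this is simply false: the delayed plate system is hyperbolic-like with no smoothing, so $\|u_t(t)-u_t(s)\|_{L^2}$ admits no uniform $|t-s|^{\alpha}$ bound unless the data already lie in a higher-order space. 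Neither of your proposed repairs closes this. The Galerkin/density route fails because the time-Lipschitz constant you would obtain for smooth approximants depends on their $H^4\times H^2_0$-type norms, which are not uniform over $\mathscr B$, and the Lipschitz estimate \eqref{dynsysest} only controls differences at fixed $t$, not the modulus of continuity in $t$. The ``flow into a neighborhood of $\bA$'' route fails twice over: a $\mathbf H$-neighborhood of $\bA$ still contains rough points (regularity holds on $\bA$ itself, not nearby), and---more fundamentally---the attraction of $\mathscr B$ to $\bA$ is not known to be at an exponential rate, whereas the very definition of an exponential attractor requires exponential attraction of bounded sets.

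The paper does not prove Theorem \ref{th:main3} by applying Theorem \ref{expattract*} on $\mathscr B$; it invokes the criterion of Chueshov from \cite{delay}, adapting the argument of \cite{HLW}, whose mechanism is different in precisely the place your proposal is incomplete. With $k>k_*$ (using the dissipativity of the full system and finiteness of the dissipation integral \eqref{dissint}), one decomposes trajectories into an exponentially decaying part and a part bounded in a higher topology, thereby producing a \emph{smooth bounded set that attracts $\mathscr B$ at an exponential rate}. On that smooth set the quasi-stability estimate and the time-Lipschitz property in $\mathbf H$ (your differentiated-equation bounds) are both legitimate, so an exponential attractor of finite dimension in $\mathbf H$ can be built there; one then concludes attraction of all bounded sets via the transitivity of exponential attraction of \cite{fgmz}. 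In short: your energy estimate supplies the smoothness, but without the decomposition producing an exponentially attracting smooth set and the transitivity step, the passage from ``regular trajectories'' to ``fractal exponential attractor in $\mathbf H$ for data in $\mathscr B$'' does not go through.
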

The improvement uses the recent criterion by Chueshov in \cite{delay}, which itself makes use of the transitivity of exponential attraction described in \cite{fgmz}. The proof of the above theorem is a simple adaptation of the argument found in \cite{HLW}.
	
	The next known result concerns the entire flow-plate system, \eqref{flowplate}. A major hurdle in the long-time behavior analysis of the system is the transference of stability properties of the plate back to the (hyperbolic) flow through the Neumann mapping; at present, when $\alpha=0$, this is only possible in the subsonic case $U<1$, when the flow equation is truly hyperbolic (a perturbed wave equation). In this scenario, we have a ``good" energy relation, and the presence of damping with the energy relation provide {\em finiteness of the dissipation integral}. In this case, sufficiently large damping $k>k_*$ is enough provide convergence to equilibrium for the entire flow-plate trajectories. From a physical point of view, this says that flutter is excluded as an end behavior when the flow is subsonic. This is a well-known phenomenon to aeroelasticians: subsonic panels do not flutter. 

To state this result precisely, we present the stationary problem associated to \eqref{flowplate}, of the form:
\begin{equation}\label{static}
\begin{cases}
\Delta^2u+f_B(u)=p_0(\xb)+Ur_{\Omega}tr[\partial_x \phi]& \xb \in \Omega\\
u=\Dn u= 0 & \xb \in \Gamma\\
\Delta \phi -U^2 \partial_x^2\phi=0 & \xb \in \realsthree_+\\
\Dz \phi = U\partial_x u_{\text{ext}}  & \xb \in \partial \realsthree_+
\end{cases}
\end{equation}
The following theorem is shown for subsonic flows (this is given as \cite[Theorem 6.5.10]{springer}):
\begin{theorem}\label{statictheorem}
Suppose $0 \le U <1$, $ k \geq 0$, with $p_0 \in L^2(\Omega)$. Then {\em weak} solutions $\left(u(\xb),\phi(\xb)\right)$ to \eqref{static} exist and satisfy the additional regularity property $$(u,\phi) \in (H^4\cap H_0^2)(\Omega) \times W^2(\realsthree_+).$$
\end{theorem}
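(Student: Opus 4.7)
\medskip
\noindent\textbf{Proof proposal.} The plan is to exploit the fact that for $0\le U<1$ the flow operator $\mathcal{L}\phi\equiv \Delta\phi-U^2\partial_x^2\phi = (1-U^2)\partial_x^2\phi+\partial_y^2\phi+\partial_z^2\phi$ is uniformly elliptic with constant $1-U^2$. I would first ``eliminate'' $\phi$ and reduce \eqref{static} to a single nonlinear elliptic equation for $u$. Specifically, observe that for $u\in H_0^2(\Omega)$ the double clamped boundary conditions force $\nabla u|_\Gamma = 0$, so $u_x\in H_0^1(\Omega)$ and the Neumann datum $(Uu_x)_{\text{ext}}\in H^1(\mathbb R^2)$. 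The bilinear form $a(\phi,\psi)=\int_{\mathbb R^3_+}[\nabla\phi\cdot\nabla\psi-U^2\partial_x\phi\,\partial_x\psi]$ is coercive on $W^1(\mathbb R^3_+)$ precisely because $U<1$, so Lax--Milgram produces a unique weak solution $\phi=\Phi(u)\in W^1(\mathbb R^3_+)$ of the flow Neumann problem, and the map $\Phi:H_0^2(\Omega)\to W^1(\mathbb R^3_+)$ is linear and bounded.

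With $\Phi$ in hand, weak solutions of \eqref{static} are the critical points of the reduced functional
\[
\tilde J(u)\,=\,\tfrac{1}{2}\|\Delta u\|^2+\Pi(u)+\tfrac{U}{2}\langle tr[\Phi(u)],u_x\rangle_{\Omega}
\]
on $H_0^2(\Omega)$, where $\Pi$ is the Berger potential from \eqref{Pi}. Using Green's identity and the Neumann condition satisfied by $\Phi(u)$, the interaction term is seen to equal $-\tfrac{1}{2}\int_{\mathbb R^3_+}[|\nabla\Phi(u)|^2-U^2|\partial_x\Phi(u)|^2]\le 0$ but with absolute value bounded by $C_U\|u\|_2^2$. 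I would establish existence by the direct method: coercivity of $\tilde J$ comes from the superlinear (quartic) term $\Pi_*(u)=\frac{b_2}{4}\|\nabla u\|^4$ combined with Lemma \ref{l:epsilon}, which absorbs both the pre-stress $b_1$-term, the loading $\langle p_0,u\rangle$, and the negative quadratic contribution from the interaction, leaving a coercive remainder $\gtrsim \|\Delta u\|^2$. Weak lower semicontinuity is immediate for the convex pieces; for the interaction term, the compact embedding $H_0^2(\Omega)\hookrightarrow H^1(\Omega)$ gives strong convergence of $u_x$ in $L^2(\Omega)$ along a minimizing sequence, while $u\mapsto tr[\Phi(u)]\in L^2(\Omega)$ is continuous, so the product converges. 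Hence a minimizer $u\in H_0^2(\Omega)$ exists, and $(u,\Phi(u))$ is the desired weak solution.

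The regularity claim is obtained by a straightforward bootstrap. From $u\in H_0^2(\Omega)$ with $u_x\in H_0^1(\Omega)$, the Neumann datum $Uu_x$ lies in $H^1(\mathbb R^2)$; elliptic Neumann regularity for $\mathcal L$ on the half-space yields $\phi=\Phi(u)\in W^2(\mathbb R^3_+)$, so in particular $tr[\partial_x\phi]\in H^{1/2}(\mathbb R^2)\subset L^2(\Omega)$. The Berger bound in Theorem \ref{nonest} gives $\|f_B(u)\|\le C(\|u\|_2)$, hence the right-hand side of the plate equation in \eqref{static} lies in $L^2(\Omega)$, and clamped-biharmonic regularity (as already invoked at the end of Section \ref{proofs} for the attractor) promotes $u$ to $H^4(\Omega)\cap H_0^2(\Omega)$.

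The main obstacle I expect is verifying coercivity of $\tilde J$ cleanly: the interaction term is a non-trivial nonlocal quadratic form in $u$ whose sign is only known after using the flow equation, and one must make sure the $C_U\|u\|_2^2$ bound can be hidden in the quartic Berger term uniformly in $U\in[0,1)$ rather than absorbed with a $U$-dependent smallness. A secondary subtlety is that the coercivity constant from Lax--Milgram degenerates like $1-U^2\to 0$, so the mapping $\Phi$ loses boundedness as $U\nearrow 1$; this is exactly why the hypothesis $U<1$ is essential and why the supersonic case must be excluded here. Once these uniform-in-data bounds are secured, the variational argument and the subsequent elliptic bootstrap are routine.
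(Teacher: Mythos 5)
The paper does not actually prove Theorem \ref{statictheorem}: it is imported verbatim from Chueshov--Lasiecka \cite[Theorem 6.5.10]{springer}. Your proposal---eliminate $\phi$ by Lax--Milgram for the coercive form $a(\phi,\psi)=\int_{\realsthree_+}[\nabla\phi\cdot\nabla\psi-U^2\partial_x\phi\,\partial_x\psi]$ on $W^1(\realsthree_+)$, minimize the reduced functional on $H_0^2(\Omega)$, then bootstrap with half-space Neumann regularity and clamped biharmonic regularity---is essentially the standard argument behind the cited result, and the variational structure checks out: by symmetry of $a$, the derivative of $u\mapsto\tfrac U2\langle tr[\Phi(u)],u_x\rangle_\Omega$ is exactly $U\langle tr[\Phi(u)],v_x\rangle_\Omega=-U\langle tr[\partial_x\Phi(u)],v\rangle_\Omega$, so critical points of $\tilde J$ are weak solutions of \eqref{static} with $\phi=\Phi(u)$.

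The one point you should tighten is the bound on the interaction term, which also dissolves the ``main obstacle'' you raise. Testing the flow equation with $\psi=\Phi(u)$ gives $\tfrac U2\langle tr[\Phi(u)],u_x\rangle_\Omega=-\tfrac12 a(\Phi(u),\Phi(u))$, and combining the Lax--Milgram bound $\|\nabla\Phi(u)\|\le C\,U(1-U^2)^{-1}\|u_x\|_{L^2(\Omega)}$ with the Hardy/trace estimate (this is precisely Lemma \ref{EIN}) yields $\bigl|\tfrac U2\langle tr[\Phi(u)],u_x\rangle_\Omega\bigr|\le C_U\|u_x\|^2_{L^2(\Omega)}$, i.e.\ a quadratic term at the $H^1(\Omega)$ level only---not the $C_U\|u\|_2^2$ you wrote. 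This distinction matters: a genuine $H^2$-level quadratic term with a large constant could not be absorbed by $\tfrac12\|\Delta u\|^2+\Pi_*(u)$, and Lemma \ref{l:epsilon} only controls norms strictly below $H^2$; but an $H^1$-level quadratic term is swallowed by the quartic $\tfrac{b_2}4\|\nabla u\|^4$ via Young's inequality for each fixed $U\in[0,1)$, with constants allowed to blow up as $U\nearrow1$. No uniformity in $U$ is required by the statement, so coercivity and the direct method go through, and your elliptic bootstrap ($u_x\in H_0^1(\Omega)$ so the Neumann datum is in $H^{1/2}(\mathbb R^2)$, hence $\phi\in W^2(\realsthree_+)$, hence $\Delta^2u\in L^2(\Omega)$ and $u\in H^4\cap H_0^2$) is correct as written.
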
 \noindent
 {\em We denote the set of all stationary solutions (weak solutions to \eqref{static} above) as $\mathcal N$,} that is
 $$\mathcal N \equiv \{(\hat u,\hat \phi) \in H_0^2(\Omega) \times W^1(\realsthree_+): (\hat u,\hat \phi) ~\text{satisfy \eqref{static} variationally}\}.$$

Then we have the following theorem for the entire flow-plate system, when $k$ is large and $0\le U<1$:
\begin{theorem}\label{contoequil}
Let $0\le U<1$ and assume $p_0 \in L^2(\Omega)$ and $b_1 \in \mathbb R$. Assume $y_0=(u_0,u_1;\phi_0,\phi_1) \in Y$. Then there is a minimal damping coefficient $k_{*}>0$ (not depending on the particular solution) so that for $k\ge k_{*}>0$ any generalized solution $(u(t),\phi(t))$ to the system with localized initial flow data (i.e.,   $supp(\phi_0),supp(\phi_1)\subset K_{\rho_0}$ for some $\rho_0>0$) has the property that for any $\rho>0$ \begin{align*}\lim_{t \to \infty} \inf_{(\hat u,\hat \phi) \in \mathcal N}\Big\{\|u(t)-\hat u\|^2_{H^2(\Omega)}+\|u_t(t)\|^2_{L^2(\Omega)}&\\+\|\phi(t)-\hat \phi\|_{H^1( K_{\rho} )}^2+&\|\phi_t(t)\|^2_{L^2( K_{\rho} )} \Big\}=0.\end{align*} \end{theorem}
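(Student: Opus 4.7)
The plan is to bootstrap from the subsonic energy identity \eqref{eident}, which together with $k>0$ and Lemma \ref{energybound} yields the finite dissipation bound
$$
\int_0^\infty \|u_t(\tau)\|_{L^2(\Omega)}^2 \, d\tau \le \frac{1}{k}\bigl[\mathcal E(0)+M\bigr]<\infty,
$$
as recorded in Corollary \ref{globalbound}. Since the flow data are localized in $K_{\rho_0}$, I would next invoke the reduction Theorem \ref{rewrite} to study the orbit $(u(t),u_t(t),u^t)$ under the delay semigroup $(T_t,\bH)$; by Theorem \ref{maintheorem} this orbit is absorbed into the compact global attractor $\bA$, on which one has the a posteriori regularity $u_t\in L^\infty(\mathbb R;H_0^2)$ and $u_{tt}\in L^\infty(\mathbb R;L^2)$ uniformly in time.

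The core step is to show that the $\omega$-limit set of the plate orbit is contained in $\mathcal N_{pl}\times\{0\}\times \mathcal N_{pl}$, where $\mathcal N_{pl}$ denotes the stationary plate profiles associated with solutions of \eqref{static}. The uniform bound on $\|u_{tt}\|_{L^2(\Omega)}$ combined with the finite dissipation integral forces $\|u_t(t)\|_{L^2(\Omega)}\to 0$ via a standard Barbalat-type argument ($\|u_t\|^2$ is uniformly continuous in $t$ on the attractor, and integrable). Every accumulation point of the orbit in $\bH$ is therefore of the form $(\hat u,0,\hat\eta)$. To identify $\hat u$ as stationary and $\hat\eta\equiv\hat u$, I would use the Lyapunov functional $V$ from Section \ref{ball}, which by Lemma \ref{le:48} obeys
$$
\tfrac{d}{dt}V(T_t(x))\le C-c\bigl\{\|u_t\|^2+\|\Delta u\|^2+\Pi_*(u(t-t^*))+\int_{-t^*}^0\Pi_*(u(t+\tau))\,d\tau\bigr\}.
$$
For $k>k_*$ sufficiently large the residual non-gradient contributions from $-Uu_x$ and from the delay potential $q(u^t)$ can be absorbed into the frictional term $k\|u_t\|^2$ (this is precisely where the threshold $k_*$ is fixed), upgrading $V$ to a strict Lyapunov function whose time-derivative vanishes only on the stationary set. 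A LaSalle-type invariance argument on the compact $\omega$-limit then forces $\hat\eta(\tau)\equiv\hat u$, and passage to the limit in the variational form of \eqref{reducedplate} identifies $\hat u$ with a weak solution of the stationary plate equation in \eqref{static} (when $u^t\equiv\hat u$, direct computation from \eqref{potential} and Corollary \ref{calcs} shows that the delay term reproduces exactly the stationary aerodynamic pressure $-Ur_\Omega tr[\partial_x\hat\phi]$ with $\hat\phi$ the steady flow over $\hat u$).

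The flow component is then recovered via the splitting $\phi=\phi^*+\phi^{**}$ from Section \ref{reducedsection}. Huygens' principle gives $\|(\phi^*,\phi^*_t)\|_{Y_{fl,\rho}}\to 0$ for every fixed $\rho$, see \eqref{starstable}. For $\phi^{**}$, the explicit representation \eqref{flowrep} and Corollary \ref{calcs} express $\phi^{**}(t)$ and $\phi_t^{**}(t)$ as continuous functionals of $u^t$ in the $H_0^2\to H^1(K_\rho)\times L^2(K_\rho)$ sense; since $u^t$ converges to the constant profile $\hat u$ in $L^2(-t^*,0;H_0^2(\Omega))$, the time-average structure of \eqref{flowrep} yields $\phi^{**}(t)\to\hat\phi$ and $\phi_t^{**}(t)\to 0$ locally, with $\hat\phi$ uniquely characterized (locally) as the steady flow of \eqref{static} driven by $U\partial_x\hat u_{\mathrm{ext}}$. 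Combining these two pieces with the plate convergence assembles the local convergence claimed in the statement.

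The main obstacle is the quantitative LaSalle step, i.e.\ proving that $V$ is strict for $k>k_*$. The non-gradient structure of $-Uu_x$ and of the delay potential $q(u^t)$ (which has no conservative antiderivative) obstructs monotonicity; the standard remedy is to integrate by parts in time in $\langle q(u^\tau),u_t(\tau)\rangle$ and exploit \eqref{qnegest4}, but this leaves a subcritical remainder whose absorption into $k\|u_t\|^2$ is exactly what sets the threshold $k_*$ and its dependence on $p_0$, $b_1$, $b_2$, $U$, $\rho_0$, $\Omega$ noted in Remark \ref{kmin}. A secondary, technical point is ensuring continuity of \eqref{flowrep} in the reduced limit; this is routine given the $H^4$-regularity of the plate attractor, combined with the trace bound \eqref{trace}.
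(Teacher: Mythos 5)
First, a point of reference: the paper does not prove Theorem \ref{contoequil} at all --- it is stated in Section \ref{finale} among results ``whose proofs are beyond the scope here,'' quoted from \cite{conequil1,conequil2} (cf.\ also \cite[Ch.~6]{springer}). So there is no in-paper proof to compare against; your proposal must stand on its own. Its skeleton (energy identity $\Rightarrow$ finite dissipation integral for $k>0$; reduction to the delayed plate via Theorem \ref{rewrite}; plate convergence first, then flow convergence through the splitting $\phi=\phi^*+\phi^{**}$, Huygens for $\phi^*$, and the explicit representation \eqref{flowrep} for $\phi^{**}$) is indeed the strategy of the cited works. But two steps as you wrote them have genuine gaps.

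First, you claim the orbit is ``absorbed into'' the attractor $\bA$ and thereby enjoys $u_t\in L^\infty(\R;H_0^2)$, $u_{tt}\in L^\infty(\R;L^2)$ uniformly in time; this conflates attraction with membership. A generalized solution is only attracted to $\bA$ in the $\bH$-metric, and the regularity of Theorem \ref{maintheorem} holds for full trajectories \emph{in} $\bA$, not along your orbit. Consequently the Barbalat argument (uniform continuity of $t\mapsto\|u_t(t)\|^2$, which needs $u_{tt}$ bounded in $L^2$, or at least $u_t$ bounded in $H^2$) is not justified for generalized solutions, and the pointwise decay $\|u_t(t)\|\to 0$ --- on which your identification $\hat\eta\equiv\hat u$ also rests --- is not obtained. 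The standard repair is to work along time shifts $t_n\to\infty$: attraction plus quasi-stability give convergence (up to subsequences) to full limiting trajectories, and the vanishing tail of $\int\|u_t\|^2\,dt$ together with lower semicontinuity forces $u_t\equiv 0$ on those limit trajectories; one then passes to the limit in the variational form of \eqref{reducedplate} and upgrades subsequential convergence to convergence to the set $\mathcal N$ by contradiction. Second, the LaSalle step --- which you yourself flag as the main obstacle --- does not work by ``absorbing the residual non-gradient contributions into $k\|u_t\|^2$.'' The terms obstructing monotonicity of $V$ are not velocity-proportional: by \eqref{qnegest2}--\eqref{qnegest4} (and visibly in the constant $C$ of Lemma \ref{le:48}, which carries $p_0$, $b_1$ and the $\|u\|_2$-history), they are lower-order norms of the displacement history, e.g.\ $C_\epsilon\int_{t-t^*}^t\|u(\tau)\|_2^2\,d\tau$, which do not vanish at equilibria and cannot be dominated by enlarging $k$. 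So $V$ cannot be made a strict Lyapunov functional this way, and the threshold $k_*$ does not arise there; in \cite{conequil1,conequil2} the large damping is used instead to dominate the delay/flow perturbation in a decomposition (exponentially decaying plus compact/smooth part) of the non-gradient delayed dynamics, which is what yields convergence of the whole trajectory. A minor further point: in the stationarity identification you should verify the bookkeeping $-U\hat u_x-q(\hat u)=U\,r_\Omega tr[\partial_x\hat\phi]$ (your statement drops the $-U\hat u_x$ contribution and has the opposite sign), though this is a computation, not a conceptual issue. The flow-convergence step for $\phi^{**}$ via \eqref{flowrep} is sound in outline, once the plate convergence has actually been secured.
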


\subsection{Open Questions and Conjectures}
Let us now provide a few conjectures/open questions for further research along the lines in this paper. 
\vskip.1cm
\noindent{\bf Improving the Exponential Attractor Further}: As commented on in the conclusion of \cite{HTW}, it seems that the decomposition presented there is viable for the delay system. Indeed, with large damping and finiteness of the dissipation integral, we make the following conjecture:
	\begin{conjecture}[Fractal Exponential Attractor]
With the same hypotheses as Theorem \ref{maintheorem} and $k>k_*$ (depending on the intrinsic parameters in \eqref{flowplate}) the evolution $(T_t,\mathbf H)$ has a fractal exponential attractor (as in Theorem \ref{th:main3}) and ${\mathbf A}_{\text{exp}}\subset (H^4\cap H_0^2)(\Omega)\times H_0^2(\Omega)\times L^2(-t^*,0;(H^4\cap H_0^2(\Omega)))$---bounded in that topology.
\end{conjecture}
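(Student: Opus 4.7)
The plan is to combine the transitivity-of-exponential-attraction principle of \cite{fgmz} with a smoothing decomposition of $T_t$ in the spirit of \cite{HTW}, under the large-damping hypothesis $k>k_*$. The target is to refine the exponential attractor $\mathbf{A}_{\text{exp}}$ from Theorem \ref{th:main3} to one bounded in the smoother space $\mathbf{H}_1 \equiv (H^4\cap H_0^2)(\Omega)\times H_0^2(\Omega)\times L^2(-t^*,0;(H^4\cap H_0^2)(\Omega))$. The key input is a smoothing property of the semigroup: there exists $\tau>0$ such that $T_\tau$ maps the absorbing ball $\mathscr{B}$ into a set bounded in $\mathbf{H}_1$.

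First I would establish this smoothing via the decomposition $u = v + w$, where $v$ solves the linear damped plate $v_{tt}+\Delta^2 v + (k+1) v_t = 0$ with initial data $(u_0,u_1)$ and zero delay history, and $w$ absorbs the Berger nonlinearity, the delay term, and the non-gradient transport $-Uu_x$. For $k>k_*$, $(v,v_t)$ decays exponentially in $Y_{pl}$. Since $u$ lies in $\mathscr{B}$, the right-hand side of the $w$-equation is uniformly bounded in $L^2(\Omega)$: the Berger term $f_B(u)=[b_1-b_2\|\nabla u\|^2]\Delta u$ by Theorem \ref{nonest}, and $q(u^t)$ by Lemma \ref{pr:q} (specifically \eqref{qnegest2}). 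Bootstrapping through the damped plate, followed by elliptic regularity applied to $\Delta^2 w$, then yields $(w(\tau),w_t(\tau)) \in (H^4\cap H_0^2)\times H_0^2$ uniformly; propagating along $\tau$-shifts produces the history-component bound $w^t \in L^2(-t^*,0;(H^4\cap H_0^2)(\Omega))$. With the smoothing in hand, I would define
\[
\mathbf{A}^{\sharp}_{\text{exp}} \equiv \overline{T_\tau \mathbf{A}_{\text{exp}}}^{\,\mathbf{H}_1}.
\]
Forward invariance follows from $T_s \mathbf{A}^{\sharp}_{\text{exp}} = T_\tau(T_s\mathbf{A}_{\text{exp}}) \subset T_\tau \mathbf{A}_{\text{exp}} \subset \mathbf{A}^{\sharp}_{\text{exp}}$; finite fractal dimension in $\mathbf{H}_1$ follows from the Lipschitz estimate \eqref{dynsysest} together with $\mathbf{H}_1$-boundedness; exponential attraction in the weaker $\mathbf{H}$-topology is inherited directly from $\mathbf{A}_{\text{exp}}$.

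The final step upgrades the attraction from the $\mathbf{H}$-topology to $\mathbf{H}_1$. For any bounded $B \subset \mathbf{H}$, $T_t B$ is exponentially $\mathbf{H}$-attracted by $\mathbf{A}_{\text{exp}}$; re-applying the smoothing decomposition of Step 1 to trajectories originating in $\mathbf{A}_{\text{exp}}$ shows that $\mathbf{A}_{\text{exp}}$ is, in turn, exponentially $\mathbf{H}_1$-attracted by $\mathbf{A}^{\sharp}_{\text{exp}}$. Composing the two rates via the transitivity-of-attraction lemma of \cite{fgmz} yields $\mathrm{dist}_{\mathbf{H}_1}(T_t B, \mathbf{A}^{\sharp}_{\text{exp}})\le Ce^{-\gamma t}$, as required. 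The principal technical obstacle is closing the higher-regularity bootstrap for the history component: while pointwise control of $q(u^t)$ in $L^2(\Omega)$ is immediate from Lemma \ref{pr:q}, tracking the cubic Berger term $\|\nabla u\|^2\Delta u$ and the integrated delay $\partial_t q(u^t)$ uniformly in $H^4$-norms couples multiplicatively to the norm being controlled. Large damping $k>k_*$ and finiteness of the dissipation integral (inherited through Theorem~\ref{rewrite} from the flow-plate energy identity) are the essential inputs expected to close the bootstrap; absent them, the non-gradient structure of \eqref{reducedplate} obstructs the required $\mathbf{H}_1$-regularity lift.
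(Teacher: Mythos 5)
The statement you are treating is presented in the paper as a \emph{conjecture}: the author gives no proof, only the remark that the decomposition of \cite{HTW} together with transitivity of exponential attraction \cite{fgmz} ``seems viable'' for the delay system under large damping. Your proposal follows exactly that suggested route, so there is no paper proof to compare against; the real question is whether your sketch closes the conjecture, and it does not.

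The decisive gap is the smoothing step, and you partially concede it yourself. Boundedness of the forcing $p_0-Uu_x-f_B(u)+q(u^t)$ in $L^2(\Omega)$ does \emph{not} give $(w(\tau),w_t(\tau))\in (H^4\cap H_0^2)(\Omega)\times H_0^2(\Omega)$: for a second-order plate equation an $L^2$-in-space forcing only keeps $w$ in the energy space, and elliptic regularity for $\Delta^2 w$ requires an $L^2$ bound on $w_{tt}$, which is obtained by differentiating the $w$-equation in time and therefore demands uniform-in-time control of $\partial_t[f_B(u)]$ and $\partial_t[q(u^t)]$ along trajectories in $\mathscr{B}$---exactly the multiplicative coupling you flag as ``expected'' to be closed by $k>k_*$ and the finite dissipation integral but never actually close. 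Until that bootstrap is carried out, the set $\mathbf{A}^{\sharp}_{\text{exp}}$ is not even well defined in the higher topology. Moreover, two downstream claims would fail even granting the smoothing: finite fractal dimension in the stronger space does not follow from \eqref{dynsysest} (which is Lipschitz in $\mathbf{H}$ only) together with boundedness in the stronger norm---a set bounded in $H^4$ and finite-dimensional in $L^2$ need not be finite-dimensional in $H^4$; one needs a two-norm smoothing estimate of the form $\|T_\tau y_1-T_\tau y_2\|_{\mathbf{H}_1}\le C\|y_1-y_2\|_{\mathbf{H}}$, or a quasi-stability estimate in the higher topology, neither of which you establish. Similarly, the transitivity lemma of \cite{fgmz} operates in a single fixed metric; composing $\mathbf{H}$-attraction of $B$ to $\mathbf{A}_{\text{exp}}$ with $\mathbf{H}_1$-attraction of $\mathbf{A}_{\text{exp}}$ to $\mathbf{A}^{\sharp}_{\text{exp}}$ mixes topologies and again needs the same unproven smoothing estimate to convert distances. (Note finally that the conjecture asks only for the exponential attractor of Theorem \ref{th:main3} to be \emph{bounded} in the higher topology, so your last step is more than is required---but the required part is precisely the regularity lift you have not established.)
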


\vskip.1cm
\noindent{\bf Determining Functionals for the Entire System}: We note that in the reference \cite{springer} the determining functionals produced for the system $(T_t,\bH)$ are extended to a set of determining functionals for the entire dynamics $(S_t,Y)$, albeit when rotational inertia is present in the plate---\cite[p.690]{springer}. In this case, with $\alpha>0$, the necessary damping is of the form $k(1-\alpha \Delta)u_t$, but the only requirement is that $k>0$. Moreover, the result there is valid for $f(u)$ being the von Karman nonlinearity (as well as others with similar properties, including $f_B(u)$). We point out that in this case, the Neumann (plate to flow-mapping) is {\em compact}, and thus the transference of stability properties of the plate to the flow is more direct and natural. 

With respect to our model, namely with no inertia $\alpha=0$, we speculate that the approaches in \cite{conequil1,conequil2} are amenable here (for $f=f_B$) with $k>k_*$. Thus we believe the following holds:
		\begin{conjecture}
	Let the hypotheses of Theorem \ref{contoequil} be in force---notably, $k>k_*$. Then there exists a set $\mathbb L$ which is a finite determining set for the entire dynamics $(S_t,Y)$. \end{conjecture}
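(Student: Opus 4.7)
The plan is to promote the determining functionals set $\mathscr L$ produced in Theorem \ref{defecttheorem} for the reduced delay dynamics $(T_t,\mathbf H)$ to a determining set $\mathbb L$ for the full flow--plate dynamics $(S_t,Y)$. The only functionals at our disposal act on the plate component, so we set $\tilde l_j(y)\equiv l_j(u)$ for $y=(u,v;\phi,\psi)\in Y$ and take $\mathbb L=\{\tilde l_j\}_{j=1}^N$, where $\mathscr L=\{l_j\}$ is chosen with completeness defect $\varepsilon_{\mathscr L,0}<\varepsilon_*$. The subsonic regime $0\le U<1$ and the smallness condition $k>k_*$ will be used twice: first to inherit a finite dissipation integral via Corollary \ref{globalbound} (which closes the convergence to equilibrium argument of Theorem \ref{contoequil}), and second to allow the plate-to-flow transference via the Neumann mapping.

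Step one: extract plate convergence. Given two trajectories with $|\tilde l_j(S_t y_1)-\tilde l_j(S_t y_2)|\to 0$ for each $j$, apply Theorem \ref{rewrite} to rewrite both (for $t>t^\#$) as images of delay trajectories $T_{t-t^\#}(x_i)$ with $x_i\in\mathbf H$ read off from $(u_i,u_{i,t})$ on a window of length $t^*$ built by the full flow--plate dynamics. Theorem \ref{defecttheorem} then yields $\|T_{t-t^\#}(x_1)-T_{t-t^\#}(x_2)\|_{\mathbf H}\to 0$, which in particular gives the plate convergence $\|(u_1-u_2)(t)\|_2+\|(u_1-u_2)_t(t)\|_0\to 0$.

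Step two: propagate convergence into the flow. The difference $\psi=\phi_1-\phi_2$ satisfies the subsonic perturbed wave equation on $\mathbb R^3_+$ with (non-zero) initial data and Neumann boundary data $h(t)=[(u_1-u_2)_t+U(u_1-u_2)_x]_{\text{ext}}$. Decompose $\psi=\psi^*+\psi^{**}$ as in Section \ref{reducedsection}. The Kirchhoff representation combined with Huygens' principle forces $\psi^*$ into zero in the local flow energy $Y_{fl,\rho}$ for every $\rho>0$ (since the initial flow data are localized). For $\psi^{**}$, Theorem \ref{flowformula} and Corollary \ref{calcs} give an explicit integral representation in terms of $h$; the point is that $h$ is now being driven by a plate difference whose energy decays to zero. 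Using the trace regularity \eqref{trace} together with the microlocal bound $h\in L^2(0,T;H^{1/2})\Rightarrow \psi^{**}\in C([0,T];H^1)$ from \cite{miyatake}, and splitting time into a long ``past'' window (over which dissipation and hyperbolic smoothing take effect) and a short ``present'' window (controlled by uniform bounds on the attractor, i.e., the $u_{tt}$ bound from Theorem \ref{dimsmooth}), one obtains $\|\psi^{**}(t)\|_{Y_{fl,\rho}}\to 0$.

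Step three: the main obstacle. The nontrivial point is exactly the quantitative transfer from $h\to 0$ to $\psi^{**}\to 0$ in local energy, because the flow is hyperbolic and there is no parabolic smoothing to absorb the loss of $1/3$ derivative in the Neumann map noted in Remark \ref{regularity}. My plan is to mirror the argument of \cite{conequil1,conequil2}: integrate $E_{fl}$ against the plate-energy decay, combine with the finite dissipation integral \eqref{dissint} (available here because $k>k_*>0$), and exploit the interaction energy identity to propagate the plate decay through $E_{int}$ into $E_{fl}$. The subsonic hypothesis $U<1$ is essential to even have an energy identity of signed form (Lemma \ref{energybound}), which is why the conjecture is posed with this constraint. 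If that route stalls, an alternative is to apply the existing result Theorem \ref{contoequil} to the two trajectories after a large time $T_0$, noting that Step one already forces both plate components to converge to the same stationary solution $\hat u\in\mathcal N$; then the corresponding stationary flows $\hat\phi$ are determined by $\hat u$ via the steady problem \eqref{static}, forcing flow convergence to the same $\hat\phi$ and completing the argument.
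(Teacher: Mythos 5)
You should first note that the paper does not prove this statement at all: it is posed as a conjecture, with only the remark that the approaches of \cite{conequil1,conequil2} ``are amenable here'' when $k>k_*$. So your proposal has to stand on its own, and as written it does not close the decisive step---the one you yourself flag as the main obstacle. The quantitative transfer from the decaying plate difference to the flow difference $\psi^{**}$ is not achieved by the tools you cite. The Neumann data for the difference is $h=[z_t+Uz_x]_{\text{ext}}$ with $z_t\in L^2(\Omega)$ only (this is exactly the $\alpha=0$ setting), so the implication $h\in L^2(0,T;H^{1/2}(\mathbb R^2))\Rightarrow\psi^{**}\in C([0,T];H^1)$ from \cite{miyatake} is simply not applicable; the best available regularity loses $1/3$ of a derivative (Remark \ref{regularity}), and, as the paper stresses, the $H^{1/3}$-based trace estimates carry constants with uncontrolled $T$-dependence, so your ``long past window / short present window'' splitting does not yield a uniform-in-time bound that tends to zero. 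Moreover, the finite dissipation integral \eqref{dissint} controls $\int_0^\infty\|u_t\|^2\,dt$ for an \emph{individual} trajectory; it gives no integrability for the difference $z_t$ of two trajectories, so it cannot be inserted into a difference-energy identity for $\psi$ the way you suggest. These are precisely the open issues that make the statement a conjecture rather than a theorem.

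Your fallback route has a separate gap. Theorem \ref{contoequil} gives convergence of each trajectory to the \emph{set} $\mathcal N$ of stationary solutions, not to a single equilibrium; knowing in addition that the plate difference tends to zero does not let you conclude that both plate components converge to the same $\hat u$ unless the equilibria are isolated (which requires extra hypotheses, e.g.\ genericity of the load $p_0$, not assumed here---indeed $\mathcal N$ may contain continua for the Berger nonlinearity with $b_1$ large). Without a common limit $\hat u$ you cannot invoke uniqueness of the steady flow in \eqref{static} to force flow convergence. Step one of your plan (projecting the functionals onto the plate component and applying Theorem \ref{defecttheorem} through the reduction of Theorem \ref{rewrite}) is reasonable and is presumably how any eventual proof would begin, but the plate-to-flow transference in the absence of velocity regularization remains unproven, which is exactly why the paper leaves this as an open problem.
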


\vskip.1cm
\noindent{\bf Results for {\em Some} Damping}: While many results in the rotary inertia scenario hold for some damping, i.e., when the principal linear portion of the plate has the form
$$(1-\alpha\Delta)u_{tt}+\Delta^2w+k(1-\alpha\Delta)u_t+f(u)=p(\mathbf x,t),$$
we do not currently see a means of circumventing the need for large damping in the results and conjectures presented above. All results for imposed damping above represent an improvement of existing results through some form of decomposition of the plate dynamics into a smooth and exponentially decaying part---the latter portion unavoidably requires large damping. Physically, we would like results (e.g., subsonic convergence to equilibria or full system determining functionals) to hold for any amount of imposed damping, as there is some structural damping present in every elastic structure. Unfortunately, our methods do not seem to yield this at present. 

\vskip.1cm
\noindent{\bf Von Karman Nonlinearity}: As the Berger nonlinearity is a simplification---a physically accepted one for panels---of the scalar von Karman equations, one might naturally ask if the above results hold when $f(u)=f_V(u)$? The answer is, in general, complicated. We refer the reader to \cite{HLW} where these issues are discussed at length, and comparisons between von Karman and Berger dynamics in the absence of rotational inertia are the main theme. Two succinct comments are in order: 

(i) For von Karman's dynamics with $(k+1)>0$, showing the quasi-stability property is more difficult; in general, it can only be done on the attractor itself, rather than on the absorbing ball. If one assumes the damping is large, i.e., $k>k_*$, then one can obtain the quasi-stability estimate on the absorbing ball $\mathscr B$; this is in contrast to the situation here, where quasi-stability for the Berger system is shown for on the absorbing ball with only the damping coming from the flow (i.e., $k=0$), and its size does not matter. 

(ii) The second comment is that working in higher topologies for $f=f_V$, for instance in trying to construct smooth exponentially attracting sets, is far more difficult than the Berger dynamics. This leads to critical problems in most results presented in this section---see \cite{conequil2} for more discussion.

\vskip.1cm
\noindent{\bf Other Plate Boundary Conditions}: Unfortunately, while other boundary conditions are very interesting to us in the context of this model, the questions of modeling, well-posedness, and stability are very complicated. First, the modeling changes dramatically (i.e., von Karman/Berger) when free boundary conditions are utilized (see \cite{gw}). Secondly,  extension by zero is utilized in many arguments above, and this is not possible for hinged or free boundary conditions. We posit that this could be circumvented using other types of extensions, but the free boundary conditions are quite challenging and open for this type of coupled flow-plate model. In general, for non-homogeneous plate boundary conditions, there has been some work on the Berger flow-plate interaction with boundary damping through moments \cite{weblas}, but this question is also largely open. The survey type references \cite{fereisel,survey1,survey2} provide more detailed discussion, especially with regards to the free boundary condition.

\end{document}